\numberwithin{equation}{section}
\newtheorem{thm}{Theorem}[section]
\newtheorem{Lem}{Lemma}[section]
\newtheorem{defn}{Definition}[section]
\newtheorem{rem}{Remark}[section]
\newtheorem{exam}{Example}[section]
\newtheorem{Corollary}{Corollary}[section]
\begin{document}
\begin{center}
	\LARGE \textbf{Grundy Numbers of Impartial Chocolate Bar Games}
\end{center}

\begin{center}
	\large Ryohei Miyadera \footnote{Kwansei Gakuin, runners@kwansei.ac.jp}, 
	Shunsuke Nakamura \footnote{Osaka University, nakashun1994@gmail.com}
	Yushi Nakaya \footnote{Kwansei Gakuin, math271k@gmail.com}, 
\end{center}

\begin{abstract}
Chocolate bar games are variants of the CHOMP game in which the goal is to leave your opponent with the single bitter part of the chocolate. The original chocolate bar game \cite{robin} consists of a rectangular bar of chocolate with one bitter corner. Since the horizontal and vertical grooves are independent, an $m \times n$ rectangular chocolate bar is equivalent to the game of NIM with a heap of $m-1$ stones and a heap of $n-1$ stones. Since the Grundy number of the game of NIM with a heap of $m-1$ stones and a heap of $n-1$ stones is $(m-1) \oplus (n-1)$, the Grundy number of this $m \times n$ rectangular bar is $(m-1) \oplus (n-1)$.

In this paper, we investigate step chocolate bars whose widths are determined by a fixed function of the horizontal distance from the bitter square. 

When the width of chocolate bar is proportional to the distance from the bitter square and the constant of proportionality is even, the authors have already proved that the Grundy number of this chocolate bar is  $(m-1) \oplus (n-1)$, where $m$ is is the largest width of the chocolate and $n$  is the longest horizontal distance from the bitter part. This result was published in a mathematics journal (Integers, Volume 15, 2015).

On the other hand, if the constant of proportionality is odd, the Grundy number of this chocolate bar is not  $(m-1) \oplus (n-1)$.

Therefore, it is natural to look for a necessary and sufficient condition for chocolate bars to have the Grundy number that is equal to  $(m-1) \oplus (n-1)$, where $m$ is the largest width of the chocolate and $n$  is the longest horizontal distance from the bitter part.

In the first part of the present paper, the authors present this  necessary and sufficient condition.

Next, we modified the condition that the Grundy number that is equal to  $(m-1) \oplus (n-1)$, and we studied a  necessary and sufficient condition for chocolate bars to have Grundy number that is equal to $((m-1) \oplus (n-1+s))-s$, where $m$ is is the largest width of the chocolate and $n$  is the longest horizontal distance from the bitter part. We present this necessary and sufficient condition in the second part of this paper.
\end{abstract}

\section{Introduction}\label{firstsection}
The original chocolate bar game \cite{robin} had a rectangular bar of chocolate with one bitter corner. Each player in turn breaks the bar in a straight line along the grooves and eats the piece he breaks off.  The player who breaks the chocolate bar and eats to leave his opponent with the single bitter block (black block) is the winner. 

The $4 \times 3$ rectangular chocolate bar in Figure \ref{introchoco1} is equivalent to the game of NIM with a heap of 3 stones and a heap of 2 stones. Here, 
3 is the number of grooves above and 2 is the number of grooves to the right of the bitter square. 
Since the Grundy number of the game of NIM with a heap of $3$ stones and a heap of $2$ stones is $3 \oplus 2$, the Grundy number of the  chocolate bar in Figure \ref{introchoco1} is $3 \oplus 2$.

In this paper, we consider step  chocolate bars as in Figures \ref{introchoco2}, \ref{introchoco3}, \ref{introchoco4},\ref{jgamek1},\ref{jgamek3}, \ref{jgamek5} and etc.
In these cases, a vertical break can reduce the number of horizontal breaks. We can still think of the game as being played with heaps but now a move may change more than one heap.

\begin{rem}
If we are to play a non-trivial chocolate game, we have to make the \textit{disjunctive sum} of games by combining chocolates in Figures \ref{introchoco2}, \ref{introchoco3}, \ref{introchoco4},\ref{jgamek1},\ref{jgamek3},\ref{jgamek5} with another chocolate. For the definition of 
\textit{disjunctive sum}, see Definition \ref{sumofgames}.
For example, if we combine the chocolate in Figure \ref{choco2r} with the chocolate in Figure \ref{choco2l}, then we have the chocolate in Figure \ref{choco2rl} that is mathematically the same as the chocolate in  Figure \ref{samechoco}.
The chocolate game in Figure \ref{choco2r} itself is trivial, since the first player wins the game by cutting the brown parts instantly.

On the other hand, we can find a winning stratedy of the chocolate game in Figure \ref{samechoco} by studying the chocolate game  in Figure \ref{choco2r} and the chocolate game in Figure \ref{choco2l} separately.
\end{rem}

\begin{figure}[!htb]
	\begin{minipage}[!htb]{0.45\columnwidth}	
		\centering	
		\includegraphics[width=0.6\columnwidth,bb=0 0 187 93]{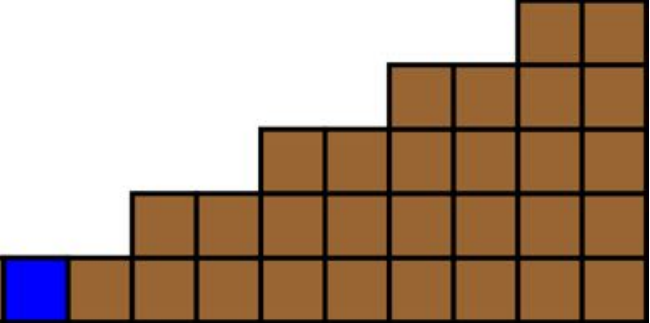}
		\caption{example $(1)$}
		\label{choco2r}
\end{minipage}
	\begin{minipage}[!htb]{0.45\columnwidth}
		\centering	
		\includegraphics[width=0.4\columnwidth,bb=0 0 76 51]{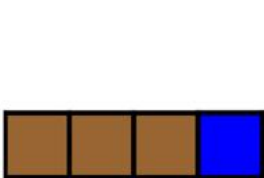}
		\caption{example $(2)$}
		\label{choco2l}
	\end{minipage}
\end{figure}

\begin{figure}[!htb]
	\begin{minipage}[!htb]{0.45\columnwidth}
		\centering	
		\includegraphics[width=0.8\columnwidth,bb=0 0 324 93]{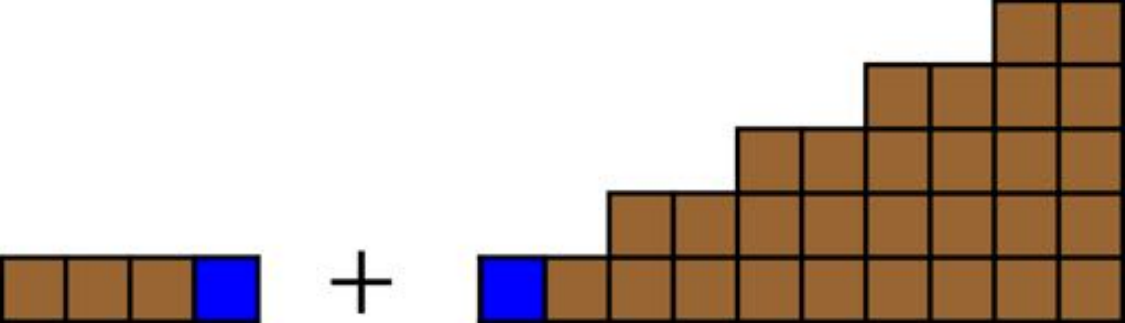}
		\caption{Figure \ref{choco2r} + Figure \ref{choco2l} (1)}
		\label{choco2rl}
	\end{minipage}
	\begin{minipage}[!htb]{0.45\columnwidth}	
		\centering	
		\includegraphics[width=0.8\columnwidth,bb=0 0 242 93]{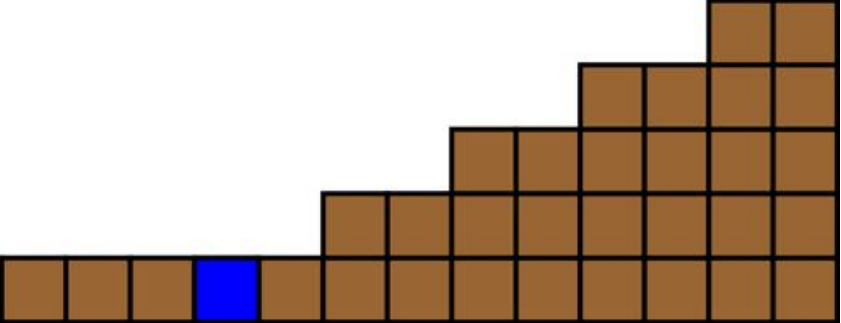}
		\caption{Figure \ref{choco2r} + Figure \ref{choco2l} (2)}
		\label{samechoco}
	\end{minipage}
\end{figure}

Next, we define coordinates \{y,z\} for each chocolate. Let $y=m-1$ and $z=n-1$,  where $m$ is is the largest width of the chocolate and $n$  is the longest horizontal distance from the bitter part. (We use $x$-coodinate later in this paper.)

The $y$-coordinate and the $z$-cooridinate of each step chocolate bar in Figures  \ref{introchoco2}, \ref{introchoco3}, \ref{introchoco4}, \ref{jgamek1},   
 \ref{jgamek3}, \ref{jgamek5} are $\{3,13\}$, $\{4,9\}$, $\{2,9\}$, $\{5,5\}$, $\{3,11\}$ and $\{2,14\}$. Here, the first number is the $y$-coordinate and the second number is the $z$-coordinate of each chocolate. For an example, the $y$-coordinate and the $z$-cooridinate of the chocolate bar in Figure \ref{introchoco2} are $3$ and $13$ respectively.

If we caluculate the Grundy numbers of chocolate bars in Figures  \ref{introchoco2}, \ref{introchoco3} and \ref{introchoco4}, we have $14$=$3 \oplus 13$, $13$=$4 \oplus 9$ and $11$=$2 \oplus 9$. (Here, we omit the calculation of Grundy numbers.)

If we caluculate the Grundy numbers of chocolate bars in Figures \ref{jgamek1}, \ref{jgamek3} and \ref{jgamek5}, then we have $8 \neq 0=5\oplus 5$, $13 \neq 8=3\oplus 11$ and $15 \neq 12=2\oplus 14$. (Here, we omit the calculation of Grundy numbers.)
Now we know that the Grundy number of some step chocolates are $y \oplus z$, where $y$ and $z$ are coordinates of the chocolate, but 
the Grundy number of some step chocolates are not $y \oplus z$.

When the width of chocolate bar is proportional to the distance from the bitter square and the constant of proportionality is even, the authors have already proved that the Grundy number of this chocolate bar is $y \oplus z$, where $y,z$ are the coordinates of the chocolate bar. This result was published in a mathematics journal~\cite{integer2015}. Chocolates in Figures  \ref{introchoco2}, \ref{introchoco3} and \ref{introchoco4} are examples of this type of chocolates.

On the other hand, if the constant of proportionality is odd, the Grundy number of this chocolate bar is not $y \oplus z$. Chocolates in Figures \ref{jgamek1}, \ref{jgamek3} and \ref{jgamek5} are examples of this type of chocolates.

Therefore it is natural to look for a necessary and sufficient condition for chocolate bars to have the Grundy number that is equal to  $y \oplus z$, where $y$ and $z$ are coordinates of the chocolate.

There are other types of chocolate bar  games, and one of the most well known is CHOMP. CHOMP is a game with a rectangular chocolate bar. The players take turns, and they choose one block and eat it together with those that are below it and to its right. The top left block is bitter and the players cannot eat this block. Although many people have studied this game, the winning strategy is yet to be discovered. For an overview of research of CHOMP, see  \cite{zeilberger}.

\begin{figure}[!htb]
	\begin{minipage}[!htb]{0.45\columnwidth}
		\centering	
		\includegraphics[width=0.2\columnwidth,bb=0 0 44 59]{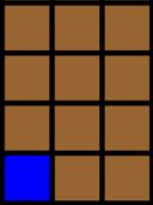}
		\caption{$\{3,2\}$ Grundy number is 1=$3 \oplus 2$.}
		\label{introchoco1}
	\end{minipage}
	\begin{minipage}[!htb]{0.45\columnwidth}	
		\centering	
		\includegraphics[width=0.8\columnwidth,bb=0 0 203 58]{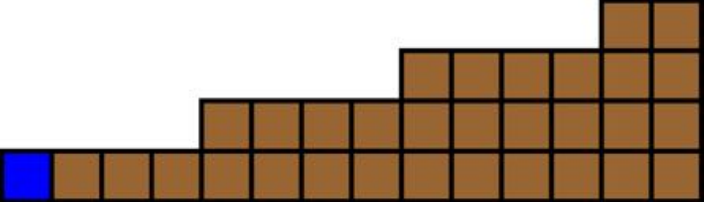}
		\caption{$\{3,13\}$ Grundy number is 14=$3 \oplus 13$.}
		\label{introchoco2}
	\end{minipage}
\end{figure}

\begin{figure}[!htb]
	\begin{minipage}[!htb]{0.45\columnwidth}
		\centering	
		\includegraphics[width=0.45\columnwidth,bb=0 0 261 133]{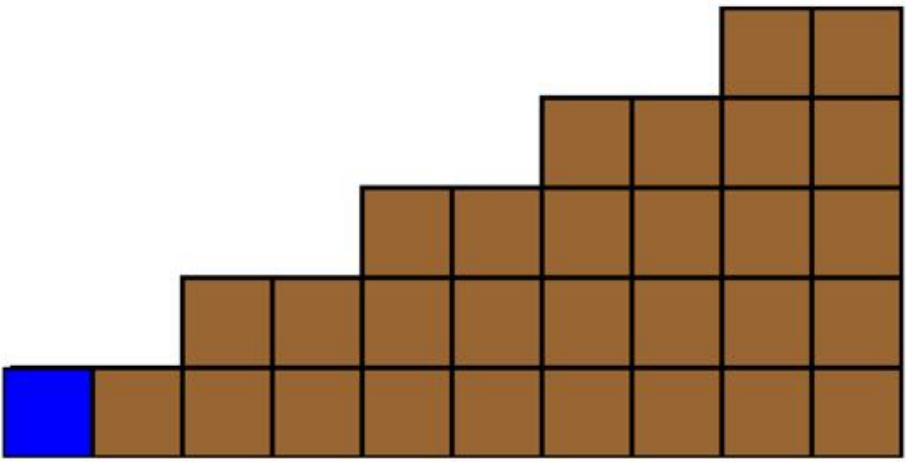}
		\caption{$\{4,9\}$ Grundy number is 13=$4 \oplus 9$.}
		\label{introchoco3}
	\end{minipage}
	\begin{minipage}[!htb]{0.45\columnwidth}	
		\centering	
		\includegraphics[width=0.45\columnwidth,bb=0 0 261 79]{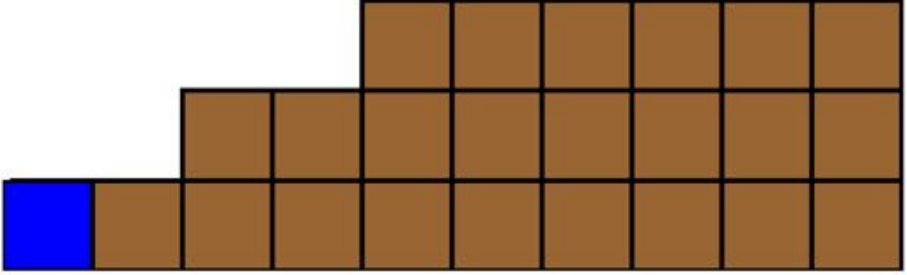}
		\caption{$\{2,9\}$  Grundy number is 11=$2 \oplus 9$.}
		\label{introchoco4}
	\end{minipage}
\end{figure}

\begin{figure}[!htb]
	\begin{minipage}[!htb]{0.32\columnwidth}		
		\centering	
		\includegraphics[width=0.45\columnwidth,bb=0 0 260 260]{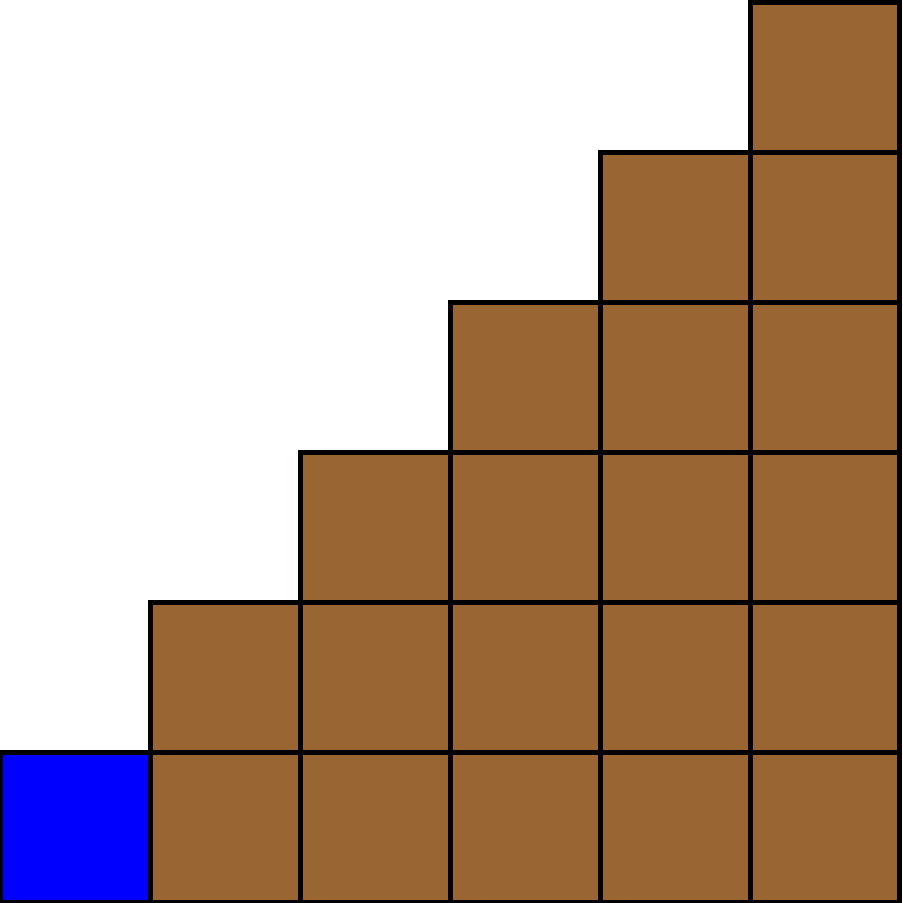}
		\caption{$\{5,5\}$ Grundy number is $8 \neq 0=5\oplus 5$.}
		\label{jgamek1}
	\end{minipage}
	\begin{minipage}[!htb]{0.32\columnwidth}
		\centering	
		\includegraphics[width=0.6\columnwidth,bb=0 0 260 87]{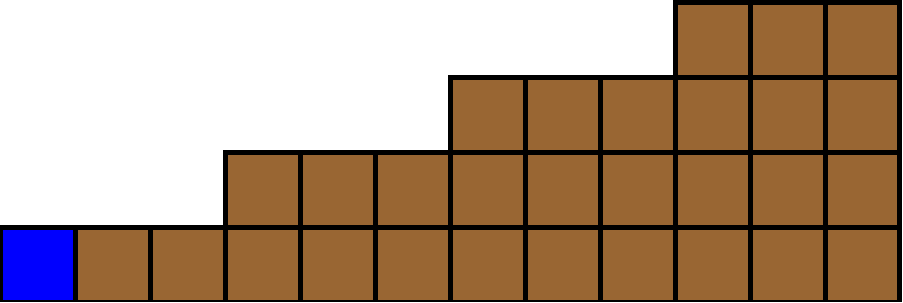}
		\caption{$\{3,11\}$ Grundy number is $13 \neq 8=3\oplus 11$.}
		\label{jgamek3}
	\end{minipage}
	\begin{minipage}[!htb]{0.32\columnwidth}
		\centering	
		\includegraphics[width=0.7\columnwidth,bb=0 0 260 52]{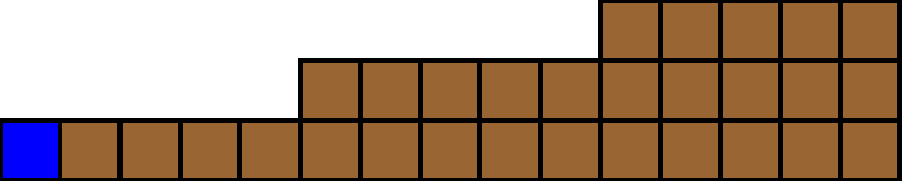}
		\caption{$\{2,14\}$ Grundy number is $15 \neq 12=2\oplus 14$.}
		\label{jgamek5}
	\end{minipage}
\end{figure}

Let $Z_{\geq0}$ be the set of non-negative integers. 

For completeness, we briefly review some necessary concepts in combinatorial game theory; see  $\cite{lesson}$ or $\cite{combysiegel}$ for more details. 

\begin{defn}\label{definitionfonimsum11}
	Let $x$, $y$ be non-negative integers, and write them in base 2, so 
	that $x = \sum_{i=0}^n x_i 2^i$ and $y = \sum_{i=0}^n y_i 2^i$ with $x_i,y_i \in \{0,1\}$.
  We  define the nim-sum $x \oplus y$ by
 \begin{equation}
x \oplus y = \sum\limits_{i = 0}^n {{w_i}} {2^i},
\end{equation}
where $w_{i}=x_{i}+y_{i} \ (mod\ 2)$.
 \end{defn}
 
Since chocolate bar games are impartial games without draws there will only be two outcome classes.
\begin{defn}\label{NPpositions}
$(a)$ $\mathcal{N}$-positions, from which the next player can force a win, as long as he plays correctly at every stage.\\
$(b)$ $\mathcal{P}$-positions, from which the previous player (the player who will play after the next player) can force a win, as long as he plays correctly at every stage.
\end{defn}

\begin{defn}\label{sumofgames}
The \textit{disjunctive sum} of two games, denoted $\mathbf{G}+\mathbf{H}$, is a super-game where a player may move either in $\mathbf{G}$ or in $\mathbf{H}$, but not both.
\end{defn}

\begin{defn}\label{defofmove}
For any position $\mathbf{p}$ of a game $\mathbf{G}$, there is a set of positions that can be reached by making precisely one move in $\mathbf{G}$, which we will denote by \textit{move}$(\mathbf{p})$. 
\end{defn}

\begin{rem}
As to the examples of \textit{move}, please see Example \ref{examofmove11}.
\end{rem}

\begin{defn}\label{defofmexgrundy}
$(i)$ The \textit{minimum excluded value} ($\textit{mex}$) of a set, $S$, of non-negative integers is the least non-negative integer which is not in S. \\
$(ii)$  Each position $\mathbf{p}$ of a impartial game has an associated Grundy number, and we denoted it by $G(\mathbf{p})$.\\
Grundy number is found recursively: 
$G(\mathbf{p}) = \textit{mex}\{G(\mathbf{h}): \mathbf{h} \in move(\mathbf{p})\}.$
\end{defn}

We present some lemmas for \textit{minimum excluded value} $mex$.
\begin{Lem}\label{defofmex1}
Let $x \in Z_{\geq 0}$ and $y_k \in Z_{\geq 0}$ for $ k=1,2,...,n$. Then  Condition $(i)$ is true if and only if Condition $(ii)$ is true.\\
$(i)$   $x = \textit{mex}(\{y_k,k=1,2,...,n\})$. \\
$(ii)$   $x \neq y_k$ for any $k$ and for any $u \in Z_{\geq 0}$ such that $u < x$ there exists $k$ such that
$u = y_k$.
\end{Lem}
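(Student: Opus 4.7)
The plan is to prove the equivalence directly from Definition \ref{defofmexgrundy}$(i)$, which characterizes $\textit{mex}(S)$ as the least non-negative integer not belonging to $S$. Since both conditions refer only to membership facts about the set $S = \{y_k : k=1,2,\dots,n\}$ and to the usual ordering on $\mathbb{Z}_{\geq 0}$, no combinatorial machinery is needed; the argument is essentially an unpacking of the definition.

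For the direction $(i) \Rightarrow (ii)$, I would assume $x = \textit{mex}(S)$. By the definition of $\textit{mex}$, $x \notin S$, which immediately gives $x \neq y_k$ for every $k$. For the second clause, I would fix an arbitrary $u \in \mathbb{Z}_{\geq 0}$ with $u < x$; since $x$ is the \emph{least} non-negative integer outside $S$, the strictly smaller value $u$ cannot lie outside $S$, so $u \in S$, i.e.\ $u = y_k$ for some index $k$.

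For the converse $(ii) \Rightarrow (i)$, I would check that $x$ satisfies the two requirements that uniquely determine $\textit{mex}(S)$: namely that $x \notin S$ and that every strictly smaller non-negative integer lies in $S$. The first is exactly the first clause of $(ii)$, and the second is exactly the second clause. Hence $x$ is the least non-negative integer absent from $S$, so $x = \textit{mex}(S)$.

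The argument is routine, and I do not anticipate any genuine obstacle; the only minor care needed is to handle the boundary case $x = 0$ in $(ii) \Rightarrow (i)$, where the second clause is vacuous (there is no $u \in \mathbb{Z}_{\geq 0}$ with $u < 0$) but the conclusion $0 = \textit{mex}(S)$ still follows from the first clause alone, since $0 \notin S$ forces $\textit{mex}(S) = 0$.
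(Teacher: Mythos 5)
Your argument is correct and is exactly the unpacking of Definition \ref{defofmexgrundy} that the paper itself invokes (the paper's proof is the one-line remark that the lemma is direct from the definition of \textit{mex}). Both directions and the vacuous case $x=0$ are handled properly, so there is nothing to add.
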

\begin{proof}
This is direct from Definition \ref{defofmexgrundy} (the definition of $mex$).
\end{proof}

\begin{Lem}\label{defofmex2}
Let $S$ be a set and $\{1,2,3,...,m-1\} \subset S$. Then $\textit{mex}(S) \geq m$.
\end{Lem}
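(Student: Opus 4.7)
The plan is to derive the bound directly from the definition of \emph{mex} (Definition \ref{defofmexgrundy}), together with the characterisation already recorded in Lemma \ref{defofmex1}. The claim is essentially a restatement of the definition, so the argument should take only a few lines and involves no case analysis.

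First, I would set $u = \textit{mex}(S)$ and argue by contradiction, assuming $u < m$. By Lemma \ref{defofmex1} applied to (any enumeration of) $S$, the integer $u$ satisfies $u \notin S$. On the other hand, the hypothesis $\{1,2,\ldots,m-1\} \subset S$ (read together with the convention $0 \in S$, so that the characterisation of $\textit{mex}$ is consistent with $\textit{mex}(S)\ge m$) places every non-negative integer strictly less than $m$ inside $S$. In particular $u \in \{0,1,\ldots,m-1\} \subseteq S$, contradicting $u \notin S$. Hence $\textit{mex}(S) \geq m$, as required.

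I do not anticipate any real obstacle: the step that is doing the work is simply the observation that $\textit{mex}(S)$ is, by definition, the least non-negative integer missing from $S$, so the presence of every element of $\{0,1,\ldots,m-1\}$ in $S$ immediately forces $\textit{mex}(S)$ to be at least $m$. The only minor care needed is to invoke Lemma \ref{defofmex1} rather than re-expanding the definition, so that the proof fits smoothly into the paper's existing framework and can later be cited uniformly in the Grundy-number computations that follow.
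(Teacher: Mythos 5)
Your proof is correct and takes essentially the same route as the paper, which simply unfolds the definition of $\textit{mex}$: since every non-negative integer below $m$ lies in $S$, the least excluded value must be at least $m$. You are also right to flag the role of $0$ --- as literally printed the hypothesis $\{1,2,\ldots,m-1\}\subset S$ does not suffice (take $S=\{1,2,\ldots,m-1\}$, for which $\textit{mex}(S)=0$), so the intended hypothesis is $\{0,1,\ldots,m-1\}\subset S$, which is exactly what holds in the paper's application of this lemma in Lemma \ref{lemmagrundyminuss}.
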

\begin{proof}
	This is direct from the definition of $mex$ in Definition \ref{defofmexgrundy}.
\end{proof}

The power of the Sprague-Grundy theory for impartial games is contained in the next result.

\begin{thm}\label{theoremofsumg}
Let $\mathbf{G}$ and $\mathbf{H}$ be impartial games, and let $G_{\mathbf{G}}$ and $G_{\mathbf{H}}$ be Grundy numbers of $\mathbf{G}$ and $\mathbf{H}$  respectively. Then we have the following:\\
$(i)$ For any position $\mathbf{g}$ of $\mathbf{G}$ we have
$G_{\mathbf{G}}(\mathbf{g})=0$ if and only if $\mathbf{g}$ is a $\mathcal{P}$-position.\\
$(ii)$ The Grundy number of a position $\{\mathbf{g},\mathbf{h}\}$ in the game $\mathbf{G}+\mathbf{H}$ is
$G_{\mathbf{G}}(\mathbf{g})\oplus G_{\mathbf{H}}(\mathbf{h})$.
\end{thm}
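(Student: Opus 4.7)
The plan is to prove both parts by well-founded induction on positions, which is valid because impartial games without draws terminate in finitely many moves.

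For part $(i)$, I would handle the base case of a terminal position $\mathbf{g}$, where $\textit{move}(\mathbf{g}) = \emptyset$, so $G_{\mathbf{G}}(\mathbf{g}) = \textit{mex}(\emptyset) = 0$ and $\mathbf{g}$ is trivially a $\mathcal{P}$-position. For the inductive step, if $G_{\mathbf{G}}(\mathbf{g}) = 0$, then by the definition of $\textit{mex}$ no successor has Grundy value $0$, so by the inductive hypothesis every successor is an $\mathcal{N}$-position, forcing $\mathbf{g}$ to be a $\mathcal{P}$-position. Conversely, if $G_{\mathbf{G}}(\mathbf{g}) > 0$, then $0$ lies in the set whose $\textit{mex}$ is computed, so some successor has Grundy value $0$ and is a $\mathcal{P}$-position by the inductive hypothesis; moving to it witnesses that $\mathbf{g}$ is an $\mathcal{N}$-position.

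For part $(ii)$, I would induct on the pair $(\mathbf{g},\mathbf{h})$. Writing $a = G_{\mathbf{G}}(\mathbf{g})$, $b = G_{\mathbf{H}}(\mathbf{h})$, and $c = a \oplus b$, Lemma \ref{defofmex1} reduces the task to two verifications: (a) $c$ is not the Grundy value of any successor of $(\mathbf{g},\mathbf{h})$ in $\mathbf{G}+\mathbf{H}$, and (b) every $u < c$ is attained as such a Grundy value. Any move in $\mathbf{G}+\mathbf{H}$ alters exactly one component, producing either $(\mathbf{g}',\mathbf{h})$ with $\mathbf{g}' \in \textit{move}(\mathbf{g})$ or $(\mathbf{g},\mathbf{h}')$ with $\mathbf{h}' \in \textit{move}(\mathbf{h})$; by the inductive hypothesis these successors have Grundy values $G_{\mathbf{G}}(\mathbf{g}') \oplus b$ and $a \oplus G_{\mathbf{H}}(\mathbf{h}')$, respectively. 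For (a), if $G_{\mathbf{G}}(\mathbf{g}') \oplus b = a \oplus b$ then $G_{\mathbf{G}}(\mathbf{g}') = a$, contradicting the fact that $a$ is the $\textit{mex}$ of the Grundy values of $\textit{move}(\mathbf{g})$; a symmetric argument rules out moves in $\mathbf{H}$. For (b), given $u < c$, I would let $k$ be the highest bit position where $u$ and $c$ differ; then bit $k$ of $c$ equals $1$, so it is set in exactly one of $a$ or $b$, say in $a$ by symmetry. Setting $v = u \oplus b$, a comparison of $v$ with $a$ at bit $k$ shows $v < a$, so by Lemma \ref{defofmex1} there exists $\mathbf{g}' \in \textit{move}(\mathbf{g})$ with $G_{\mathbf{G}}(\mathbf{g}') = v$, and the successor $(\mathbf{g}',\mathbf{h})$ has Grundy value $v \oplus b = u$.

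The main obstacle is the bit-manipulation step in (b) — verifying that $v = u \oplus b$ satisfies $v < a$. This is the crux of the Sprague--Grundy theorem and where the arithmetic of the nim-sum is genuinely used; everything else is routine bookkeeping with $\textit{mex}$ and the inductive hypothesis.
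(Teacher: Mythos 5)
Your proposal is correct, and it supplies an actual argument where the paper gives none: the paper simply states Theorem \ref{theoremofsumg} and refers the reader to \cite{lesson} for a proof. What you have written is the standard textbook proof of the Sprague--Grundy theorem, and all of its steps go through. In particular, the one step you flag as the crux does work: with $k$ the highest bit at which $u$ and $c=a\oplus b$ differ, the inequality $u<c$ forces $c_k=1$ and $u_k=0$; since $v\oplus a=u\oplus c$ vanishes above bit $k$ and equals $1$ at bit $k$, the number $v=u\oplus b$ agrees with $a$ above bit $k$ and has $v_k=0<1=a_k$, so $v<a$, and Lemma \ref{defofmex1} then yields a successor $\mathbf{g}'$ with $G_{\mathbf{G}}(\mathbf{g}')=v$ and hence a successor of $\{\mathbf{g},\mathbf{h}\}$ with Grundy value $v\oplus b=u$. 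Two small points worth making explicit if you write this up in full: the well-foundedness of the induction needs the (standard, here implicit) hypothesis that every play terminates, and Lemma \ref{defofmex1} as stated in the paper concerns finite lists $y_1,\dots,y_n$, so you should either note that $move(\mathbf{p})$ is finite for the games considered or appeal directly to the definition of $\textit{mex}$ in Definition \ref{defofmexgrundy}.
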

For a proof of this theorem, see $\cite{lesson}$.

\section{Grundy Numbers of chocolate bar}
In this paper we study Grundy numbers of chocolate bar.
For a general bar, the strategies seem complicated. We focus on bars that grow regularly in height. 
\begin{defn}\label{definitionoffunctionf}
Let $f$ be a function that satisfies the following two conditions:\\
$(i)$  $f(t)\in Z_{\geq0}$ for $t \in Z_{\geq0}$.\\
$(ii)$  $f$ is monotonically increasing,i.e.,
we have $f(u) \leq f(v)$ for $u,v \in Z_{\geq0}$ with $u \leq v$.
 \end{defn}

\begin{defn}\label{defofbarwithfunc}
Let $f$ be the function that satisfies the conditions in Definition \ref{definitionoffunctionf}.\\ For $y,z \in Z_{\geq0}$ 
the chocolate bar will consist of $z+1$ columns where the 0th column is the bitter square and the height of the $i$-th column is
$t(i) = \min (f(i),y) +1$ for i = 0,1,...,z. We will denote this by $CB(f,y,z)$.\\
Thus the height of the $i$-th column is determined by the value of $\min (f(i),y) +1$ that is determined by $f$, $i$ and $y$.
\end{defn}

\begin{exam}\label{exampofcbars}
Here are examples of chocolate bar games $CB(f,y,z)$. 

\begin{figure}[!htb]
	\begin{minipage}[!htb]{0.32\columnwidth}
		\centering
		\includegraphics[width=0.8\columnwidth,,bb=0 0 203 58]{referee1.pdf}
		\caption{$CB(f,3,13)$ $f(t)$  $= \lfloor \frac{t}{4}\rfloor$.}
		\label{referee1p}
	\end{minipage}
	\begin{minipage}[!htb]{0.32\columnwidth}
		\centering
		\includegraphics[width=0.8\columnwidth,bb=0 0 261 133]{k2right.pdf}
		\caption{$CB(f,4,9)$ $f(t)$  $= \lfloor \frac{t}{2}\rfloor$.}
		\label{referee2p}
	\end{minipage}
	\begin{minipage}[!htb]{0.32\columnwidth}
		\centering
		\includegraphics[width=0.8\columnwidth,bb=0 0 261 79]{k2rightb.pdf}
		\caption{$CB(f,2,9)$ $f(t)$  $= \lfloor \frac{t}{2}\rfloor$.}
		\label{referee3p}
	\end{minipage}
\end{figure}

\begin{figure}[!htb]
	\centering
	\includegraphics[width=0.6\columnwidth,bb= 0 0 1238 350]{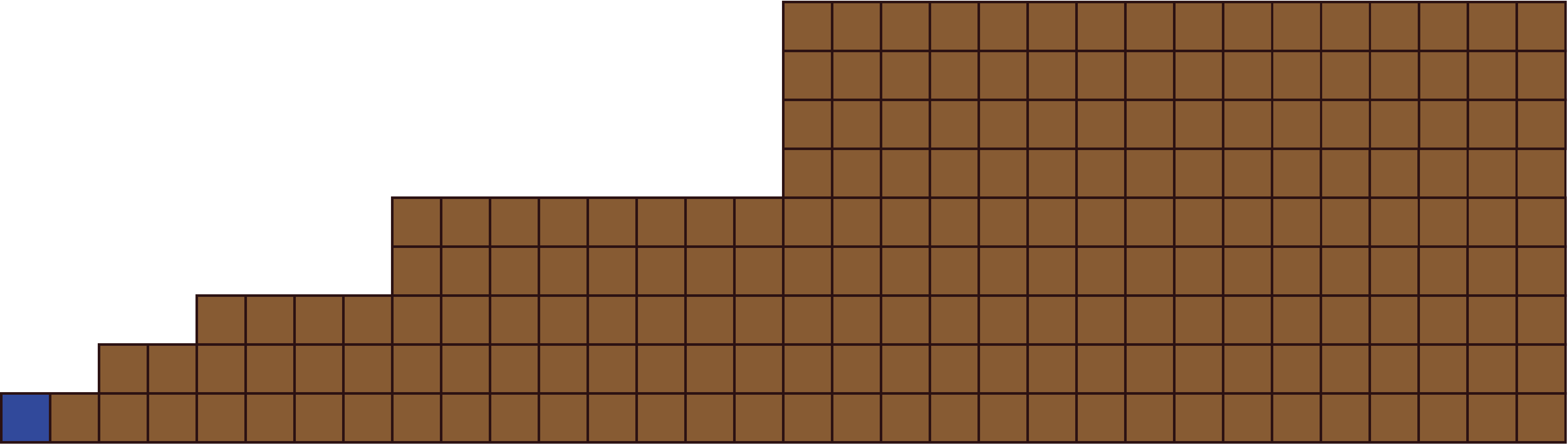}
	\label{yzk4h2graph}
	\caption{$CB(f,8,31)$ $f(0)=f(1)=0$ and $f(t)$  $=2^ {\lfloor log_2t \rfloor -1}$ for $t > 1$.}
\end{figure}

\end{exam}

For a fixed function $f$,  we denote the position of $CB(f,y,z)$ by coordinates $\{y,z\}$ without mentioning $f$.

\begin{exam}\label{examplecoordinates}
Here, we present four examples of coordinates of positions of chocolate bars  when $f(t)$  $= \lfloor \frac{t}{2}\rfloor$.

\begin{figure}[!htb]
	\begin{minipage}[!htb]{0.45\columnwidth}
		\centering
		\includegraphics[width=0.5\columnwidth,bb=0 0 196 98]{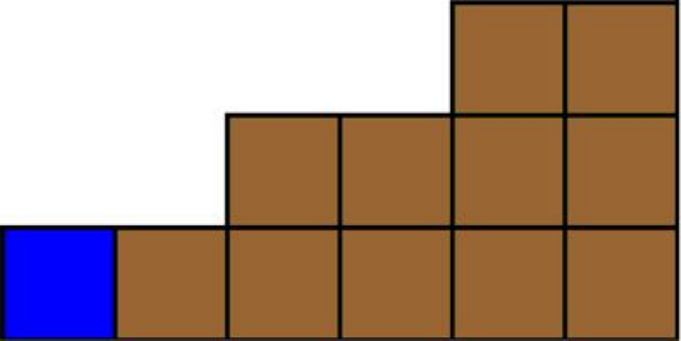}
		\caption{$CB(f,2,5)$ $\{2,5\}$}
		\label{2yzchoco} 
	\end{minipage}
	\begin{minipage}[!htb]{0.45\columnwidth}
		\centering
		\includegraphics[width=0.5\columnwidth,bb=0 0 174 87]{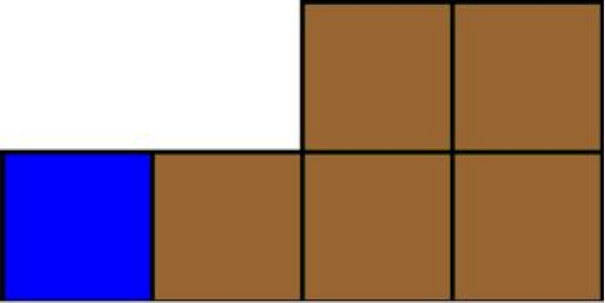}
		\caption{$CB(f,1,3)$ $\{1,3\}$}
		\label{minex3}
	\end{minipage}
\end{figure}

\begin{figure}[!htb]
	\begin{minipage}[!htb]{0.45\columnwidth}
		\centering
		\includegraphics[width=0.45\columnwidth,bb=0 0 196 33]{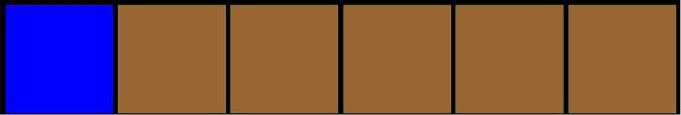}
		\caption{$CB(f,0,5)$ $\{0,5\}$}
		\label{minex1v2}
\end{minipage}
	\begin{minipage}[!htb]{0.45\columnwidth}
		\centering
		\includegraphics[width=0.45\columnwidth,bb=0 0 260 87]{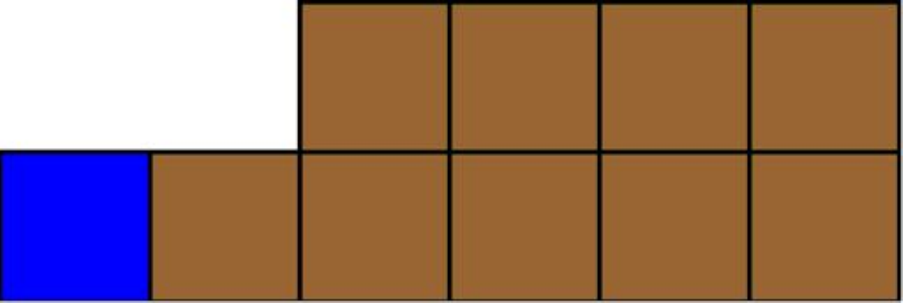}
		\caption{$CB(f,1,5)$ $\{1,5\}$}
		\label{minex1v2b}
	\end{minipage}
\end{figure}
\end{exam}

For a fixed function $f$, we define $move_f$ for each position $\{y,z\}$ of the chocolate bar $CB(f,y,z)$. This $move_f$ is a special case of $move$ defined in Definition \ref{defofmove}.
\begin{defn}\label{moveofchocoh}
	For $y,z \in Z_{\ge 0}$ we define \\
	$move_f(\{y,z\})=\{\{v,z \}:v<y \} \cup  \{ \{\min(y, f(w)),w \}:w<z \}$, where $v,w \in Z_{\ge 0}$.
\end{defn}

\begin{exam}\label{examofmove11}
Here, we explain about move when $f(t)$  $= \lfloor \frac{t}{2}\rfloor$.
 If we start with the position $\{y,z\}=\{2,5\}$ and reduce $z=5$ to $z=3$, then the y-coordinate (the first coordinate)  will be $\min(2, \lfloor 3/2 \rfloor )=\min(2,1)=1$.\\ Therefore we have $\{1,3\} \in move_f(\{ 2,5 \})$. It is easy to see that 
$\{1,5\}, \{0,5\} \in move_f(\{2,5\})$, $\{1,3\} \in move_f(\{1,5\})$ and $\{0,5\} \notin move_f(\{1,3\})$. 
\end{exam}

\section{A Chocolate Game Bar $CB(f,y,z)$ whose Grundy Number is $G(\{y,z\})=y \oplus z$}\label{thirdsection}
\subsection{A Sufficient Condition for a Chocolate Bar $CB(f,y,z)$ to have the Grundy Number $G(\{y,z\})=y \oplus z$}\label{subsectionforsufficondi}
In this subsection we study a sufficient condition for a chocolate bar $CB(f,y,z)$ to have a Grundy number $G(\{y,z\}) = y \oplus z$. 

In our proofs, it will be useful to have the disjunctive sum of a chocolate bar $CB(f,y,z)$ to the right of the bitter square and a single strip of chocolate bar  to the left, as in Figures  \ref{choco20151}, \ref{choco20152}, \ref{choco20153},  \ref{2yzchoco1}, \ref{demochocolate3} and \ref{demochocolate4}. We will denote such a position by $\{x,y,z\}$, where $x$ is the length of the single strip of chocolate bar and $y,z$ are coordinates of $CB(f,y,z)$. Figures \ref{2yzchoco1}, \ref{demochocolate3} and \ref{demochocolate4} give some examples of the coordinate system. 

For the disjunctive sum of the chocolate bar  game with $CB(f,y,z)$ to the right of the bitter square and a single strip of chocolate bar  to the left, we will show that the $\mathcal{P}$-positions are when $x\oplus y\oplus z=0$, so that the Grundy number of the chocolate bar $CB(f,y,z)$ is $x$ $ =y\oplus z$. 

\begin{exam}
	Examples of coordinates of chocolate bar games.

\begin{figure}[!htb]
	\begin{minipage}[!htb]{0.45\columnwidth}
		\centering
		\includegraphics[width=0.7\columnwidth,bb=0 0 260 122]{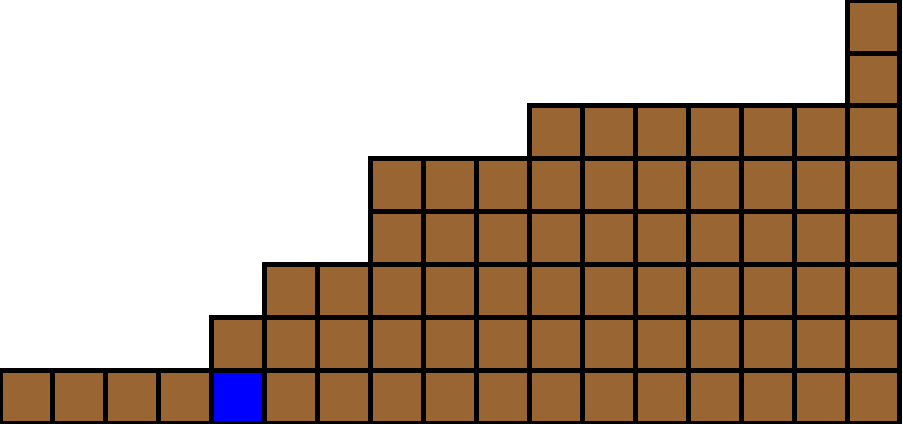}
		\caption{$\{4,7,12\}$}
		\label{choco20151}
	\end{minipage}
	\begin{minipage}[!htb]{0.45\columnwidth}
		\centering
		\includegraphics[width=0.7\columnwidth,bb=0 0 260 112]{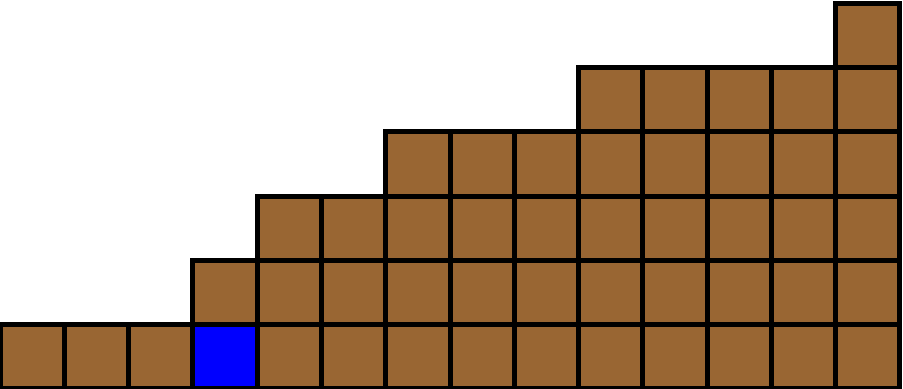}
		\caption{$\{3,5,10\}$}
		\label{choco20152}
	\end{minipage}
	\end{figure}
	
\begin{figure}[!htb]
	\begin{minipage}[!htb]{0.45\columnwidth}
		\centering
		\includegraphics[width=0.5\columnwidth,bb=0 0 260 186]{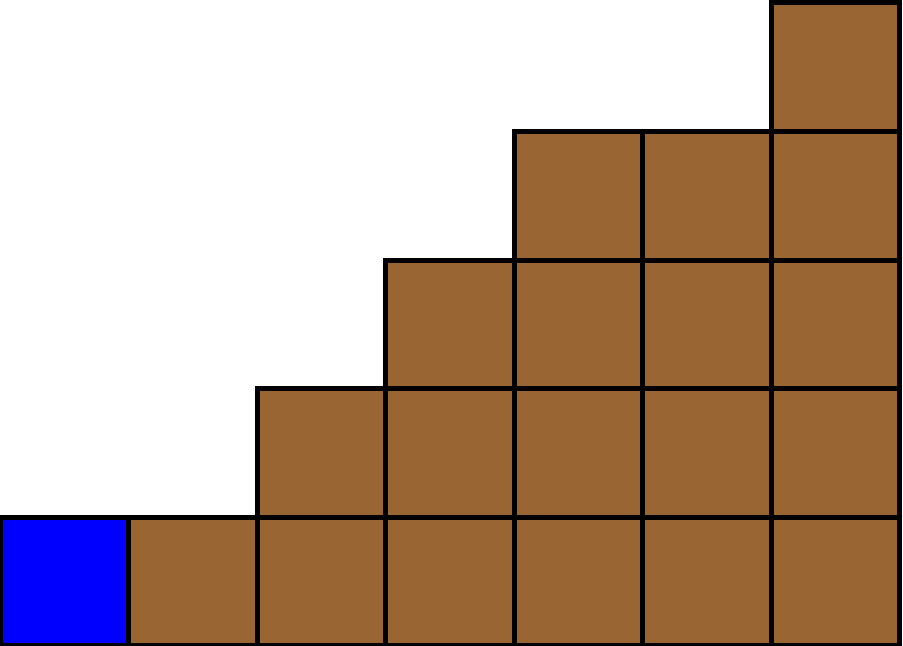}
		\caption{$\{0,4,6\}$}
		\label{choco20153}
	\end{minipage}
	\begin{minipage}[!htb]{0.45\columnwidth}
		\centering
		\includegraphics[width=0.7\columnwidth,bb=0 0 242 93]{demochocol2.pdf}
		\caption{$\{3,4,9\}$}
		\label{2yzchoco1}
	\end{minipage}	
\end{figure}
	
\begin{figure}[!htb]
	\begin{minipage}[!htb]{0.45\columnwidth}
		\centering
		\includegraphics[width=0.8\columnwidth,bb=0 0 260 58]{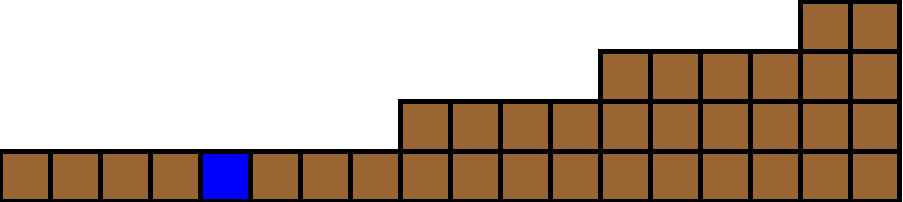}
		\caption{$\{4,3,13\}$}
		\label{demochocolate3}
	\end{minipage}
	\begin{minipage}[!htb]{0.45\columnwidth}
		\centering
		\includegraphics[width=0.8\columnwidth,bb=0 0 529 132]{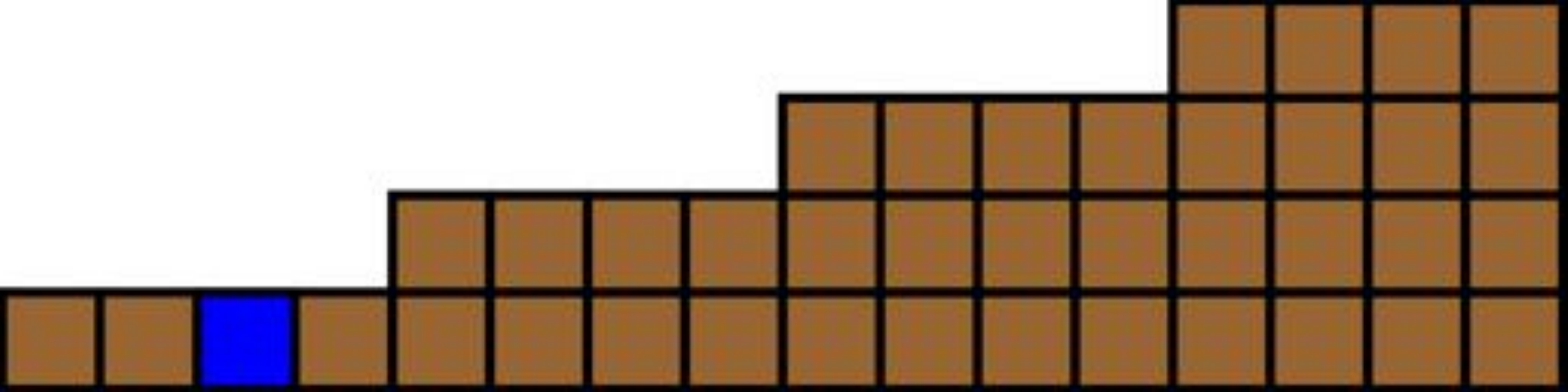}
		\caption{$\{2,3,13\}$}
		\label{demochocolate4}
	\end{minipage}
\end{figure}

\end{exam}

 $move_h (\{x, y, z\})$ is the set that contains all the positions that can be reached from the position $\{x, y, z\}$ in one step (directly).

\begin{defn}\label{moveforthreecordinates}
	For $x,y,z \in Z_{\ge 0}$, we define \\
	$move_h(\{x,y,z\})=\{\{u,y,z \}:u<x \} \cup \{\{x,v,z \}:v<y \} \cup \{ \{x,\min(y, h(w) ),w \}:w<z \}$, where $u,v,w \in Z_{\ge 0}$.
\end{defn}

The following condition $(a)$ in Definition \ref{adefoffunch} is a sufficient condition for a chocolate bar $CB(f,y,z)$ to have a Grundy number $G(\{y,z\}) = y \oplus z$. 

\begin{defn}\label{adefoffunch}
Let $h$ be a function of $Z_{\geq0}$ into $Z_{\geq0}$ that satisfies the conditions of Definition \ref{definitionoffunctionf} and the following condition $(a)$.\\
$(a)$ Suppose that 
\begin{equation}\label{aaconditionforh}
\lfloor \frac{z}{2^i}\rfloor = \lfloor \frac{z^{\prime}}{2^i}\rfloor
\end{equation}
for some $z,z^{\prime} \in Z_{\geq 0}$ and some natural number $i$.
Then we have 
\begin{equation}\label{aaconditionforh2}
\lfloor \frac{h(z)}{2^{i-1}}\rfloor = \lfloor \frac{h(z^{\prime})}{2^{i-1}}\rfloor.
\end{equation}
\end{defn}

\begin{rem}\label{aadefoffunch}
The condition $(a)$ of Definition \ref{adefoffunch} is equivalent to the following condition $(b)$.\\
$(b)$ Suppose that $z_k,y_k \in \{0,1\}$ for $k = 0,1,...,n$ and 
\begin{equation}\label{conditionforh}
h(\sum\limits_{k = 0}^n {{z_k}} {2^k})=\sum\limits_{k = 0}^n {{y_k}} {2^k}.\nonumber
\end{equation}
Let $i$ be a natural number. Then for any $z_k^{\prime} \in \{0,1\}$ for $k=0,...,i-1$ there exist $y_k^{\prime} \in \{0,1\}$ for $k=0,...,i-2$ such that 

\begin{equation}\label{conditionforh2}
h(\sum\limits_{k = i}^n {{z_k}} {2^k} + \sum\limits_{k = 0}^{i-1} {{z_k^{\prime}}} {2^k}) = \sum\limits_{k = i-1}^n {{y_k}} {2^k} + \sum\limits_{k = 0}^{i-2} {{y_k^{\prime}}} {2^k}.\nonumber
\end{equation}
\end{rem}

The condition $(a)$ of Definition \ref{adefoffunch} is very abstract, so 
we present some examples of functions that satisfy condition $(a)$ of Definition \ref{adefoffunch} in Lemma \ref{lemmaforfloorzbyk} and Lemma \ref{lemmafor2overlog}.
\begin{Lem}\label{lemmaforfloorzbyk}
Let $h(z)=\lfloor \frac{z}{2k}\rfloor$ for some natural number $k$. Then 
$h(z)$ satisfies the condition of Definition \ref{adefoffunch}.
\end{Lem}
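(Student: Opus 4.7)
The plan is to reduce condition $(a)$ of Definition \ref{adefoffunch} to the elementary nested-floor identity $\lfloor \lfloor a/b \rfloor / c \rfloor = \lfloor a/(bc) \rfloor$, which holds for any positive integers $b,c$ and any $a \in Z_{\geq 0}$. Conditions $(i)$ and $(ii)$ of Definition \ref{definitionoffunctionf} are immediate for $h(z)=\lfloor z/(2k)\rfloor$: the function takes values in $Z_{\geq 0}$ and is monotonically increasing. So the whole task is to verify the implication in $(a)$.

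Starting from the hypothesis $\lfloor z/2^i\rfloor = \lfloor z'/2^i\rfloor$ for some $z,z' \in Z_{\geq 0}$ and some natural number $i$, I would apply the nested-floor identity twice in succession to rewrite
\[
\left\lfloor \frac{h(z)}{2^{i-1}}\right\rfloor
= \left\lfloor \frac{\lfloor z/(2k)\rfloor}{2^{i-1}}\right\rfloor
= \left\lfloor \frac{z}{k\cdot 2^{i}}\right\rfloor
= \left\lfloor \frac{\lfloor z/2^i\rfloor}{k}\right\rfloor,
\]
and the identical chain of equalities holds with $z$ replaced by $z'$. The assumption now equates the rightmost expressions for $z$ and $z'$, which is exactly the desired conclusion \eqref{aaconditionforh2}.

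There is essentially no obstacle here. The entire content of the lemma is the factorization $2k\cdot 2^{i-1} = k\cdot 2^i = 2^i\cdot k$ together with one standard fact about nested floors. The only small point worth mentioning is that $i\geq 1$, which is exactly what is meant by "natural number" in Definition \ref{adefoffunch}; this ensures that $2^{i-1}$ is a positive integer and that both applications of the nested-floor identity are legitimate. Consequently the proof is essentially a one-line computation.
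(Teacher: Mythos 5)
Your proof is correct, and it takes a genuinely different and more direct route than the paper. The paper argues by contraposition: assuming $\lfloor h(z)/2^{i-1}\rfloor \neq \lfloor h(z')/2^{i-1}\rfloor$, it unwinds the floors through a chain of explicit inequalities (bounding $z$ between $2k(u2^{i-1}+2^{i-1}-1)+2k-1$ and $2k(u+1)2^{i-1}$, then dividing by $2^i$) to conclude $\lfloor z/2^i\rfloor < \lfloor z'/2^i\rfloor$. You instead prove the implication directly by observing that $\lfloor h(z)/2^{i-1}\rfloor$ is a function of $\lfloor z/2^i\rfloor$ alone, via the double application of the nested-floor identity $\lfloor\lfloor a/b\rfloor/c\rfloor=\lfloor a/(bc)\rfloor$ and the factorization $2k\cdot 2^{i-1}=2^i\cdot k$. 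This collapses the whole argument into one standard identity and avoids the case bookkeeping; the paper's version has the minor virtue of being self-contained (it never invokes the nested-floor identity, effectively reproving a special case of it inline) and of exhibiting the quantitative gap between the two floors, but your computation is cleaner and equally rigorous. The only thing you should make explicit if writing this up is a one-line justification (or citation) of the nested-floor identity itself; your remark that $i\geq 1$ guarantees $2^{i-1}$ is a positive integer correctly handles the boundary case.
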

\begin{proof} 
We prove the contraposition of the condition of Definition \ref{adefoffunch}. We suppose that Equation (\ref{aaconditionforh2}) is false. Then there exist $u \in Z_{\geq 0}$    
 and a natural number $i$ such that 
\begin{equation}\label{falseofformer}
\lfloor \frac{\lfloor \frac{z}{2k}  \rfloor}{2^{i-1}}\rfloor = u < u+1 \leq  \lfloor \frac{\lfloor \frac{z^{\prime}}{2k}  \rfloor}{2^{i-1}}\rfloor.
\end{equation}
We prove that Equation (\ref{aaconditionforh}) is false.
From  the inequality in  (\ref{falseofformer}), we have 
\begin{equation}
\lfloor \frac{z}{2k}  \rfloor \leq u2^{i-1}+2^{i-1}-1 < (u+1)2^{i-1} \leq  \lfloor \frac{z^{\prime}}{2k}  \rfloor,\nonumber 
\end{equation}
and hence 
\begin{equation}\label{multplefy2k}
z \leq  2k(u2^{i-1}+2^{i-1}-1)+2k-1 < 2k(u+1)2^{i-1} \leq  z^{\prime}.
\end{equation}
From the inequality in  (\ref{multplefy2k}), we have 
\begin{equation}
\frac{z}{2^i} \leq  k(u+1)-\frac{1}{2^i} < k(u+1) \leq  \frac{z^{\prime}}{2^i}.\nonumber 
\end{equation}
Therefore we have
\begin{equation}
\lfloor \frac{z}{2^i}  \rfloor < k(u+1) \leq  \lfloor \frac{z^{\prime}}{2^i}  \rfloor.\nonumber 
\end{equation}
This shows that Equation (\ref{aaconditionforh}) is false. Therefore, we have completed the proof of this lemma.
\end{proof}

\begin{Lem}\label{lemmafor2overlog}
	Let $h(0) = h(1)=0$ and $h(z)=2^{\lfloor log_2z\rfloor-1}$ for $z \in Z_{\geq 0}$ such that $z \geq 2$. Then 
	$h(z)$ satisfies the condition of Definition \ref{adefoffunch}.
\end{Lem}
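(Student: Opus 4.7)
The plan is to exploit the fact that $h(z)$ depends only on the position of the highest set bit of $z$ (for $z \ge 2$), so the key quantity controlling $h$ is $\lfloor \log_2 z \rfloor$. I would verify the condition of Definition \ref{adefoffunch} directly by splitting on whether the common quotient $q := \lfloor z/2^i \rfloor = \lfloor z'/2^i \rfloor$ is zero or positive, since these two regimes behave qualitatively differently for this $h$.

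In the case $q \ge 1$, both $z$ and $z'$ lie in the interval $[2^i q,\, 2^i(q+1))$, so their top bits are identical from bit $i$ upward. I would show that this forces $\lfloor \log_2 z \rfloor = \lfloor \log_2 z' \rfloor$ (namely, equal to $i + \lfloor \log_2 q \rfloor$, which is at least $i \ge 1$, so in particular $z, z' \ge 2$), and hence $h(z) = h(z')$ outright. The conclusion $\lfloor h(z)/2^{i-1} \rfloor = \lfloor h(z')/2^{i-1} \rfloor$ is then immediate.

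In the case $q = 0$, both $z$ and $z'$ are strictly less than $2^i$. I would then observe that for any such value $z$, either $z \in \{0,1\}$ and $h(z) = 0$, or $z \ge 2$ and $\lfloor \log_2 z \rfloor \le i-1$, which gives $h(z) = 2^{\lfloor \log_2 z \rfloor - 1} \le 2^{i-2} < 2^{i-1}$. In either sub-case $h(z) < 2^{i-1}$, and the same bound holds for $z'$, so $\lfloor h(z)/2^{i-1} \rfloor = 0 = \lfloor h(z')/2^{i-1} \rfloor$, which finishes this case.

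The only subtlety I anticipate is the boundary $i = 1$, where $2^{i-1} = 1$ and the condition forces $h(z) = h(z')$ exactly; but the case analysis above handles this cleanly since $q = 0$ then confines $z, z'$ to $\{0,1\}$ where $h$ is constantly zero. Everything else reduces to elementary manipulations of floors and powers of two, so no additional machinery beyond the explicit formula for $h$ is needed.
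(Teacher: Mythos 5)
Your proof is correct, and it takes a genuinely different route from the paper's. The paper argues by contraposition: it assumes $\lfloor h(z)/2^{i-1}\rfloor < \lfloor h(z')/2^{i-1}\rfloor$, writes $z=2^{n+m}$ and $z'=2^{n'+m'}$ with $0\le m,m'<1$, extracts $n'\ge i$ and $n+1\le n'$ from the floor inequality, and concludes $\lfloor z/2^i\rfloor<\lfloor z'/2^i\rfloor$. You instead prove the implication directly, splitting on whether the common quotient $q=\lfloor z/2^i\rfloor$ is positive or zero: when $q\ge 1$ both numbers share the same top bit position $i+\lfloor\log_2 q\rfloor\ge i\ge 1$, so $h(z)=h(z')$ exactly; when $q=0$ both values of $h$ are bounded above by $2^{i-2}$ (or are $0$), so both sides of the conclusion vanish. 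Your version is arguably cleaner on two counts: it isolates the real content (that $h$ depends only on $\lfloor\log_2 z\rfloor$, which is determined by the bits from position $i$ upward whenever $q\ge 1$), and it explicitly handles the exceptional inputs $z\in\{0,1\}$ where $h$ is defined by fiat rather than by the formula $2^{\lfloor\log_2 z\rfloor-1}$ --- a case the paper's contrapositive argument silently glosses over when it substitutes the closed form into inequality (\ref{falseofflog1}). The paper's approach has the mild advantage of matching the template of the preceding Lemma \ref{lemmaforfloorzbyk} (also proved by contraposition), but loses nothing mathematically. No gaps.
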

\begin{proof} 
	We prove the contraposition of the condition of Definition \ref{adefoffunch}.
	Suppose that Equation (\ref{aaconditionforh2}) is false. Then for a natural number $i$ 
	\begin{equation}\label{falseofflog1}
		\lfloor \frac{2^{\lfloor log_2z\rfloor-1}}{2^{i-1}}\rfloor  <   	\lfloor \frac{2^{\lfloor log_2z^{\prime}\rfloor-1}}{2^{i-1}}\rfloor.
	\end{equation}
	We prove that Equation (\ref{aaconditionforh}) is false. Let $z = 2^{n+m}$ and $z^{\prime}=2^{n^{\prime}+m^{\prime}}$ such that $n, n^{\prime} \in  Z_{\geq0}$ and $0 \leq m,m^{\prime} < 1$. Then 
	by the inequality in  (\ref{falseofflog1}), we have 
	\begin{equation}\label{falseofflog2}
	\lfloor 2^{n-i}  \rfloor < \lfloor 2^{n^{\prime}-i}  \rfloor.
	\end{equation}
	By the inequality in  (\ref{falseofflog2}), we have 
	\begin{align}
    n^{\prime} - i \geq 0\label{agrulog1} \\
    \text{and } \nonumber\\
    n+1 \leq n^{\prime}.\label{agrulog2}  
	\end{align}
	By the inequalities in (\ref{agrulog1}) and (\ref{agrulog2}), we have 

	\begin{equation}
	\lfloor \frac{z}{2^i}  \rfloor = 	\lfloor 2^{n+m-i}  \rfloor <  2^{n^{\prime}-i} \leq  \lfloor 2^{n^{\prime}+m^{\prime}-i}  \rfloor		= \lfloor \frac{z^{\prime}}{2^i}  \rfloor.\nonumber 
	\end{equation}
This shows that Equation (\ref{aaconditionforh}) is false. Therefore we have completed the proof.
\end{proof}

In the remainder of this subsection we assume that $h$ is the function that satisfies the condition $(a)$ in Definition \ref{adefoffunch}.
Our aim is to show that  the disjunctive sum of the chocolate bar  game with $CB(f,y,z)$ to the right of the bitter square and a single strip of chocolate bar  to the left have  $\mathcal{P}$-positions when $x\oplus y\oplus z=0$, so that the Grundy number of the chocolate bar $CB(f,y,z)$ is $x$ $ =y\oplus z$. 

We need Lemma \ref{fromNtoPforh} and Lemma \ref{fromPtoNforh} for this aim.
Lemma \ref{fromNtoPforh} implies that from a position $\{x,y,z\}$ of the disjunctive sum such that  $x \oplus y \oplus z \neq 0$ you always have a option that leads to a position for which the nim-sum of the coordinates is $0$.
Lemma \ref{fromPtoNforh} implies that from a position $\{x,y,z\}$ of the disjunctive sum such that  $x \oplus y \oplus z = 0$ any option leads to a position for which the nim-sum of the coordinates is not  $0$.\\
To prove Lemma \ref{fromNtoPforh} and Lemma \ref{fromPtoNforh}  we need some properties of the function $h$ that satisfies the condition $(a)$ in Definition \ref{adefoffunch}. These properties are proved in Lemma \ref{lemmaforfunctionh}, Lemma \ref{lemmaforfunctionh2} and Lemma \ref{lemmaforfunctionh3}. 

\begin{Lem}\label{lemmaforfunctionh}
Suppose that 
\begin{equation}\label{eqlemmaforh1}
h(\sum\limits_{k = 0}^n {{z_k}} {2^k}) \geq \sum\limits_{k = 0}^n {{y_k}} {2^k}. 
\end{equation}
Then, for any natural number $i$,
\begin{equation}\label{eqlemmaforh2}
h(\sum\limits_{k = i}^n {{z_k}} {2^k}) \geq \sum\limits_{k = i-1}^n {{y_k}} {2^k}.\nonumber
\end{equation}
\end{Lem}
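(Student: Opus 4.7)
The plan is to invoke the bit-shift property that condition $(a)$ of Definition \ref{adefoffunch} gives for $h$ on the truncation $z' := \sum_{k=i}^{n} z_k 2^k$ of $z := \sum_{k=0}^{n} z_k 2^k$, and then to finish by elementary floor arithmetic applied to the hypothesis $h(z)\ge y$, where $y := \sum_{k=0}^{n} y_k 2^k$.

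Since $z$ and $z'$ agree in every binary digit of index $\ge i$, one has $\lfloor z/2^i \rfloor = \lfloor z'/2^i \rfloor$. Condition $(a)$ of Definition \ref{adefoffunch} applied to this pair yields
\[
\Big\lfloor \tfrac{h(z)}{2^{i-1}} \Big\rfloor \;=\; \Big\lfloor \tfrac{h(z')}{2^{i-1}} \Big\rfloor.
\]
The hypothesis $h(z) \ge y$ and the monotonicity of $x \mapsto \lfloor x/2^{i-1} \rfloor$ give $\lfloor h(z)/2^{i-1} \rfloor \ge \lfloor y/2^{i-1} \rfloor$. Chaining with the above display and using the trivial inequality $h(z') \ge 2^{i-1} \lfloor h(z')/2^{i-1} \rfloor$, I obtain
\[
h(z') \;\ge\; 2^{i-1}\, \Big\lfloor \tfrac{y}{2^{i-1}} \Big\rfloor \;=\; \sum_{k=i-1}^{n} y_k\, 2^{k},
\]
which is exactly the claim, since the right-hand side is $y$ with its bottom $i-1$ binary digits deleted.

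No serious obstacle is anticipated; the content of the lemma is essentially a restatement of the abstract condition $(a)$ of Definition \ref{adefoffunch} in the numerical form that will be needed in Lemma \ref{fromNtoPforh} and Lemma \ref{fromPtoNforh}. The only point requiring a little care is tracking the one-bit loss: zeroing the bottom $i$ bits on the input side may disturb the bottom $i-1$ bits on the output side, but cannot affect the bits of index $\ge i-1$ of $h$, and this is precisely what the floor identity produced by condition $(a)$ records.
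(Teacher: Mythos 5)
Your proof is correct and follows essentially the same route as the paper's: both arguments reduce to the observation that $\lfloor h(z')/2^{i-1}\rfloor = \lfloor h(z)/2^{i-1}\rfloor$ for the truncation $z'$ and then compare truncations of $h(z)$ and $y$. The only difference is cosmetic — you work directly with condition $(a)$ in floor-function language, while the paper invokes the equivalent bit-decomposition form $(b)$ from Remark \ref{aadefoffunch}.
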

\begin{proof} Let $h(\sum\limits_{k = 0}^n {{z_k}} {2^k}) = \sum\limits_{k = 0}^n {{u_k}} {2^k}$ for $u_k \in \{0,1\}$. Then, by the inequality in (\ref{eqlemmaforh1}), 
\begin{equation}\label{eqqq1}
\sum\limits_{k = 0}^n {{u_k}} {2^k} \geq \sum\limits_{k = 0}^n {{y_k}} {2^k}, \nonumber
\end{equation}
and hence
\begin{equation}\label{eqqq2}
\sum\limits_{k = i-1}^n {{u_k}} {2^k} \geq \sum\limits_{k = i-1}^n {{y_k}} {2^k}. 
\end{equation}
Let $z_k^{\prime} = 0$ for $k = 0,1,2,...,i-1$.
By the inequality in (\ref{eqqq2}), Definition \ref{adefoffunch} and Remark \ref{aadefoffunch}, there exist $y_k^{\prime}$ for $k=0,...,i-2$ such that 
\begin{equation}\label{eqlemmaforh3}
h(\sum\limits_{k = i}^n {{z_k}} {2^k}) = h(\sum\limits_{k = i}^n {{z_k}} {2^k} + \sum\limits_{k = 0}^{i-1} {{z_k^{\prime}}} {2^k}) = \sum\limits_{k = i-1}^n {{u_k}} {2^k} + \sum\limits_{k = 0}^{i-2} {{y_k^{\prime}}} {2^k} \geq \sum\limits_{k = i-1}^n {{u_k}} {2^k} \geq \sum\limits_{k = i-1}^n {{y_k}} {2^k}.\nonumber
\end{equation}
\end{proof}

\begin{Lem}\label{lemmaforfunctionh2}
Suppose that $p_k \in Z_{\geq0}$ for $k=0,1,...,n$.\\
$(i)$ Suppose that for some natural number $i$
\begin{equation}\label{eqlemmaforh21}
h(\sum\limits_{k = i}^n {{p_k}} {2^k}) < \sum\limits_{k = i-1}^n {{q_k}} {2^k}.
\end{equation}
Then 
\begin{equation}\label{eqlemmaforh22}
h(\sum\limits_{k = 0}^n {{p_k}} {2^k}) < \sum\limits_{k = i-1}^n {{q_k}} {2^k}.\nonumber
\end{equation}
$(ii)$ Suppose that 
\begin{equation}\label{eqlemmaforh21b}
h(\sum\limits_{k = 0}^n {{p_k}} {2^k}) \geq \sum\limits_{k = i-1}^n {{q_k}} {2^k}.\nonumber
\end{equation}
Then 
\begin{equation}\label{eqlemmaforh22b}
h(\sum\limits_{k = i}^n {{p_k}} {2^k}) \geq \sum\limits_{k = i-1}^n {{q_k}} {2^k}.
\nonumber
\end{equation}
\end{Lem}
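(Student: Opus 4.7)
The plan is first to observe that (ii) is nothing but the contrapositive of (i) with respect to the bound $B := \sum_{k=i-1}^n q_k 2^k$, so it suffices to prove (i); in fact I will establish the two-sided equivalence $h(\sum_{k=0}^n p_k 2^k) < B \Leftrightarrow h(\sum_{k=i}^n p_k 2^k) < B$. The core idea is to reduce both inequalities to a single comparison on $\lfloor \cdot /2^{i-1} \rfloor$, exploiting the fact that $B$ is a multiple of $2^{i-1}$ and that condition $(a)$ of Definition \ref{adefoffunch} controls precisely the high bits of $h$.

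Set $z := \sum_{k=0}^n p_k 2^k$ and $z' := \sum_{k=i}^n p_k 2^k$. These two non-negative integers share all bits of weight at least $2^i$ (the low-order part $\sum_{k=0}^{i-1} p_k 2^k$ does not overflow past $2^i$ in the binary-digit regime in which the lemma is applied), hence $\lfloor z/2^i \rfloor = \lfloor z'/2^i \rfloor$. Condition $(a)$ then yields $\lfloor h(z)/2^{i-1} \rfloor = \lfloor h(z')/2^{i-1} \rfloor$; write $N$ for this common value. On the other side of the inequality, $B = M \cdot 2^{i-1}$ with $M := \sum_{k=i-1}^n q_k 2^{k-(i-1)}$, and for any $v \in Z_{\geq 0}$ a short check shows $h(v) < B \Leftrightarrow \lfloor h(v)/2^{i-1} \rfloor < M$ (the reverse direction uses $h(v) < (\lfloor h(v)/2^{i-1} \rfloor + 1)\, 2^{i-1} \leq B$). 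Applied to $v=z$ and $v=z'$, both inequalities collapse to the single statement $N < M$, so they hold simultaneously. This proves (i), and (ii) follows by contraposition.

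I do not expect any substantive obstacle here: the only place the structure of $h$ enters is the invocation of $(a)$, and everything else is the $2^{i-1}$-alignment of $B$. The one piece of bookkeeping to monitor is the shift by one in the exponent between the hypothesis of $(a)$ (involving $2^i$) and its conclusion (involving $2^{i-1}$), but this is precisely the shift that the definition is engineered to supply. An alternative, more hands-on route is to expand $h(z) = \sum y_k 2^k$ in binary, apply the equivalent form $(b)$ of Remark \ref{aadefoffunch} with all the low-order digits $z'_k$ taken to be $0$ to obtain $h(z') = \sum_{k=i-1}^n y_k 2^k + \sum_{k=0}^{i-2} y'_k 2^k$, and then compare the high-bit parts of $h(z)$, $h(z')$ and $B$ directly; the argument reduces to the same top/bottom-half bookkeeping on the binary expansion.
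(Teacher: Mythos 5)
Your proof is correct and rests on the same two ingredients as the paper's: condition $(a)$ applied to $z=\sum_{k=0}^n p_k2^k$ and its truncation $z'=\sum_{k=i}^n p_k2^k$ (which agree after division by $2^i$), together with the fact that the bound $\sum_{k=i-1}^n q_k2^k$ is a multiple of $2^{i-1}$; the paper merely carries out the final comparison via the binary-digit form $(b)$ of Remark \ref{aadefoffunch} rather than your cleaner floor-arithmetic equivalence $h(v)<B\Leftrightarrow\lfloor h(v)/2^{i-1}\rfloor<M$. Your parenthetical caveat that the $p_k$ must be binary digits for $\lfloor z/2^i\rfloor=\lfloor z'/2^i\rfloor$ is an implicit assumption the paper's own proof also makes, so it is not a gap relative to the original.
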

\begin{proof}
We prove $(i)$. Let $h(\sum\limits_{k = 0}^n {{p_k}} {2^k})= \sum\limits_{k = 0}^n {{r_k}} {2^k}$ for $r_k \in \{0,1\}$. Then, by Remark \ref{aadefoffunch}, there exist $r_k^{\prime}$ for $k=0,1,2,...,i-2$ such that 
$h(\sum\limits_{k = i}^n {{p_k}} {2^k} + \sum\limits_{k = 0}^{i-1} {0 \times } {2^k}) $ $= \sum\limits_{k = i-1}^n {{r_k}} {2^k}+\sum\limits_{k = 0}^{i-2} {{r_k^{\prime}}} {2^k}$.
Then, by the inequality in (\ref{eqlemmaforh21}), we have
\begin{equation}\label{eqlemmaforh23}
\sum\limits_{k = i-1}^{n} {{q_k}} {2^k} > \sum\limits_{k = i-1}^n {{r_k}} {2^k}+\sum\limits_{k = 0}^{i-2} {{r_k^{\prime}}} {2^k},\nonumber
\end{equation}
and hence we have the inequality in (\ref{firstcasefor}) or Relation (\ref{secondcasefor}).

\begin{equation}\label{firstcasefor}
q_n > r_n.
\end{equation}
There exists $j \in Z_{\geq0}$ such that 
\begin{equation}\label{secondcasefor}
i-1 \leq j \leq n, q_k = r_k \text{ for } k = j+1,j+2,...,n \text{ and } q_j > r_j. 
\end{equation}

Then, by the inequality in (\ref{firstcasefor}) or Relation (\ref{secondcasefor}), 
\begin{equation}\label{eqlemmaforh24}
\sum\limits_{k = i-1}^{n} {{q_k}} {2^k} > \sum\limits_{k = i-1}^n {{r_k}} {2^k}+\sum\limits_{k = 0}^{i-2} {{r_k}} {2^k} = \sum\limits_{k = 0}^n {{r_k}} {2^k}=h(\sum\limits_{k = 0}^n {{p_k}} {2^k}).\nonumber
\end{equation}
We prove $(ii)$. This is the contraposition of the proposition in $(i)$ of this lemma.
\end{proof}

\begin{Lem}\label{lemmaforfunctionh3}
Suppose that 
	\begin{equation}\label{eqlemmaforh212}
	h(\sum\limits_{k = i}^n {{p_k}} {2^k}) < \sum\limits_{k = i-1}^n {{q_k}} {2^k}+2^{i-1}.
	\end{equation}
	Then 
	\begin{equation}\label{eqlemmaforh222}
	h(\sum\limits_{k = i-1}^n {{p_k}} {2^k}) < \sum\limits_{k = i-1}^n {{q_k}} {2^k}+2^{i-1}.\nonumber
	\end{equation}
\end{Lem}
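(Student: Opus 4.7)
The plan is to reduce the claim to a single application of condition~(a) of Definition~\ref{adefoffunch}. The key observation is that extending the index range of the sum from $\{i,\ldots,n\}$ to $\{i-1,\ldots,n\}$ only adds the term $p_{i-1}2^{i-1}$, and since the $p_k$ are binary digits (following the conventions in Lemmas~\ref{lemmaforfunctionh} and~\ref{lemmaforfunctionh2}) this added term is strictly less than $2^i$.

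I would first set $A:=\sum_{k=i}^{n}p_k 2^k$ and $A':=\sum_{k=i-1}^{n}p_k 2^k$, so $A'=A+p_{i-1}2^{i-1}$. Because $A$ is a multiple of $2^i$ and $A'-A<2^i$, the two integers satisfy $\lfloor A/2^i\rfloor=\lfloor A'/2^i\rfloor$. Condition~(a) of Definition~\ref{adefoffunch} then gives $\lfloor h(A)/2^{i-1}\rfloor=\lfloor h(A')/2^{i-1}\rfloor$, i.e.\ $h(A)$ and $h(A')$ lie in the same block $[m\cdot 2^{i-1},(m+1)\cdot 2^{i-1})$ of consecutive integers.

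Next, set $B:=\sum_{k=i-1}^{n}q_k 2^k$, which is itself a multiple of $2^{i-1}$, and so $B+2^{i-1}$ is a multiple of $2^{i-1}$ as well. The hypothesis~(\ref{eqlemmaforh212}), namely $h(A)<B+2^{i-1}$, is then equivalent to the floor inequality $\lfloor h(A)/2^{i-1}\rfloor\leq B/2^{i-1}$. Combining with the equality from the previous paragraph, the same bound holds for $h(A')$, and multiplying back through by $2^{i-1}$ yields $h(A')\leq B+2^{i-1}-1<B+2^{i-1}$, which is precisely the desired conclusion.

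No substantive obstacle is anticipated: the only subtlety is aligning the strict-versus-non-strict inequalities with the floor-based reformulation of condition~(a), and this is automatic because the threshold $B+2^{i-1}$ is itself a multiple of $2^{i-1}$, so the strict inequality against $B+2^{i-1}$ translates cleanly into a floor inequality that survives the transfer from $A$ to $A'$.
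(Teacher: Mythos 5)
Your proof is correct, but it takes a genuinely different route from the paper's. The paper absorbs the term $2^{i-1}$ into new coefficients $q^{\prime}_k$, pads the argument with $p_{n+1}=0$, applies part $(i)$ of Lemma \ref{lemmaforfunctionh2} to pass from $h(\sum_{k=i}^{n+1}p_k2^k)$ down to $h(\sum_{k=0}^{n+1}p_k2^k)$, and then invokes the monotonicity of $h$ to return to $h(\sum_{k=i-1}^{n}p_k2^k)$. You instead apply condition $(a)$ of Definition \ref{adefoffunch} directly: $A=\sum_{k=i}^{n}p_k2^k$ is a multiple of $2^i$ and $A^{\prime}=A+p_{i-1}2^{i-1}$ exceeds it by less than $2^i$, so $\lfloor A/2^i\rfloor=\lfloor A^{\prime}/2^i\rfloor$, hence $h(A)$ and $h(A^{\prime})$ lie in the same length-$2^{i-1}$ block, and since the threshold $\sum_{k=i-1}^{n}q_k2^k+2^{i-1}$ is a multiple of $2^{i-1}$ the strict inequality transfers. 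Your version is shorter and self-contained (it needs neither Lemma \ref{lemmaforfunctionh2} nor monotonicity), and it in fact proves more: \emph{any} $2^{i-1}$-aligned bound, upper or lower, transfers between $h(A)$ and $h(A^{\prime})$, which is exactly the two-sided step the paper needs when it passes from (\ref{construct3}) to (\ref{construct3b}) and from (\ref{construct5}) to (\ref{construct5b}), whereas the paper must supply the lower-bound half separately via monotonicity. The one caveat is your assumption that $p_{i-1}\in\{0,1\}$: the lemma nominally inherits $p_k\in Z_{\geq0}$ from Lemma \ref{lemmaforfunctionh2}, and your floor identity $\lfloor A/2^i\rfloor=\lfloor A^{\prime}/2^i\rfloor$ would fail for $p_{i-1}\geq 2$. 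This is harmless—every application in the paper has binary digits, and the paper's own proof of Lemma \ref{lemmaforfunctionh2} implicitly requires the same—but you should record $p_{i-1}\in\{0,1\}$ as an explicit hypothesis of your argument.
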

\begin{proof}
Let $\sum\limits_{k = i-1}^{n+1} {{q^{\prime}_k}} {2^k} = \sum\limits_{k = i-1}^n {{q_k}} {2^k}+2^{i-1}$ and $p_{n+1}=0$. Then, by the inequality in (\ref{eqlemmaforh212}), 
we have $h(\sum\limits_{k = i}^{n+1} {{p_k}} {2^k}) < \sum\limits_{k = i-1}^{n+1} {{q^{\prime}_k}} {2^k}$, and statement 
$(i)$ of Lemma \ref{lemmaforfunctionh2} implies 
$	h(\sum\limits_{k = i-1}^n {{p_k}} {2^k}) \leq h(\sum\limits_{k = 0}^{n+1} {{p_k}} {2^k}) < \sum\limits_{k = i-1}^{n+1} {{q^{\prime}_k}} {2^k}$ $=\sum\limits_{k = i-1}^n {{q_k}} {2^k}+2^{i-1}$. Therefore we have completed the proof of this lemma.
\end{proof}

If the nim-sum of the cooridinates of a position is not 0, then 
by Definition \ref{moveforthreecordinates} and the following Lemma \ref{fromNtoPforh},
there is always an option that leads to a position whose nim-sum is 0.

\begin{Lem}\label{fromNtoPforh}
Suppose that
 $x \oplus y \oplus z \neq 0$ and 
\begin{equation}\label{inequalityyz}
y \leq h(z).
\end{equation}
 
Then at least one of the following statements is true.\\
$(1)$ $u\oplus y\oplus z= 0$  for some $u\in Z_{\geq 0}$ such that  $u<x$.\\
$(2)$ $x\oplus v\oplus z= 0$  for some $v\in Z_{\geq 0}$ such that  $v<y$.\\
$(3)$ $x\oplus y\oplus w= 0$ for some $w\in Z_{\geq 0}$ such that  $w<z$ and $y \leq h(w)$.\\
$(4)$ $x\oplus v\oplus w^{\prime}= 0$   for some $v,w^{\prime} \in Z_{\geq 0}$ such that  $v<y,w^{\prime} <z$ and $v=h(w^{\prime})$.\\
\end{Lem}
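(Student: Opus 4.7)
The plan is to imitate the classical Nim-sum argument for three heaps, with the chocolate-bar constraint $y\le h(z)$ requiring extra care in one subcase. Since $x\oplus y\oplus z\neq 0$, let $i$ be the largest bit position at which $x\oplus y\oplus z$ equals $1$; on bits strictly above $i$ the three quantities $x$, $y$, $z$ nim-sum to $0$. If the $i$-th bit of $x$ equals $1$, set $u=y\oplus z$: then $u$ agrees with $x$ above bit $i$ while $u_i=y_i\oplus z_i=1\oplus x_i=0<1=x_i$, so $u<x$ and $u\oplus y\oplus z=0$, which is statement $(1)$. The symmetric computation yields statement $(2)$ when the $i$-th bit of $y$ equals $1$ and the $i$-th bit of $x$ equals $0$.

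The delicate case is $x_i=y_i=0$ and $z_i=1$. Put $w=x\oplus y$: above bit $i$ it agrees with $z$, and $w_i=0<1=z_i$, so $w<z$, while $x\oplus y\oplus w=0$ by construction. If $y\le h(w)$, statement $(3)$ holds with this $w$ and we are done. Otherwise $y>h(w)$; in this subcase I would aim for statement $(4)$ by producing $w^{\prime}<z$ with $h(w^{\prime})<y$ and $x\oplus w^{\prime}=h(w^{\prime})$, and then taking $v=h(w^{\prime})$.

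To produce such a $w^{\prime}$, I would exploit Lemmas \ref{lemmaforfunctionh}, \ref{lemmaforfunctionh2}, and \ref{lemmaforfunctionh3}, which collectively say that truncating low bits off the input of $h$ costs at most one bit of precision on the output, and that inequalities for $h$ propagate through such truncations in both directions. A first consequence is that since $w$ and $z$ agree above bit $i$, $h(w)$ and $h(z)$ must share their bits at positions $\ge i-1$; combined with the sandwich $h(w)<y\le h(z)$ this forces $y$ to have the same upper bits, so the crossover happens only in the low bits. The candidate $w^{\prime}$ would then be built from the top: fix $w^{\prime}_k=z_k$ for $k>i$, and use the bit-by-bit freedom granted by condition $(a)$ of Definition \ref{adefoffunch} to choose the remaining bits so that $h(w^{\prime})=x\oplus w^{\prime}$, while simultaneously keeping $w^{\prime}<z$ and $h(w^{\prime})<y$. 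The main obstacle is exactly this simultaneous coordination: condition $(a)$ supplies the right degree of freedom — varying the lowest $k$ bits of the input of $h$ changes only the lowest $k-1$ bits of its output — but threading the strict inequality $w^{\prime}<z$, the fixed-point equation $h(w^{\prime})=x\oplus w^{\prime}$, and the threshold $h(w^{\prime})<y$ all at once is where the bulk of the case analysis lives, and is precisely what Lemmas \ref{lemmaforfunctionh}--\ref{lemmaforfunctionh3} are designed to support.
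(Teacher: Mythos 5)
Your handling of the cases where the leading discrepant bit lies in $x$ or in $y$, and of the subcase $y\le h(w)$ with $w=x\oplus y$, matches the paper's proof exactly and is correct. The problem is the remaining subcase $y>h(w)$: you correctly identify the target (find $w^{\prime}<z$ and $v=h(w^{\prime})<y$ with $x\oplus v\oplus w^{\prime}=0$), but you then only describe a plan and explicitly defer ``the bulk of the case analysis'' to the cited lemmas. That deferred part is the actual content of the proof of statement $(4)$; Lemmas \ref{lemmaforfunctionh}--\ref{lemmaforfunctionh3} are tools, not a construction, and without carrying it out the proof is incomplete.

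Concretely, two ingredients are missing. First, you need to locate the crossover index: since $\sum_{k=n-s-1}^{n}y_k2^k\le h(w)<y$, there is a $j$ with
\begin{equation}
\sum_{k=n-j}^{n}y_k2^k\le h(w)<\sum_{k=n-j-1}^{n}y_k2^k,\nonumber
\end{equation}
which forces $y_{n-j-1}=1$ and $n-j-1<n-s-1$; setting $v_k=y_k$ for $k\ge n-j$ and $v_{n-j-1}=0$ is what secures $v<y$, and keeping $w^{\prime}_k=w_k$ down to bit $n-j$ (hence through bit $n-s-1$, where $w$ already drops below $z$) is what secures $w^{\prime}<z$. Second, the ``fixed-point equation'' $h(w^{\prime})=x\oplus w^{\prime}$ is not solved as an equation: one chooses $v_{t-1}\in\{0,1\}$ at each step so as to maintain the two-sided invariant
\begin{equation}
\sum_{k=t}^{n}v_k2^k\le h\Bigl(\sum_{k=t}^{n}w^{\prime}_k2^k\Bigr)<\sum_{k=t}^{n}v_k2^k+2^{t},\nonumber
\end{equation}
then sets $w^{\prime}_{t-1}=x_{t-1}+v_{t-1}\pmod 2$ (so $x\oplus v\oplus w^{\prime}=0$ holds by construction), and invokes Lemma \ref{lemmaforfunctionh3} to show the invariant survives appending the bit $w^{\prime}_{t-1}$ to the argument of $h$. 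At $t=0$ the invariant collapses to $v=h(w^{\prime})$. Your sketch gestures at all of this but proves none of it, so as written the argument for statement $(4)$ has a genuine gap.
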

\begin{proof}
Let $x= \sum\limits_{k = 0}^n {{x_k}} {2^k}$, $y= \sum\limits_{k = 0}^n {{y_k}} {2^k}$ and $z= \sum\limits_{k = 0}^n {{z_k}} {2^k}$. If $n=0$, then this lemma is obvious. We assume that $n \geq 1$.
Suppose that there exists a non-negative integer $s$ such that $x_i + y_i + z_i = 0 \ (mod \ 2)$ for $i = n,n-1,...,n-s$ and  
\begin{equation}\label{xyznminush}
x_{n-s-1} + y_{n-s-1} + z_{n-s-1} \neq 0 \ (mod \ 2).
\end{equation}
\underline{Case $(i)$}  Suppose that $x_{n-s-1}=1$. Then, we define $u = \sum^{n}_{i=1}u_i2^i$ by $u_i=x_i$ for $i=n,n-1,..., n-s $,  $u_{n-s-1}=0<x_{n-s-1}$ and $u_i=y_i+z_i$ $( mod \ 2 )$ for $i=n-s-2,n-s-3,...,0$. Then we have $u \oplus y \oplus z = 0$ and $u<x$. Therefore, we have statement $(1)$ of this lemma.\\
\underline{Case $(ii)$} Suppose that $y_{n-s-1}=1$. Then, by the method that is similar to the one  used in $(i)$, we prove that $x \oplus v \oplus z = 0$ for some $v \in Z_{\geq 0}$ such that $v<y$. Therefore we have statement $(2)$ of this lemma.\\
\underline{Case $(iii)$} We suppose that 
\begin{equation}\label{xyztonmink}
 z_{n-s-1} =1.
\end{equation}
For $i = n,n-1,...,n-s$, let
\begin{equation}\label{fromntonminask}
w_i = z_i.
\end{equation}
Let $w_i = x_i + y_i \ (mod \ 2)$ for $i=n-s-1,...,0$. 
By the inequality in (\ref{xyznminush}) and Equation (\ref{xyztonmink}), we have 
 $w_{n-s-1}$ $=x_{n-s-1}+y_{n-s-1} = 0$ $(mod \ 2)$, and hence
\begin{equation}\label{newinnauai1}
w_{n-s-1}=0 < 1 = z_{n-s-1}.
\end{equation}
There are two subcases here.\\
\underline{Subcase $(iii.1)$} If $y \leq h(w)$, then we have statement $(3)$ of this lemma. \\
\underline{Subcase $(iii.2)$ } Next we suppose that 
\begin{equation}\label{ybiggernthanhw}
y > h(w).
\end{equation}
By the inequality in (\ref{inequalityyz}), we have $\sum\limits_{k = 0}^n {{y_k}} {2^k}
\leq h(\sum\limits_{k = 0}^n {{z_k}} {2^k})$, and hence by  Lemma \ref{lemmaforfunctionh} and (\ref{fromntonminask}) 
\begin{equation}\label{untilies1}
\sum\limits_{k = n-s-1}^n {{y_k}} {2^k}
\leq h(\sum\limits_{k = n-s}^n {{z_k}} {2^k})=h(\sum\limits_{k = n-s}^n {{w_k}} {2^k}) \leq h(w).
\end{equation}
By  the inequalities in (\ref{untilies1}) and  (\ref{ybiggernthanhw}), there exists a natural number $j$ such that 

\begin{align}
\sum\limits_{k = n-j}^n {{y_k}} {2^k}
\leq h(\sum\limits_{k = 0}^n {{w_k}} {2^k}) =h(w)\label{e1quationse31}\\
\text{and}
\sum\limits_{k = n-j-1}^n {{y_k}} {2^k}
> h(\sum\limits_{k = 0}^n {{w_k}} {2^k})=h(w) .\label{e1quationse32}
\end{align}     
By the inequalities in (\ref{untilies1}) and  (\ref{e1quationse32}),
\begin{equation}\label{star1ineq}
n-j-1 < n-s-1.
\end{equation}
By the inequality in (\ref{e1quationse31}) and $(ii)$ of Lemma \ref{lemmaforfunctionh2}, 
\begin{equation}\label{e1quationse31b}
\sum\limits_{k = n-j}^n {{y_k}} {2^k}
\leq h(\sum\limits_{k = n-j+1}^n {{w_k}} {2^k})\leq h(\sum\limits_{k = n-j}^n {{w_k}} {2^k}).
\end{equation}
By the inequality in (\ref{e1quationse32}), 
\begin{equation}\label{e1quationse32b}
\sum\limits_{k = n-j-1}^n {{y_k}} {2^k}
> h(\sum\limits_{k = n-j}^n {{w_k}} {2^k}).
\end{equation}
By the inequalities in (\ref{e1quationse31b}) and  (\ref{e1quationse32b}), we have 
\begin{equation}\label{e1quationse31and31b}
\sum\limits_{k = n-j}^n {{y_k}} {2^k}
\leq h(\sum\limits_{k = n-j}^n {{w_k}} {2^k}) < \sum\limits_{k = n-j-1}^n {{y_k}} {2^k}.
\end{equation}
We construct $v$ and $w^{\prime}$ by assigning values to $v_i$ and $w_i^{\prime}$ for $i = n,n-1,n-2,...,0$. 

First, for $i = n,n-1,...,n-j$, let 
\begin{equation}\label{constructvandwprime}
w_i^{\prime} = w_i \text{ \ and } v_i = y_i,
\end{equation}
and let 
\begin{equation}\label{star2ineq}
v_{n-j-1} = 0 < 1 = y_{n-j-1} 
\end{equation}
and 
\begin{equation}
w_{n-j-1}^{\prime}=x_{n-j-1} + v_{n-j-1}. \nonumber
\end{equation}
Since $v_{n-j-1} = 0$ and $y_{n-j-1}=1$, by the inequality in (\ref{e1quationse31and31b})
\begin{equation}\label{constru1}
\sum\limits_{k = n-j-1}^n {{v_k}} {2^k} 
\leq h(\sum\limits_{k = n-j}^n {{w_k^{\prime}}} {2^k}) < \sum\limits_{k = n-j-1}^n {{v_k}} {2^k} + 2^{n-j-1}.
\end{equation}
By the inequality in (\ref{constru1}) and Lemma \ref{lemmaforfunctionh3}, we have 

\begin{equation}\label{constru1b}
\sum\limits_{k = n-j-1}^n {{v_k}} {2^k} 
\leq h(\sum\limits_{k = n-j-1}^n {{w_k^{\prime}}} {2^k}) < \sum\limits_{k = n-j-1}^n {{v_k}} {2^k} + 2^{n-j-1}.
\end{equation}
Next we prove the inequality in (\ref{construct1}) for any $t=n-j-1, n-j-2,...,2,1,0$ recursively. 
\begin{equation}\label{construct1}
\sum\limits_{k = t}^n {{v_k}} {2^k}
\leq h(\sum\limits_{k = t}^n {{w_k^{\prime}}} {2^k}) < \sum\limits_{k = t}^n {{v_k}} {2^k} + 2^t.
\end{equation}
By the inequality in (\ref{constru1b}), we have the inequality in (\ref{construct1}) for $t = n-j-1$.
We suppose the inequality in (\ref{construct1}) for some natural number $t$ such that $t \leq n-j-1$. Then we have the inequality in (\ref{construct2}) or the inequality in (\ref{construct4}). Our aim is to prove (\ref{construct3b}) and (\ref{construct5b}) by using these inequalities.

If
\begin{equation}\label{construct2}
\sum\limits_{k = t}^n {{v_k}} {2^k}+2^{t-1}
\leq h(\sum\limits_{k = t}^n {{w_k^{\prime}}} {2^k}) < \sum\limits_{k = t}^n {{v_k}} {2^k}+2^{t},
\end{equation}
then let $v_{t-1}=1$ and $w_{t-1}^{\prime}=x_{t-1}+ v_{t-1}$ $(mod \ 2)$. Since $v_{t-1}=1$, by the inequality in (\ref{construct2})  we have 
\begin{equation}\label{construct3}
\sum\limits_{k = t-1}^n {{v_k}} {2^k}
\leq h(\sum\limits_{k = t}^n {{w_k^{\prime}}} {2^k}) < \sum\limits_{k = t-1}^n {{v_k}} {2^k}+2^{t-1}.
\end{equation}
Note that $v_{t-1}2^{t-1} + 2^{t-1} = 2^t$. 
By Lemma \ref{lemmaforfunctionh3} and the inequality in (\ref{construct3}), 
\begin{equation}\label{construct3b}
\sum\limits_{k = t-1}^n {{v_k}} {2^k}
\leq h(\sum\limits_{k = t-1}^n {{w_k^{\prime}}} {2^k}) < \sum\limits_{k = t-1}^n {{v_k}} {2^k}+2^{t-1}.
\end{equation}

If
\begin{equation}\label{construct4}
\sum\limits_{k = t}^n {{v_k}} {2^k}+2^{t-1}
> h(\sum\limits_{k = t}^n {{w_k^{\prime}}} {2^k}),
\end{equation}
then let $v_{t-1}=0$ and $w_{t-1}^{\prime}=x_{t-1}+ v_{t-1}$ $(mod \ 2)$.\\
Since $v_{t-1}=0$, the inequalities in (\ref{construct4}) and  (\ref{construct1}) give 
\begin{equation}\label{construct5}
\sum\limits_{k = t-1}^n {{v_k}} {2^k}
\leq h(\sum\limits_{k = t}^n {{w_k^{\prime}}} {2^k}) < \sum\limits_{k = t-1}^n {{v_k}} {2^k}+2^{t-1}.
\end{equation}
Then, by the inequality in (\ref{construct5}) and Lemma \ref{lemmaforfunctionh3},  we have 
\begin{equation}\label{construct5b}
\sum\limits_{k = t-1}^n {{v_k}} {2^k}
\leq h(\sum\limits_{k = t-1}^n {{w_k^{\prime}}} {2^k}) < \sum\limits_{k = t-1}^n {{v_k}} {2^k}+2^{t-1}.
\end{equation}

In this way we get the inequality in (\ref{construct3b}) or the inequality in (\ref{construct5b})  by the inequality in (\ref{construct1}). Note that  the inequality in (\ref{construct3b}) and  the inequality in (\ref{construct5b})  are the same inequality.
By continuing this process we have 
\begin{equation}
\sum\limits_{k = 0}^n {{v_k}} {2^k}
\leq h(\sum\limits_{k = 0}^n {{w_k^{\prime}}} {2^k}) < \sum\limits_{k = 0}^n {{v_k}} {2^k}+2^{0}.\nonumber
\end{equation}
Therefore, we have
$\sum\limits_{k = 0}^n {{v_k}} {2^k}
= h(\sum\limits_{k = 0}^n {{w_k^{\prime}}} {2^k}).$ By iequalities (\ref{newinnauai1}), (\ref{star1ineq}), (\ref{star2ineq}) and Equation (\ref{constructvandwprime}), we have
$v < y$ and $w^{\prime} < z$. Therefore, we have statement $(4)$ of this lemma.
\end{proof}
If the nim-sum of the cooridinates of a position is 0, then 
by Definition \ref{moveforthreecordinates} and the following Lemma \ref{fromPtoNforh},
any option from this position leads to a position whose nim-sum is not 0.

\begin{Lem}\label{fromPtoNforh}
If $x \oplus y \oplus z = 0$ and $y \le h(z)$
, then the following hold:\\
$(i)$  $u \oplus y \oplus z \ne 0$ for any $u \in {Z_{ \ge 0}}$ such that $u<x$.\\
$(ii)$  $x \oplus v \oplus z \ne 0$ for any $v \in {Z_{ \ge 0}}$  such that $v<y$.\\
$(iii)$  $x \oplus y \oplus w \ne 0$ for any $w \in {Z_{ \ge 0}}$   such that  $w<z$.\\
$(iv)$   $x \oplus v \oplus w \ne 0$ for any $v,w \in {Z_{ \ge 0}}$  such that $v<y,w<z$ and $v= h(w)$.
\end{Lem}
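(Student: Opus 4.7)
The plan for parts $(i)$--$(iii)$ is the standard nim cancellation: from $x\oplus y\oplus z=0$ together with $u\oplus y\oplus z=0$ one reads off $u=x$, contradicting $u<x$; the same reasoning yields $(ii)$ and $(iii)$ by cancelling in the other two slots. These parts use nothing about $h$ and are essentially immediate.

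The substance lies in $(iv)$, which I will handle by contradiction. Suppose $x\oplus v\oplus w=0$ with $v=h(w)$, $v<y$, $w<z$. Combined with $x\oplus y\oplus z=0$ this gives $v\oplus w=y\oplus z$. Let $k$ be the highest index at which the binary expansions of $w$ and $z$ differ; since $w<z$ we must have $w_k=0$, $z_k=1$, and $w_j=z_j$ for all $j>k$. In particular $\lfloor z/2^{k+1}\rfloor=\lfloor w/2^{k+1}\rfloor$, so condition $(a)$ of Definition \ref{adefoffunch}, applied with $i=k+1$, forces $\lfloor h(z)/2^k\rfloor=\lfloor h(w)/2^k\rfloor$. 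In other words, $v=h(w)$ and $h(z)$ agree in every bit of index $\ge k$.

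Next I would read $v\oplus w=y\oplus z$ off bit by bit. For $j>k$ the relation $w_j=z_j$ forces $v_j=y_j$, while at position $k$ the relation $(w_k,z_k)=(0,1)$ forces $v_k=1-y_k$. Combining $v_j=y_j$ for $j>k$ with $v_j=h(z)_j$ for $j\ge k$ shows that $y$ and $h(z)$ have identical bits strictly above $k$. The hypothesis $y\le h(z)$ now rules out $y_k>h(z)_k$, while $y_k=h(z)_k$ would give $v_k=h(z)_k=y_k$, contradicting $v_k=1-y_k$. Hence $y_k=0$, $h(z)_k=1$, and so $v_k=1$. But then $v$ and $y$ coincide above bit $k$ and $v_k=1>0=y_k$, yielding $v>y$, contrary to $v<y$.

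The main obstacle is $(iv)$: one must combine three ingredients, namely $y\le h(z)$, the identity $v=h(w)$, and the digit-stability condition $(a)$, and notice that they collectively pin down bit $k$ in an inconsistent way. Once condition $(a)$ is invoked at the index $i=k+1$ dictated by $k$, the remainder is a bit-by-bit comparison with no further finesse.
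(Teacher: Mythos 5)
Your proof is correct. Parts $(i)$--$(iii)$ are the same immediate nim-cancellation as in the paper, and your argument for $(iv)$ is sound: every step checks out, including the application of condition $(a)$ of Definition \ref{adefoffunch} at $i=k+1$ (which is legitimate since $\lfloor z/2^{k+1}\rfloor=\lfloor w/2^{k+1}\rfloor$ by the choice of $k$) and the final comparison showing $v>y$.

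However, your route through $(iv)$ differs from the paper's in a worthwhile way. Both arguments localize at the same critical position, the highest bit where $w$ and $z$ disagree (your $k$ is the paper's $j-1$). The paper argues directly rather than by contradiction: it applies Lemma \ref{lemmaforfunctionh} to $y\le h(z)$ to get $\sum_{t\ge j-1}y_t2^t\le h\bigl(\sum_{t\ge j}z_t2^t\bigr)$, then uses monotonicity of $h$ and the agreement of $w$ with $z$ on bits $\ge j$ to squeeze $\sum_{t\ge j-1}y_t2^t\le v<y$, concluding that $v$ and $y$ share all bits down to $j-1$; the parity of $x_{j-1}+v_{j-1}+w_{j-1}$ then differs from that of $x_{j-1}+y_{j-1}+z_{j-1}=0$, so the nim-sum is nonzero. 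You instead invoke condition $(a)$ directly to align $h(w)=v$ with $h(z)$ on bits $\ge k$, combine this with the bitwise identity $v\oplus w=y\oplus z$ and with $y\le h(z)$ to pin down $y_k=0$, $v_k=1$, and derive the contradiction $v>y$. Your version bypasses both Lemma \ref{lemmaforfunctionh} and the explicit use of monotonicity, trading them for a direct appeal to the digit-stability condition; the paper's version avoids arguing by contradiction and reuses machinery it needs elsewhere anyway. Both are valid, and neither has a gap.
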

\begin{proof}
Statements $(i)$,$(ii)$ and $(iii)$ of this lemma follow directly from Definition \ref{definitionfonimsum11} (the definition of nim-sum). \\
We now prove statement $(iv)$. We suppose that $v= h(w)$ for some $w\in Z_{\geq 0}$ such that $v<y,w<z$. 
We also suppose that 
\begin{equation}\label{construct6a}
w_i=z_i \text{ for } i = n,n-1,n-2,...,j \text{ and } w_{j-1}<z_{j-1}.
\end{equation}
By $y \le h(z)$, we have $h(\sum\limits_{k = 0}^n {{z_k}} {2^k}) \geq  \sum\limits_{k = 0}^n {{y_k}} {2^k}$. Hence, by Lemma \ref{lemmaforfunctionh}, we have 

\begin{equation}\label{construct6}
h(\sum\limits_{k = j}^n {{z_k}} {2^k}) \geq  \sum\limits_{k = j-1}^n {{y_k}} {2^k}.
\end{equation}
Since $v= h(w)$ and $v < y$, Relation (\ref{construct6a}) gives
\begin{equation}\label{construct7}
h(\sum\limits_{k = j}^n {{z_k}} {2^k}) = h(\sum\limits_{k = j}^n {{w_k}} {2^k}) \leq h(w) = v =  \sum\limits_{k = 0}^n {{v_k}} {2^k} < \sum\limits_{k = 0}^n {{y_k}} {2^k}.
\end{equation}
By the inequalities in (\ref{construct6}) and (\ref{construct7}), we have 
\begin{equation}
\sum\limits_{k = j-1}^n {{y_k}} {2^k} \leq \sum\limits_{k = 0}^n {{v_k}} {2^k} < \sum\limits_{k = 0}^n {{y_k}} {2^k}. \nonumber 
\end{equation}
Hence, for $k = n,n-1,n-2,...,j-1$,
\begin{equation}\label{conditionyv1}
v_{k}=y_{k}. 
\end{equation}
Since $x \oplus y \oplus z = 0$, we have 
\begin{equation}\label{conditionyv2}
x_{j-1}+y_{j-1}+z_{j-1} = 0 \ (mod \ 2).
\end{equation}
By Relation (\ref{construct6a}), Equation (\ref{conditionyv1}) and  Equation (\ref{conditionyv2}),
 we have $x_{j-1}+v_{j-1}+w_{j-1} \neq 0 \ (mod \ 2)$, and hence 
 $x \oplus v \oplus w \neq 0$.
\end{proof}

\begin{defn}\label{defofABk}
Let $A_{h}=\{\{x,y,z\}:x,y,z\in Z_{\geq 0},y \leq h(z) $ and $x\oplus y \oplus z=0\}$ and  $B_{h}=\{\{x,y,z\}:x,y,z\in Z_{\geq 0},y \leq h(z)$ and $x\oplus y \oplus z\neq 0\}$.
\end{defn}

\begin{Lem}\label{AtoBandBtoA}
	Let $A_h$ and $B_h$ be the sets defined in Definition \ref{defofABk}. Then the following hold:\\
$(i)$ If we start with a position in $A_h$, then any option (move) leads to a position in $B_h$.\\
$(ii)$ If we start with a position in $B_h$, then there is at least one option (move) that leads to a position in $A_h$.\\
\end{Lem}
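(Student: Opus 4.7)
The plan is to derive both halves of the lemma by combining the structural description of $move_h$ in Definition \ref{moveforthreecordinates} with the two previously established lemmas: Lemma \ref{fromPtoNforh} for part $(i)$ and Lemma \ref{fromNtoPforh} for part $(ii)$. In both parts, the nim-sum assertions come straight from those lemmas, and the only genuine bookkeeping is to verify that the inequality $y \leq h(z)$ which defines membership in $A_h$ (or its absence in $B_h$) is correctly preserved or re-established after each type of move.

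For part $(i)$, I would fix $\{x,y,z\} \in A_h$ and take cases according to which of the three forms in $move_h(\{x,y,z\})$ the option belongs to. For a move to $\{u,y,z\}$ with $u<x$, the condition $y\leq h(z)$ is unchanged, and Lemma \ref{fromPtoNforh}$(i)$ produces a non-zero nim-sum; for $\{x,v,z\}$ with $v<y$, we still have $v<y\leq h(z)$, and Lemma \ref{fromPtoNforh}$(ii)$ applies. The interesting case is a move to $\{x,\min(y,h(w)),w\}$ with $w<z$: here $\min(y,h(w))\leq h(w)$ is automatic, so the new triple satisfies the required inequality, and I would split on whether the minimum is $y$ or $h(w)$. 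If $\min(y,h(w))=y$, then the nim-sum equals $x\oplus y \oplus w$ and Lemma \ref{fromPtoNforh}$(iii)$ finishes the case; if instead $\min(y,h(w))=h(w)<y$, then setting $v=h(w)$ reproduces the exact hypothesis of Lemma \ref{fromPtoNforh}$(iv)$.

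For part $(ii)$, I would apply Lemma \ref{fromNtoPforh} to $\{x,y,z\}\in B_h$ and translate each of its four conclusions into a legal move in $move_h$ landing in $A_h$. Options $(1)$ and $(2)$ are immediate: the moves $\{u,y,z\}$ and $\{x,v,z\}$ preserve $y \leq h(z)$ (for $(2)$, via $v<y\leq h(z)$). For option $(3)$, since $y \leq h(w)$, we have $\min(y,h(w))=y$, so the move $\{x,\min(y,h(w)),w\}$ collapses to $\{x,y,w\}$, a position with zero nim-sum and $y \leq h(w)$. For option $(4)$, since $v=h(w')$ and $v<y$, the minimum $\min(y,h(w'))$ equals $v$ itself, so the move lands at $\{x,v,w'\}$, which has $v = h(w') \leq h(w')$ and zero nim-sum, hence lies in $A_h$.

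The only place I anticipate friction is making the case split in part $(i)$ line up cleanly against Lemma \ref{fromPtoNforh}, because the first coordinate of the new position depends on $\min(y,h(w))$ rather than on $y$ alone; once it is observed that the two sub-cases correspond precisely to clauses $(iii)$ and $(iv)$ of that lemma, the argument closes without further work.
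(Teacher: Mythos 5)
Your proposal is correct and follows the same route as the paper, which simply cites Lemma \ref{fromPtoNforh} for part $(i)$ and Lemma \ref{fromNtoPforh} for part $(ii)$ together with the description of $move_h$ in Definition \ref{moveforthreecordinates}. The only difference is that you spell out the bookkeeping the paper leaves implicit, namely that the constraint $y \leq h(z)$ is preserved under each type of move and that the two sub-cases of $\min(y,h(w))$ match clauses $(iii)$ and $(iv)$ of Lemma \ref{fromPtoNforh}; this is a faithful elaboration, not a different argument.
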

\begin{proof}
Since  $move_h (\{x, y, z\})$ that is defined in Definition \ref{moveforthreecordinates} contains all the positions that can be reached from the position $\{x, y, z\}$ in one step, we have statements $(i)$ and $(ii)$ by Lemma \ref{fromPtoNforh} and Lemma \ref{fromNtoPforh} respectively.
\end{proof}

\begin{thm}\label{thmforPNposition}
Let $A_h$ and $B_h$ be the sets defined in Definition \ref{defofABk}.
$A_h$ is the set of $\mathcal{P}$-positions and $B_h$   is the set of $\mathcal{N}$-positions of the disjunctive sum of the chocolate bar  game with $CB(h,y,z)$ to the right of the bitter square and a single strip of chocolate bar  to the left.
\end{thm}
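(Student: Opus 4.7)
The plan is to deduce the theorem from Lemma~\ref{AtoBandBtoA} by the standard $\mathcal{P}/\mathcal{N}$-position induction. The disjunctive-sum game is well-founded because every option in $move_h$ strictly decreases at least one of the coordinates $x$, $y$, $z$ (when we reduce $z$ to $w$, the new second coordinate $\min(y,h(w))$ may fail to drop, but $z$ itself strictly decreases), so $x+y+z$ is a strict monovariant under play. I would therefore induct on $x+y+z$.

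For the base case, the unique position with $x=y=z=0$ is $\{0,0,0\}$; it lies in $A_h$ since $0 \oplus 0 \oplus 0 = 0$ and $0 \le h(0)$, and it admits no outgoing moves, so by Definition~\ref{NPpositions} it is a $\mathcal{P}$-position, consistent with the claim.

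For the inductive step, assume the classification has been established for every position with smaller coordinate sum. If $\{x,y,z\} \in A_h$, then part~$(i)$ of Lemma~\ref{AtoBandBtoA} says every option lies in $B_h$, and by the induction hypothesis every such option is an $\mathcal{N}$-position; hence $\{x,y,z\}$ is a $\mathcal{P}$-position. Conversely, if $\{x,y,z\} \in B_h$, then part~$(ii)$ of Lemma~\ref{AtoBandBtoA} supplies an option in $A_h$, which by induction is a $\mathcal{P}$-position; hence $\{x,y,z\}$ is an $\mathcal{N}$-position.

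The only technical point to verify is that $A_h \cup B_h$ is closed under $move_h$, so the induction hypothesis applies to every option. From Definition~\ref{moveforthreecordinates}, options of the form $\{u,y,z\}$ with $u<x$ and $\{x,v,z\}$ with $v<y$ preserve the invariant $y \le h(z)$, while the option $\{x,\min(y,h(w)),w\}$ with $w<z$ has its second coordinate bounded above by $h(w)$ by construction. Thus the invariant is automatic, and no further work is required. All the genuine combinatorial difficulty has already been absorbed into Lemma~\ref{fromNtoPforh} (the existence, from any position with nonzero nim-sum, of a move landing on nim-sum zero while respecting $y \le h(z)$), whose intricate bit-by-bit construction is the real main obstacle; by contrast, the present theorem is a routine $\mathcal{P}/\mathcal{N}$ induction stacked on top of that lemma.
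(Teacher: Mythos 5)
Your proof is correct and takes essentially the same route as the paper: both deduce the theorem from Lemma~\ref{AtoBandBtoA} together with termination of play at $\{0,0,0\}\in A_h$, the paper phrasing this as an informal alternating-play argument while you formalize it as induction on $x+y+z$. Your extra check that $y\le h(z)$ is preserved under $move_h$ (so that $A_h\cup B_h$ is closed under moves) is a point the paper leaves implicit, and it is verified correctly.
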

\begin{proof}
If we start the game from a position $\{x,y,z\}\in A_{h}$, then Lemma \ref{AtoBandBtoA} indicates that any option we take leads to a position $\{p,q,r\}$ in $B_h$. From this position  $\{p,q,r\}$, Lemma \ref{AtoBandBtoA}    
 implies that our opponent can choose a proper option that leads to a position in $A_h$. Note that any option reduces some of the numbers in the coordinates. In this way, our opponent can always reach a position in $A_h$, and will finally win by reaching $\{0,0,0\}\in A_{h}$. Note that position $\{0,0,0\}$ represent  the bitter square itself, and we cannot eat this part.
Therefore $A_h$ is the set of $\mathcal{P}$-positions.\\
If we start the game from a position $\{x,y,z\}\in B_{h}$, then Lemma \ref{AtoBandBtoA} means that we can choose a proper option that leads to a position  $\{p,q,r\}$ in $A_h$. 
From $\{p,q,r\}$, Lemma \ref{AtoBandBtoA} implies that  any option taken by our opponent leads to a position in $B_h$. In this way we win the game by reaching $\{0, 0, 0\}$.
Note that our opponent cannot eat the bitter part. Therefore $B_h$ is the set of $\mathcal{N}$-position
\end{proof}

\begin{thm}\label{theoregrundyforf}
Let $h$ be the function that satisfies the condition $(a)$ in Definition \ref{adefoffunch}. Then the Grundy number of $CB(h,y,z)$ is $ y \oplus z$.
\end{thm}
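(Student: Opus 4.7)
The plan is to derive the Grundy number of $CB(h,y,z)$ from Theorem \ref{thmforPNposition} by combining it with the Sprague--Grundy theorem. The key observation I will use is that a single strip of chocolate of length $x$ is nothing more than NIM on a heap of size $x$: inspecting Definition \ref{moveforthreecordinates}, the moves that alter the first coordinate are exactly the transitions $\{u,y,z\}$ for $u<x$, which matches the move set of NIM. Hence the strip has Grundy number $x$, and Theorem \ref{theoremofsumg}$(ii)$ yields
\[
G(\{x,y,z\}) \;=\; x \oplus G(\{y,z\})
\]
for the disjunctive sum of the strip and $CB(h,y,z)$.

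The next step is to specialise $x := y\oplus z$, so that $x\oplus y\oplus z = 0$. Since the starting position of $CB(h,y,z)$ satisfies $y \le h(z)$ by Definition \ref{defofbarwithfunc} (the top column of the bar has height $\min(h(z),y)+1$, so the coordinate $y$ is implicitly at most $h(z)$), the triple $\{x,y,z\}$ lies in the set $A_h$ of Definition \ref{defofABk}. Theorem \ref{thmforPNposition} identifies it as a $\mathcal{P}$-position, so Theorem \ref{theoremofsumg}$(i)$ gives $G(\{x,y,z\}) = 0$. Combined with the displayed equation, this forces $x \oplus G(\{y,z\}) = 0$, and therefore $G(\{y,z\}) = x = y\oplus z$.

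I do not expect any serious obstacle, since all the combinatorial content has already been absorbed into Theorem \ref{thmforPNposition}, whose proof relied on the arithmetic lemmas \ref{lemmaforfunctionh}--\ref{lemmaforfunctionh3} and the $\mathcal{P}/\mathcal{N}$-transition lemmas \ref{fromNtoPforh} and \ref{fromPtoNforh}. What remains for the proof of Theorem \ref{theoregrundyforf} is just the passage from the $\mathcal{P}$-position characterisation of the disjunctive sum to the Grundy value of $CB(h,y,z)$, which is a direct application of Sprague--Grundy together with the observation that the left-hand strip is NIM.
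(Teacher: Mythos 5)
Your proposal is correct and follows essentially the same route as the paper: both derive $G(\{y,z\})=y\oplus z$ from the $\mathcal{P}$-position characterisation of Theorem \ref{thmforPNposition} via the Sprague--Grundy theorem (Theorem \ref{theoremofsumg}), with the left strip acting as a NIM heap of size $x$. You merely spell out the intermediate steps (the strip's Grundy value, the identity $G(\{x,y,z\})=x\oplus G(\{y,z\})$, and the implicit hypothesis $y\le h(z)$) that the paper leaves tacit.
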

\begin{proof}
By Theorem  \ref{thmforPNposition},  a position $\{x,y,z\}$ of the sum of the chocolate bars  is a $\mathcal{P}$-position  when $x\oplus y\oplus z=0$. Therefore,  Theorem \ref{theoremofsumg} implies that the Grundy number of the chocolate bar to the right is $x$ $ = y\oplus z$.
\end{proof}
By Theorem \ref{theoregrundyforf}, the condition of Definition \ref{adefoffunch} is a sufficient condition for the chocolate bar $CB(h,y,z)$ to have the Grundy number $G(\{y,z\}) = y \oplus z$. Lemma \ref{lemmaforfloorzbyk}, Lemma \ref{lemmafor2overlog} and 
Theorem \ref{theoregrundyforf} imply that chocolate bar games in Figure \ref{referee1p}, Figure \ref{referee2p}, Figure \ref{referee3p} and Figure \ref{yzk4h2graph} have the Grundy number $G(\{y,z\}) = y \oplus z$.

In the next subsection we prove that the condition of Definition \ref{adefoffunch} is a necessary condition for the chocolate bar $CB(h,y,z)$ to have the Grundy number $G(\{y,z\}) = y \oplus z$.

\subsection{A Necessary Condition for a Chocolate Bar to have the Grundy Number $y \oplus z$}
In Subsection \ref{subsectionforsufficondi}, we proved that the Grundy number $G(\{y,z\}) = y \oplus z$ for $CB(h,y,z)$  when 
the function $h$ satisfies the condition $(a)$ in Definition \ref{adefoffunch}.

In this subsection, we prove that the condition $(a)$ in Definition \ref{adefoffunch} is a necessary condition for $f$ to  have the Grundy number $ y \oplus z$ for the chocolate bar   
 $CB(f,y,z)$. 
\begin{defn}\label{adefoff}
Let $f$ be a monotonically increasing function of $Z_{\geq0}$ into $Z_{\geq0}$ that satisfies the following condition $(a)$.\\
$(a)$ Suppose that $G(\{y,z\})$ is the Grundy number of the chocolate bar $CB(f,y,z)$. Then,  
\begin{equation}
G(\{y,z\}) = y \oplus z.\nonumber
\end{equation}
\end{defn}

Throughout this subsection we assume that the function $f$ satisfies the condition $(a)$ of Definition \ref{adefoff}, and 
we prove that this function $f$ satisfies the condition $(a)$ of Definition \ref{adefoffunch} using the following 
Lemma \ref{acomparegrundies}, Lemma \ref{agrundyequallemma} and Lemma \ref{asnlemma}.

\begin{Lem}\label{acomparegrundies}
Let $y,z,y^{\prime} \in Z_{\geq0}$ such that $y=f(z)$, $y^{\prime} \leq f(z+1)$ and $y < y^{\prime}$. Then, 
$G(\{y,z+1\}) < G(\{y^{\prime},z+1\})$.
\end{Lem}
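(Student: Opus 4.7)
The plan is to express $G(\{y,z+1\})$ and $G(\{y',z+1\})$ as $\textit{mex}$ values of two sets $S_1 \subseteq S_2$ of Grundy numbers of reachable positions, and then conclude by monotonicity of $\textit{mex}$ together with the observation that $y \ne y'$ forces the two $\textit{mex}$ values to be distinct. Using Definition \ref{moveofchocoh} and the standing assumption of this subsection that $G(\{a,b\}) = a \oplus b$ whenever $\{a,b\}$ is a valid position of the chocolate bar, the sets of Grundy values of positions reachable in one move from $\{y,z+1\}$ and from $\{y',z+1\}$ are
\begin{align*}
S_1 &= \{\,v \oplus (z+1) : 0 \le v < y\,\} \cup \{\,\min(y,f(w)) \oplus w : 0 \le w \le z\,\},\\
S_2 &= \{\,v \oplus (z+1) : 0 \le v < y'\,\} \cup \{\,\min(y',f(w)) \oplus w : 0 \le w \le z\,\}.
\end{align*}
Both $\{y,z+1\}$ and $\{y',z+1\}$ are valid because $y = f(z) \le f(z+1)$ and $y' \le f(z+1)$, so the assumption gives $\textit{mex}(S_1) = y \oplus (z+1)$ and $\textit{mex}(S_2) = y' \oplus (z+1)$.

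The decisive use of the hypothesis $y = f(z)$ is the following collapse: for every $w$ with $0 \le w \le z$, monotonicity of $f$ gives $f(w) \le f(z) = y < y'$, and hence $\min(y,f(w)) = \min(y',f(w)) = f(w)$. Consequently the ``horizontal-cut'' halves of $S_1$ and $S_2$ both equal $\{\,f(w) \oplus w : 0 \le w \le z\,\}$, while the ``vertical-cut'' half of $S_1$ is a subset of that of $S_2$ because $y < y'$; therefore $S_1 \subseteq S_2$. Standard $\textit{mex}$-monotonicity then yields $\textit{mex}(S_1) \le \textit{mex}(S_2)$ (every integer below $\textit{mex}(S_1)$ lies in $S_1 \subseteq S_2$ and so cannot equal $\textit{mex}(S_2)$), and strictness follows because $y \ne y'$ forces $y \oplus (z+1) \ne y' \oplus (z+1)$. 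This is exactly $G(\{y,z+1\}) < G(\{y',z+1\})$.

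There is no serious obstacle; the only point requiring care is the collapse of the two minima to $f(w)$, and this is precisely where the hypothesis $y = f(z)$ (rather than merely $y \le f(z)$) is used, since otherwise the inclusion $S_1 \subseteq S_2$ could fail at the largest values of $w$.
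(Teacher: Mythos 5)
Your proof is correct and follows essentially the same route as the paper's: both hinge on the collapse $\min(y,f(w))=\min(y^{\prime},f(w))=f(w)$ for $w\le z$ (valid because $y=f(z)$ and $f$ is monotone), which gives $move_f(\{y,z+1\})\subseteq move_f(\{y^{\prime},z+1\})$ and hence $G(\{y,z+1\})\le G(\{y^{\prime},z+1\})$ by $\textit{mex}$-monotonicity. The only divergence is the strictness step: you invoke the subsection's standing assumption $G(\{a,b\})=a\oplus b$ together with the injectivity of $\cdot\oplus(z+1)$, whereas the paper simply notes that $\{y,z+1\}\in move_f(\{y^{\prime},z+1\})$ forces $G(\{y^{\prime},z+1\})\ne G(\{y,z+1\})$ by the definition of $\textit{mex}$ -- a slightly more self-contained argument that does not need the $G=y\oplus z$ hypothesis at all, but both versions are valid in context.
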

\begin{proof}
Since $y=f(z)$, we have $f(w) \leq y < y^{\prime}$ for $w \leq z$. Therefore, 
\begin{align*}
&move_f(\{y^{\prime},z+1\}) \\
&= \{\{v,z+1 \}:v<y^{\prime} \} \cup  \{ \{\min(y^{\prime}, f(w)),w \}:w<z+1 \}\\
&= \{\{v,z+1 \}:v<y^{\prime} \} \cup  \{ \{f(w),w \}:w<z+1 \}\\
&=\{\{v,z+1\}: y \leq v < y^{\prime} \} \cup  \{\{v,z+1 \}:v<y \} \cup  \{ \{f(w),w \}:w<z+1 \}\\
&=\{\{v,z+1\}: y \leq v < y^{\prime} \} \cup  \{\{v,z+1 \}:v<y \} \cup  \{ \{\min(y, f(w)),w \}:w<z+1 \}\\
&=\{\{v,z+1\}: y \leq v < y^{\prime} \}  \cup move_f(\{y,z+1\}), \ where \ v,w \in Z_{\ge 0}.
\end{align*}

Therefore, 
\begin{align}
&G(\{y^{\prime},z+1\}) \nonumber \\
&= \textit{mex}(\{G(\{v,z+1\}):y \leq v < y^{\prime} \}  \cup \{G(\{a,b\}) \nonumber\\
&:\{a,b\} \in move_f(\{y,z+1\})\} \geq G(\{y,z+1\}). \label{grundylqgrundy}
\end{align}

Since $\{y,z+1\} \in move_f(\{y^{\prime},z+1\})$, $G(\{y^{\prime},z+1\}) \neq G(\{y,z+1\})$. Therefore, (\ref{grundylqgrundy}) implies  $G(\{y,z+1\}) < G(\{y^{\prime},z+1\})$.
\end{proof}

\begin{Lem}\label{agrundyequallemma}
For any $y,z \in Z_{\geq0}$ such that $y \leq f(z)$, we have
\begin{align*}
\{G(\{ \min(y,f(w)),w\}):w<z\} = \{y \oplus w:w < z\}.
\end{align*}
\end{Lem}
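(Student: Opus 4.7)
The hypothesis of Definition~\ref{adefoff} gives $G(\{a,b\}) = a \oplus b$ for every valid position $(a,b)$. Since $\min(y,f(w)) \leq f(w)$ makes $\{\min(y,f(w)),w\}$ a valid position of $CB(f,\min(y,f(w)),w)$, this simplifies each element of the LHS to $\min(y,f(w)) \oplus w$, so the claim reduces to the set identity
\[
\{\min(y,f(w)) \oplus w : w < z\} = \{y \oplus w : w < z\}.
\]
I would prove this by strong induction on $z$; the base $z=0$ is vacuous.

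For the inductive step, fix $y \leq f(z)$ and split into two cases. The easy case is $y \leq f(z-1)$: then $\min(y,f(z-1)) = y$, so the $w=z-1$ contributions on both sides equal $y \oplus (z-1)$, while the $w < z-1$ contributions match by the inductive hypothesis applied at $(y, z-1)$. The hard case is $f(z-1) < y \leq f(z)$, where $f$ jumps at $z$. Monotonicity of $f$ forces $\min(y,f(w)) = f(w)$ for every $w < z$, so the LHS becomes $\{f(w) \oplus w : w < z\}$. Applying the inductive hypothesis at $(f(z-1), z-1)$ and using monotonicity again, we obtain $\{f(w) \oplus w : w < z-1\} = \{f(z-1) \oplus w : w < z-1\}$; adjoining the $w=z-1$ element $f(z-1) \oplus (z-1)$ on both sides yields LHS $= \{f(z-1) \oplus w : w < z\}$. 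Setting $c := y \oplus f(z-1) > 0$, the lemma thus reduces to the closure statement: $\{0,1,\ldots,z-1\}$ is invariant under XOR with $c$.

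To establish this closure, I would invoke Lemma~\ref{acomparegrundies} with the substitutions $z \mapsto z-1$, $y \mapsto f(z-1)$, and $y'$ ranging over $(f(z-1), f(z)]$; combined with the Grundy hypothesis this gives $f(z-1) \oplus z < y' \oplus z$ for every such $y'$. Unpacked bit-wise, each such inequality forces the leading bit of $y' \oplus f(z-1)$ to lie at a zero bit of $z$. The main obstacle is the bit-chase: by varying $y'$ through the full interval $(f(z-1), f(z)]$ and exploiting monotonicity of $f$, one must conclude that every bit in which $y$ and $f(z-1)$ differ lies strictly below the lowest $1$-bit $j$ of $z$, so that $c < 2^j$. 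Once this is granted, closure is immediate --- since $z$'s bits below $j$ vanish, XOR with $c$ only perturbs those low bits, leaving the high-bit part $\lfloor w/2^j \rfloor < \lfloor z/2^j \rfloor$ unchanged and giving $w \oplus c < z$ for every $w < z$, as required.
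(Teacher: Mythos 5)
Your reduction is sound up to its last step: the passage from Grundy values to nim-sums, the induction on $z$, the easy case $y\le f(z-1)$, and the observation that the hard case $f(z-1)<y\le f(z)$ reduces to showing that $\{0,1,\dots,z-1\}$ is closed under XOR with $c=y\oplus f(z-1)$, equivalently $c<2^j$ where $2^j$ is the lowest set bit of $z$, are all correct. The genuine gap is the ``bit-chase'' you flag as the main obstacle: the inequalities you extract from Lemma \ref{acomparegrundies}, namely $f(z-1)\oplus z<y'\oplus z$ for all $y'\in(f(z-1),f(z)]$, together with monotonicity of $f$, provably do \emph{not} imply $c<2^j$. Concretely, take $z=4$ (so $j=2$), $f(0)=\dots=f(3)=4$, $f(w)=8$ for $w\ge4$, and $y=8$: then $f(3)\oplus 4=0$, so every inequality $0<y'\oplus4$ for $y'\in\{5,6,7,8\}$ holds, and Lemma \ref{acomparegrundies} at every other height gives an empty family of constraints; yet $c=4\oplus8=12\ge2^2$, and indeed $\{f(3)\oplus w:w<4\}=\{4,5,6,7\}\ne\{8,9,10,11\}=\{8\oplus w:w<4\}$. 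Such an $f$ of course violates the global hypothesis $G=\oplus$, but it satisfies every consequence of that hypothesis your argument actually invokes, so no bit-manipulation from those facts alone can finish the case. What you really need at this point is $\lfloor f(z)/2^{j}\rfloor=\lfloor f(z-1)/2^{j}\rfloor$, which is exactly condition $(a)$ of Definition \ref{adefoffunch} applied to the pair $(z,z-1)$ with $i=j+1$ --- but that is the conclusion of Theorem \ref{necessarycondiforf}, whose proof relies on the present lemma, so importing it would be circular.

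The paper's own proof is a short direct argument of a different flavour, and you may prefer to adopt its mechanism rather than rescue the bit-chase. Fix $w<z$ and choose $n$ with $2^n>\max(y,z)$; the position $\{y,z+2^n\}$ has Grundy value $y\oplus(z+2^n)\ge2^n>y\oplus w$, so by the mex recursion some option of $\{y,z+2^n\}$ has Grundy value exactly $y\oplus w$. The hypothesis $G=\oplus$ rules out options that lower $y$ (their values are at least $2^n$) and options $\{\min(y,f(w')),w'\}$ with $w'\ge z$ (there $f(w')\ge f(z)\ge y$, giving value $y\oplus w'$ with $w'\ne w$), leaving only options $\{\min(y,f(w')),w'\}$ with $w'<z$. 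This gives the inclusion $\{y\oplus w:w<z\}\subseteq\{G(\{\min(y,f(w)),w\}):w<z\}$, and comparing cardinalities ($z$ elements on the left, at most $z$ on the right) yields equality with no induction and no case analysis on where $f$ jumps.
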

\begin{proof}
Let $w \in Z_{\geq0}$ such that $w< z$, and let
\begin{equation}\label{2nbiggerywz}
n = \lfloor log_2 \max(y,z) \rfloor +1.
\end{equation}
Then, Equation (\ref{2nbiggerywz}) implies that 
$y \oplus w < y \oplus (z+2^n) = G(\{y,z+2^n\})$.
By the definition of Grundy number, there exist $a,b \in Z_{\geq0}$ such that
$\{a,b\} \in move_f(\{y,z+2^n\})$ and $G(\{a,b\})=y \oplus w$.\\
By the definition of $move_f$, we have the following 
Equation (\ref{bydefofmove1}) or Equation (\ref{bydefofmove2}).
\begin{equation}\label{bydefofmove1}
G(\{ \min(y,f(w^{\prime})), w^{\prime} \}) = y \oplus w
\end{equation}
for $w^{\prime} \in Z_{\geq0}$ with $w^{\prime} < z+2^n$.

\begin{equation}\label{bydefofmove2}
y^{\prime} \oplus( z+2^n) = G(\{y^{\prime}, z+2^n\}) = y \oplus w
\end{equation}
for $y^{\prime} \in Z_{\geq0}$ with $y^{\prime} < y$.
Equation (\ref{bydefofmove2}) contradicts Equation (\ref{2nbiggerywz}), and hence we have
Equation (\ref{bydefofmove1}).
If 
\begin{equation}\label{ifsentence}
w^{\prime} \geq z, 
\end{equation}
then $f(w^{\prime}) \geq f(z) \geq y$. Hence,
\begin{equation}\label{grundyyszp}
G(\{ \min(y,f(w^{\prime})), w^{\prime} \}) =  G(\{y,w^{\prime} \}) = y \oplus w^{\prime}.
\end{equation}
By Equations (\ref{bydefofmove1}) and (\ref{grundyyszp}), we have 
\begin{equation}\label{leadcontra1}
y \oplus w = y \oplus w^{\prime}.
\end{equation}
Since $w^{\prime} < w$, Equation (\ref{leadcontra1}) leads to a contradiction. Therefore, the inequality in (\ref{ifsentence}) is false, and we have
$w^{\prime} < z$. Hence, Equation (\ref{bydefofmove1}) implies that $\{y \oplus w:w < z\} \subset \{G(\{ \min(y,f(w)),w\}):w<z\}$. The number of elements in $\{y \oplus w:w < z\}$ is the same as the number of elements in  $ \{G(\{ \min(y,f(w)),w\}):w<z\}$, and hence we have $\{G(\{ \min(y,f(w)),w\}):w<z\} = \{y \oplus w:w < z\}$.
\end{proof}

\begin{Lem}\label{asnlemma}
Let 
\begin{equation}
a= d \times 2^{i+1} + d_i 2^i + e-1 \nonumber
\end{equation}
for $d,e,i \in Z_{\geq 0} $, $d_i \in \{0,1\}$, $e < 2^i$ and $0 < d_i 2^i + e$.\\
If $c \times 2^{i+1} \leq f(a) < c \times 2^{i+1} + 2^i$ for $c \in  Z_{\geq 0}$, then 
$f(a+1) < c \times 2^{i+1} + 2^i$.
\end{Lem}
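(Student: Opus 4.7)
My plan is to argue by contradiction. First, I would assume $f(a+1)\ge c\cdot 2^{i+1}+2^i$ and set $y_0:=c\cdot 2^{i+1}+2^i$. The central step is to apply Lemma \ref{agrundyequallemma} twice at $z=a+1$: once with $y=y_0$ (valid because $y_0\le f(a+1)$ by the contradiction hypothesis) and once with $y=f(a)$ (valid because $f(a)\le f(a+1)$ by monotonicity). In both applications, the chain of inequalities $f(w)\le f(a)<y_0$ for $w\le a$ forces $\min(y,f(w))=f(w)$, and the standing assumption $G(\{y',z'\})=y'\oplus z'$ then turns the lemma's conclusion into
\[\{f(w)\oplus w:0\le w\le a\}=\{y_0\oplus w:0\le w\le a\}=\{f(a)\oplus w:0\le w\le a\}.\]

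Next, I would drop the middle set and rephrase the resulting outer equality. Each remaining set can be parametrised as $\{u:u\oplus y_0\in[0,a]\}$ and $\{u:u\oplus f(a)\in[0,a]\}$ respectively, and these two sets agree if and only if $[0,a]$ is invariant under the map $v\mapsto v\oplus k$, where $k:=f(a)\oplus y_0$. Writing $f(a)=c\cdot 2^{i+1}+f'$ with $0\le f'<2^i$ (which is possible because $c\cdot 2^{i+1}\le f(a)<c\cdot 2^{i+1}+2^i$), a short bit-level calculation gives $k=2^i+f'\in[2^i,2^{i+1})$.

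The last step is to exhibit a $w^*\in[0,a]$ with $w^*\oplus k\notin[0,a]$, directly violating the invariance. I would split on the bit $d_i$. If $d_i=0$ (so that $e\ge 1$ by the hypothesis $d_i 2^i+e>0$), then $w^*=d\cdot 2^{i+1}$ lies in $[0,a]$, and $w^*\oplus k=d\cdot 2^{i+1}+2^i+f'>a=d\cdot 2^{i+1}+e-1$ because $2^i+f'\ge 2^i>e-1$. If $d_i=1$, I would take instead $w^*=d\cdot 2^{i+1}+((2^i-1)\oplus f')$, whose low part is at most $2^i-1\le 2^i+e-1$, so $w^*\le a$; the XOR $w^*\oplus k$ simplifies to $(d+1)\cdot 2^{i+1}-1$, which exceeds $a=d\cdot 2^{i+1}+2^i+e-1$ because $e<2^i$. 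In either case the invariance fails, producing the required contradiction. The principal obstacle, I expect, is recognising that two applications of Lemma \ref{agrundyequallemma} at a common $z$ but different $y$ can be combined to yield the XOR-invariance of an interval; once that identity is in hand, the witnessing $w^*$ follows from the short two-line case split above.
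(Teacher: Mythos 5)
Your proof is correct, but it takes a genuinely different route from the paper's. The paper also argues by contradiction from $f(a+1)\ge c\times 2^{i+1}+2^i$, but it splits on $d_i$ at the outset and uses a different mechanism in each case: for $d_i=1$ it computes $G(\{c\times 2^{i+1}+2^i,a+1\})$ and $G(\{f(a),a+1\})$ directly as nim-sums and contradicts the strict monotonicity supplied by Lemma \ref{acomparegrundies}; for $d_i=0$ it observes that $G(\{c\times 2^{i+1}+2^i,a+1\})>(c\oplus d)2^{i+1}+2^i$, so by the definition of mex some option must realise the Grundy value $(c\oplus d)2^{i+1}+2^i$, and then enumerates the four families of options (invoking Lemma \ref{agrundyequallemma} once) to show none does. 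You instead apply Lemma \ref{agrundyequallemma} twice at the common $z=a+1$ --- with $y=c\times 2^{i+1}+2^i$, licensed precisely by the contradiction hypothesis, and with $y=f(a)$, licensed by monotonicity --- note that both left-hand sides collapse to $\{G(\{f(w),w\}):w\le a\}$ since $f(w)\le f(a)<c\times 2^{i+1}+2^i$ for $w\le a$, and so obtain $\{(c\times 2^{i+1}+2^i)\oplus w: w\le a\}=\{f(a)\oplus w: w\le a\}$, i.e.\ the invariance of $[0,a]$ under XOR with $k=f(a)\oplus(c\times 2^{i+1}+2^i)\in[2^i,2^{i+1})$; the case split on $d_i$ is deferred to the final, purely combinatorial choice of a witness $w^*$ with $w^*\le a<w^*\oplus k$, and both of your witnesses check out (including the boundary situations $e=0$ when $d_i=1$, and $i=0$). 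What your approach buys is uniformity: Lemma \ref{acomparegrundies} and the option-by-option enumeration are not needed at all, and the game-theoretic content is isolated in a single clean identity. What the paper's version buys is that each case is an elementary direct computation with explicit Grundy values. The only expository nit is your phrase ``drop the middle set'': the set you discard from the three-way chain is $\{f(w)\oplus w:w\le a\}$, which is the common intermediary rather than the literal middle term of your display.
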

\begin{proof}
Let
\begin{equation}\label{asnlemma1}
f(a) = c \times 2^{i+1} + t
\end{equation}
for $0 \leq t < 2^i$.
We suppose that 
\begin{equation}\label{aleadtocont}
f(a+1) \geq c \times 2^{i+1} + 2^i, 
\end{equation}
and we show that this leads to a contradiction.\\
\underline{Case $(i)$} If $d_i = 1$, then \\
$G(\{c \times 2^{i+1}+2^i,a+1\}) = (c \times 2^{i+1} + 2^i) \oplus (d \times 2^{i+1} + d_i 2^i + e)$ \\  
\begin{equation}\label{contradlemma35}
= (c \oplus d) 2^{i+1} + e < (c \oplus d) 2^{i+1} + d_i 2^i + (t\oplus e) = G(\{c \times 2^{i+1} + t, a +1\}).
\end{equation}
By Equation (\ref{asnlemma1}), the inequality in (\ref{aleadtocont}) and Lemma \ref{acomparegrundies}, we have 
$G(\{c \times 2^{i+1} + t, a +1\})< G(\{c \times 2^{i+1}+2^i,a+1\})$, which contradicts Equation (\ref{contradlemma35}).\\
\underline{Case $(ii)$} If $d_i = 0$, then 
$G(\{c \times 2^{i+1}+2^i,a+1\}) = (c \times 2^{i+1} + 2^i) \oplus (d \times 2^{i+1}  + e)$ \\ $ = (c \oplus d) 2^{i+1} + 2^i + e > (c \oplus d) 2^{i+1} + 2^i $. Note that $e > 0$, since $d_i2^i + e > 0$ and $d_i = 0$.

Therefore, by the definition of Grundy number we have 
\begin{equation}\label{asnlemma2}
(c \oplus d) 2^{i+1} + 2^i \in \{G(\{p,q\}):\{p,q\} \in move_f(\{c \times 2^{i+1}+2^i,a+1\})\}.
\end{equation}

$\{G(\{p,q\}):\{p,q\} \in move_f(\{c \times 2^{i+1}+2^i,a+1\})\}$
\begin{align}
= \{G(\{v,d \times 2^{i+1}+e\}):v = 0,1,2,...,c \times 2^{i+1}+2^i-1\} \nonumber\\
\cup \{G(\{ \min(c \times 2^{i+1}+2^i,f(w)),w \}):w = 0,1,2,...,d \times 2^{i+1}+e-1 \}. \label{atransgrundy}
\end{align}
Note that $a=d \times 2^{i+1}+e-1$.

For $w \leq a$, we have $f(w) \leq f(a) = c\times 2^{i+1} + t$. Hence 

(\ref{atransgrundy})
\begin{align}
 = \{G(\{v,d \times 2^{i+1}+e\}):v = 0,1,2,...,c \times 2^{i+1}+2^i-1\} \nonumber\\
\cup \{G(\{ \min(c \times 2^{i+1}+t,f(w)),w \}):w = 0,1,2,...,d \times 2^{i+1}+e-1 \}.\label{lastofequation}
\end{align}
Since $c \times 2^{i+1}+t=f(a)$ $\leq f(a+1)=f(d \times 2^{i+1}+e)$, Lemma \ref{agrundyequallemma} implies that 
$\{G(\{ \min(c \times 2^{i+1}+t,f(w)),w \}):w = 0,1,2,...,d \times 2^{i+1}+e-1 \} = \{(c \times 2^{i+1}+t) \oplus w: w = 0,1,2,...,d \times 2^{i+1}+e-1 \}$. Therefore, by  Definition \ref{adefoff} 

(\ref{lastofequation})
\begin{align}
 = \{v \oplus (d \times 2^{i+1}+e): v = 0,1,2,...,c \times 2^{i+1}+2^i-1\} \nonumber\\
\cup \{(c \times 2^{i+1}+t) \oplus w: w = 0,1,2,...,d \times 2^{i+1}+e-1 \} \nonumber
\end{align}
\begin{align}
= \{(c \times 2^{i+1}+k)\oplus(d \times 2^{i+1}+e):k = 0,1,2,...,2^i-1\}\label{agrundyse1} \\
\cup \{k \oplus (d \times 2^{i+1}+e):k = 0,1,2,...,c \times 2^{i+1}-1\}\label{agrundyse2} \\
\cup \{ (c \times 2^{i+1}+t) \oplus (d \times 2^{i+1}+k):k = 0,1,2,...,e-1\}\label{agrundyse3} \\
\cup \{  (c \times 2^{i+1}+t) \oplus k:k = 0,1,2,...,d \times 2^{i+1}-1 \}.\label{agrundyse4}
\end{align}
Then we have the following statements $(i)$, $(ii)$, $(iii)$ and $(iv)$.\\
$(i)$ All the numbers in Set (\ref{agrundyse1}) are of the type $(c\oplus d)2^{i+1} + (k \oplus e)$, and hence this set does not contains $(c \oplus d) 2^{i+1} + 2^i$. Note that $k,e < 2^i$.\\
$(ii)$ The coefficients of $2^{i+1}$ of the numbers in Set (\ref{agrundyse2}) are not $c\oplus d$, and hence this set does not contains $(c \oplus d) 2^{i+1} + 2^i$.\\
$(iii)$ All the numbers in Set (\ref{agrundyse3}) are of the type $(c\oplus d)2^{i+1} + (t \oplus k)$, and hence this set does not contains $(c \oplus d) 2^{i+1} + 2^i$.\\ Note that $k \leq e-1 < 2^i$ and $t < 2^i$.\\
$(iv)$ The coefficients of $2^{i+1}$ of the numbers in Set (\ref{agrundyse4}) are not $c\oplus d$, and hence this set does not contains $(c \oplus d) 2^{i+1} + 2^i$.\\
Statements $(i)$, $(ii)$, $(iii)$ and $(iv)$ contradict Relation (\ref{asnlemma2}). Therefore we conclude that the inequality in (\ref{aleadtocont}) is false.
\end{proof}

\begin{thm}\label{necessarycondiforf}
Suppose that the function $f$ satisfies the condition $(a)$ in Definition \ref{adefoff}.\\
Then function $f$ satisfies the condition $(a)$ in Definition \ref{adefoffunch}.
\end{thm}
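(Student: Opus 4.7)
The plan is to prove Theorem \ref{necessarycondiforf} by reducing Definition \ref{adefoffunch}(a) to a \emph{local} claim about consecutive arguments, and then applying Lemma \ref{asnlemma} at an index dictated by the binary expansion of $f(a)$. The local claim is: for every natural number $i$ and every $a \in Z_{\geq 0}$ with $a+1 \not\equiv 0 \pmod{2^i}$ (equivalently, $a$ and $a+1$ lie in a common $2^i$-block), we have $\lfloor f(a)/2^{i-1}\rfloor = \lfloor f(a+1)/2^{i-1}\rfloor$. Once this local claim is in hand, for any $z \leq z'$ in a common $2^i$-block every consecutive pair $a, a+1$ with $z \leq a < z'$ also shares that $2^i$-block, so chaining the local equalities yields $\lfloor f(z)/2^{i-1}\rfloor = \lfloor f(z')/2^{i-1}\rfloor$, which is precisely Definition \ref{adefoffunch}(a).

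To prove the local claim, fix such $a$ and $i$, and let $J$ be the smallest integer with $J \geq i-1$ whose corresponding binary digit of $f(a)$ is $0$; such $J$ exists because $f(a)$ is finite. Writing $c = \lfloor f(a)/2^{J+1}\rfloor$, we have $c \cdot 2^{J+1} \leq f(a) < c \cdot 2^{J+1} + 2^J$. Since $J+1 \geq i$, the hypothesis $a+1 \not\equiv 0 \pmod{2^i}$ forces $a+1 \not\equiv 0 \pmod{2^{J+1}}$, so $a$ admits the decomposition required by Lemma \ref{asnlemma} with its index taken to be $J$. Applying the lemma gives $f(a+1) < c \cdot 2^{J+1} + 2^J$, and together with monotonicity of $f$ this traps $f(a+1)$ in the interval $[c \cdot 2^{J+1}, c \cdot 2^{J+1} + 2^J)$.

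From here, reading off bits finishes the proof. If $J = i-1$, then both $f(a)$ and $f(a+1)$ already lie in $[c \cdot 2^i, c \cdot 2^i + 2^{i-1})$, so their floor-divisions by $2^{i-1}$ both equal $2c$. Otherwise $J > i-1$, and minimality of $J$ forces bits $i-1, i, \ldots, J-1$ of $f(a)$ to all equal $1$, yielding the refined lower bound $f(a) \geq c \cdot 2^{J+1} + 2^J - 2^{i-1}$. Combining it with $f(a+1) < c \cdot 2^{J+1} + 2^J$ confines $f(a+1)$ (and $f(a)$) to a window of length $2^{i-1}$ whose left endpoint $c \cdot 2^{J+1} + 2^J - 2^{i-1}$ is a multiple of $2^{i-1}$; on such a window $\lfloor \cdot /2^{i-1}\rfloor$ is constant, so $\lfloor f(a+1)/2^{i-1}\rfloor = \lfloor f(a)/2^{i-1}\rfloor$.

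The main obstacle is that Lemma \ref{asnlemma} only controls $f$ when $f(a)$ sits in the lower half of its $2^{J+1}$-block; it is silent when bit $J$ of $f(a)$ is already $1$. The climb-and-trap device above is the remedy: climb the binary expansion of $f(a)$ starting from position $i-1$ upward until the first $0$-bit is reached, invoke Lemma \ref{asnlemma} there, and then exploit monotonicity of $f$ to funnel $f(a+1)$ into a narrow $2^{i-1}$-aligned window that forces agreement of all bits at or above position $i-1$.
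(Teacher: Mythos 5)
Your proof is correct, and it rests on the same engine as the paper's --- Lemma \ref{asnlemma} --- but the surrounding architecture is genuinely different. The paper argues by contradiction: it picks a violating pair $z<z'$, locates the most significant bit position $i$ at which $f(z)$ and $f(z')$ differ (which is automatically a $0$-bit of $f(z)$, so the ``lower half of the $2^{i+1}$-block'' hypothesis of Lemma \ref{asnlemma} comes for free), and then pins down a single critical transition point $a=\max\{z: f(z)<c\cdot 2^{i+1}+2^i\}$ at which the lemma is contradicted. You instead prove a direct local statement for every consecutive pair $a,a+1$ in a common $2^i$-block and chain it; the price is that bit $i-1$ of $f(a)$ need not be $0$, and your climb to the first $0$-bit $J\geq i-1$, followed by the trapping of $f(a+1)$ in the $2^{i-1}$-aligned window $[\,c\cdot 2^{J+1}+2^J-2^{i-1},\,c\cdot 2^{J+1}+2^J)$, is exactly the right remedy. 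I checked the delicate points: the decomposition of $a+1$ required by Lemma \ref{asnlemma} at level $J$ exists precisely when $2^{J+1}\nmid(a+1)$, which your hypothesis $2^i\nmid(a+1)$ and $J+1\geq i$ guarantee; the refined lower bound $f(a)\geq c\cdot 2^{J+1}+2^J-2^{i-1}$ does follow from minimality of $J$; and monotonicity of $f$ (part of Definition \ref{adefoff}) closes the window. What your route buys is a cleaner, quantifier-light argument that avoids the paper's bookkeeping with the extremal $a$ and the inequalities (4.20)--(4.32), and it isolates a reusable local fact (``$f$ cannot cross a $2^{i-1}$-aligned midpoint except when the argument crosses a multiple of $2^i$''); what it costs is the extra case split on $J=i-1$ versus $J>i-1$, which the paper's global choice of bit position sidesteps.
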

\begin{proof}
If $f$ does not satisfy the condition in Definition \ref{adefoffunch}, then there exist $z,z^{\prime} \in Z_{\geq 0}$ and a natural number $j$ such that 
$z < z^{\prime}$, 
\begin{equation}\label{donotsatisfy1}
\lfloor \frac{z}{2^j}\rfloor = \lfloor \frac{z^{\prime}}{2^j}\rfloor
\end{equation}
and 
\begin{equation}\label{donotsatisfy2}
\lfloor \frac{f(z)}{2^{j-1}}\rfloor < \lfloor \frac{f(z^{\prime})}{2^{j-1}}\rfloor.
\end{equation}
By Equation (\ref{donotsatisfy1}), there exist  $z_k, z^{\prime}_k \in Z_{\geq 0}$ for $k =0,1,2,...,n$ such that 
$ z=\sum\limits_{k = 0}^n {{z_k}} {2^k}$ and 
$z^{\prime} = \sum\limits_{k = j}^n {{z_k}} {2^k}+\sum\limits_{k = 0}^{j-1} {{z_k^{\prime}}} {2^k}$.
By the inequality in (\ref{donotsatisfy2}), there exist $y_k, y_k^{\prime} \in Z_{\geq 0}$ for $k =0,1,2,...,n$ and a natural number $i \geq j-1$ such that
$y_i = 0 < 1 = y_i^{\prime}$,
\begin{equation}\label{atheoremforhf}
	f(\sum\limits_{k = 0}^n {{z_k}} {2^k})=  \sum\limits_{k = i+1}^{n} {{y_k}} {2^k}  + y_i \times 2^{i} + \sum\limits_{k = 0}^{i-1} {{y_k}} {2^k}
\end{equation}
and 
\begin{equation}\label{atheoremforhf2}
f(\sum\limits_{k = j}^n {{z_k}} {2^k}+\sum\limits_{k = 0}^{j-1} {{z_k^{\prime}}} {2^k}) = \sum\limits_{k = i+1}^{n} {{y_k}} {2^k}+ y_i^{\prime} \times 2^i + \sum\limits_{k = 0}^{i-1} {{y_k^{\prime}}} {2^k}.
\end{equation}

Let $c =\sum\limits_{k = i+1}^{n} {{y_k}} {2^{k-(i+1)}} $. Then  $c \times 2^{i+1}$  $= \sum\limits_{k = i+1}^{n} {{y_k}} {2^k}$. Hence Equations  (\ref{atheoremforhf}) and (\ref{atheoremforhf2}) imply that 
\begin{equation}\label{atheoremforhfp2}
f(\sum\limits_{k = 0}^n {{z_k}} {2^k})=  c \times 2^{i+1} + 0 \times 2^{i} + \sum\limits_{k = 0}^{i-1} {{y_k}} {2^k}
\end{equation}
and 
\begin{equation}\label{atheoremforhfp2b}
f(\sum\limits_{k = j}^n {{z_k}} {2^k}+\sum\limits_{k = 0}^{j-1} {{z_k^{\prime}}} {2^k}) = c \times 2^{i+1}+ 2^i + \sum\limits_{k = 0}^{i-1} {{y_k^{\prime}}} {2^k}.
\end{equation}
Let 
\begin{equation}\label{definitionofvaa}
a = \max(\{z: f(z) < c \times 2^{i+1} + 2^i\})
\end{equation}
and 
\begin{equation}\label{definitionofvab}
b =  \min(\{z: f(z) \geq  c \times 2^{i+1} + 2^i\}).
\end{equation}
Then,  $b = a+1$ follows directly from (\ref{definitionofvaa}) and (\ref{definitionofvab}).

Let $d =\sum\limits_{k = i+1}^{n} {{z_k}} {2^{k-(i+1)}} $. Then,  $d \times 2^{i+1}$  $= \sum\limits_{k = i+1}^{n} {{z_k}} {2^k}$.
By (\ref{atheoremforhfp2}), $f(d \times 2^{i+1})$ $ \leq f(\sum\limits_{k = 0}^{n} {{z_k}} {2^k})$ $<c \times 2^{i+1}+2^i$, and hence Equation (\ref{definitionofvaa}) implies 
\begin{equation}\label{alargedm2}
a \geq \sum\limits_{k = 0}^n {{z_k}} {2^k} \geq d \times 2^{i+1}.
\end{equation}

By $i+1 \geq j$, we have 
 $(d+1) \times 2^{i+1} $ $=\sum\limits_{k = i+1}^{n} {{z_k}} {2^k}+2^{i+1} $ 
 \begin{equation}\label{starAinequalities}
  > \sum\limits_{k = j}^n {{z_k}} {2^k}+\sum\limits_{k = 0}^{j-1} {{z_k^{\prime}}} {2^k}.
 \end{equation}

By Equation (\ref{atheoremforhfp2b}) and Equqtion (\ref{definitionofvab})
\begin{equation}\label{zkzprimkbigthb}
\sum\limits_{k = j}^n {{z_k}} {2^k}+\sum\limits_{k = 0}^{j-1} {{z_k^{\prime}}} {2^k} \geq b.
\end{equation}
The inequality in (\ref{starAinequalities}) and the inequality in (\ref{zkzprimkbigthb}) imply 
\begin{equation}\label{zkzprimkbigthb2}
(d+1) \times 2^{i+1} > b = a+1.
\end{equation}
The inequality in (\ref{alargedm2}) implies 
\begin{equation}\label{zkzprimkbigthb3}
a+1 >d \times 2^{i+1}.
\end{equation} 

By the inequality in (\ref{zkzprimkbigthb2}) and the inequality in (\ref{zkzprimkbigthb3})
$(d+1) \times 2^{i+1} > b = a+1 > d \times 2^{i+1}$, and hence there exist $d_i$ and $e$ such that $e < 2^i$, $0<d_i 2^i + e$ and 
\begin{equation}\label{expressa1byd2die}
a+1 = d \times 2^{i+1} + d_i2^i + e.
\end{equation}

By Equation (\ref{atheoremforhfp2}) the inequality in (\ref{alargedm2})
\begin{equation}\label{finalcondition1}
f(a) \geq f(\sum\limits_{k = 0}^n {{z_k}} {2^k})  \geq c \times 2^{i+1}.
\end{equation}

Equation (\ref{definitionofvaa}) implies 
\begin{equation}\label{finalcondition2}
f(a) < c \times 2^{i+1} + 2^i
\end{equation}
and 
\begin{equation}\label{finalcondition3}
f(a+1) \geq c \times 2^{i+1} + 2^i.
\end{equation}
Equation (\ref{expressa1byd2die}), the inequality in (\ref{finalcondition1}), the inequality in (\ref{finalcondition2}) and the inequality in (\ref{finalcondition3}) contradict Lemma \ref{asnlemma}. Therefore the function $f$ satisfies the condition of Definition \ref{adefoffunch}.
\end{proof}
By Theorem \ref{necessarycondiforf}, the condition $(a)$ in Definition \ref{adefoffunch} is a necessary condition for $CB(f,y,z)$ to have the 
Grundy number $G(\{y,z\})=y \oplus z$.

\section{A Chocolate Game $CB(f_s,y,z)$ whose Grundy number is ${\bf G_{f_s}(\{y,z\})}$ \\
${\bf = (y \oplus (z+s))-s }$ for a fixed natural number s}
In the previous sections we studied the chocolate bar $CB(f,y,z)$ whose Grundy number is $G(\{y,z\})=y \oplus z$. The condition $G(\{y,z\})=y \oplus z$ is very strong, so we modified it and get the condition that 
$G(\{y,z\}) = (y \oplus (z+s))-s$ for a fixed natural number $s$.

In this section we study a necessary and sufficient condition for a chocolate bar $CB(g,y,z)$ to have the Grundy number $G_g(\{y,z\}) = (y \oplus (z+s))-s$ for a fixed natural number $s$. 

\begin{exam}\label{exampofnimsumchos}
By Lemma \ref{lemmaforfloorzbyk} and Theorem \ref{theoregrundyforf}, the Grundy number of the chocolate bar in Figure \ref{choco832k4ch} is 
\begin{equation}\label{Gfyoplusz}
G_f(\{y,z\})=y \oplus z, 
\end{equation}
where 
\begin{equation}\label{fequalfloortby4}
f(t) = \lfloor \frac{t}{4}\rfloor,
\end{equation}
but Theorem \ref{grundyminuss} (We prove this theorem later in this paper.) implies that the Grundy number of the chocolate bar in Figure \ref{choco823k4p12} is 
\begin{equation}\label{Gfyopluszplus12}
G_{f_{12}}(\{y,z\})=(y \oplus (z+12))-12,
\end{equation}
where 
\begin{equation}\label{fequalfloortby4plus12}
f_{12}(t)=f(t+12)= \lfloor \frac{t+12}{4} \rfloor.
\end{equation}
Please look at the difference between (\ref{Gfyoplusz}) and (\ref{Gfyopluszplus12}), and the 
difference between (\ref{fequalfloortby4}) and (\ref{fequalfloortby4plus12}).

It is easy to see that we get the chocolate bar  in Figure \ref{choco823k4p12} by moving the bitter part horizontally and cutting the chocolate bar  in Figure \ref{choco832k4ch} vertically. We present this method in Figure \ref{choco823k4bb}.

If we generalize this method, we can get a necessary and sufficient condition for a chocolate bar $CB(g,y,z)$ to have the Grundy number $G_g(\{y,z\}) = (y \oplus (z+s))-s$ for a fixed natural number $s$. 

\begin{figure}[!htb]
	\begin{minipage}[!htb]{0.45\columnwidth}
		\centering
		\includegraphics[width=0.9\columnwidth,bb=0 0 278 70]{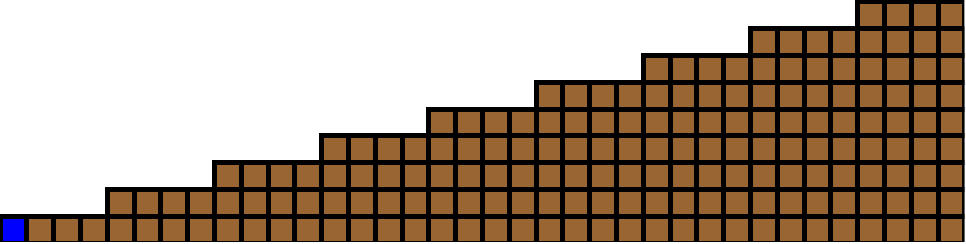}
		\caption{$CB(f,8,32)$ $f(t)$  $= \lfloor \frac{t}{4}\rfloor$.}
		\label{choco832k4ch}
	\end{minipage}
	\begin{minipage}[!htb]{0.45\columnwidth}
		\centering
		\includegraphics[width=0.9\columnwidth,bb=0 0 180 68]{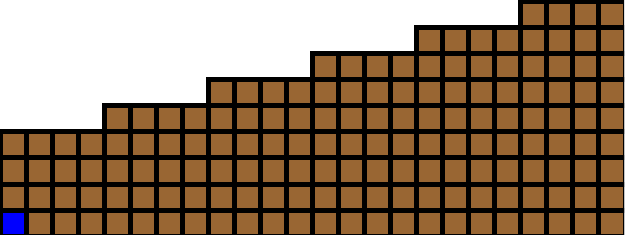}
		\caption{$CB(f_{12},8,23)$ $f_{12}(t)=f(t+12)= \lfloor \frac{t+12}{4} \rfloor$.}
		\label{choco823k4p12}
	\end{minipage}
\end{figure}

\begin{figure}[!htb]
	\centering
	\includegraphics[width=0.6\columnwidth,bb=0 0 361 119]{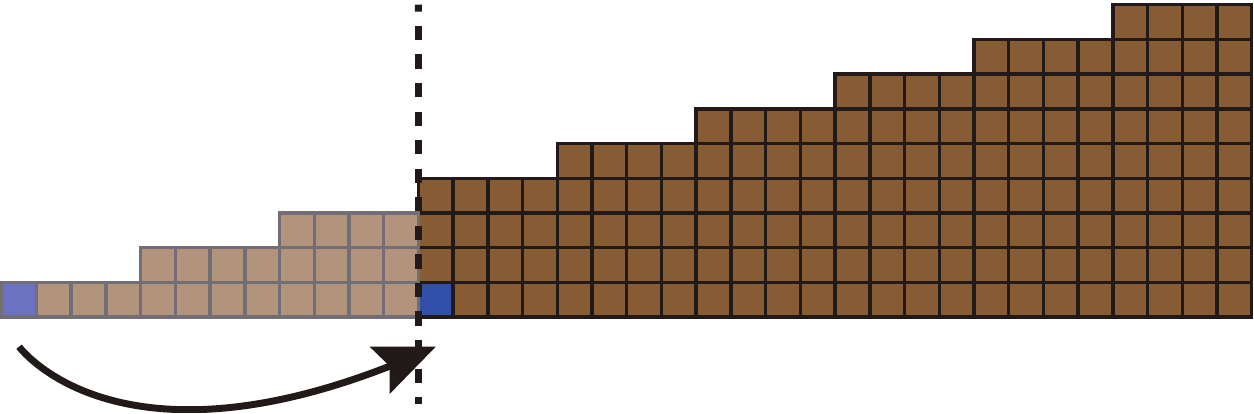}
	\caption{ }
	\label{choco823k4bb}
\end{figure}

\end{exam}

As in Example \ref{exampofnimsumchos} we can make the chocolate bars that have the Grundy number $G_{f_s}(\{y,z\}) = (y \oplus (z+s))-s$ from the chocolate bar whose Grundy number is $G_f(\{y,z\})=y \oplus z$. 

First, we study a sufficient condition.

\subsection{A Sufficient Condition for a Chocolate Bar to have the Grundy Number $(y \oplus (z+s))-s$ }
Let $h$ be the function that satisfies the condition $(a)$ in  Definition \ref{adefoffunch} and let $G_h(\{y,z\})$ be the Grundy number of $CB(h,y,z)$. The condition $(a)$ in Definition \ref{adefoffunch} is a necessary and sufficient condition for $CB(h,y,z)$ to have the Grundy number $G_h(\{y,z\})=y \oplus z$, and
we can use  all the lemmas and theorems in previous sections for the function $h$ and $CB(h,y,z)$.

\begin{defn}\label{defnofsandhs}
We define the function $h_s$ as the followings.\\
Let $s$ be a natural number such that 
	\begin{equation}\nonumber
		i \oplus s = i + s \text{ for } i = 0,1,2,...,h(s).
	\end{equation}
Let $h_s(z) = h(z+s)$ for any $z \in Z_{\geq0}$.
\end{defn}
We are going to show that the condition in Definition \ref{defnofsandhs} is a
necessary and sufficient condition for the chocolate bar $CB(h_s,y,z)$ to have the Grundy number
$G_{h_s}(\{y,z\})= (y \oplus (z+s))-s$.

\begin{Lem}\label{lemmaforshs}
Let $p \in Z_{\geq 0}$ and $s$ be a natural number. Then 
\begin{equation}\label{conditionfoforp}
i \oplus s = i+s  \text{ for } i = 0,1,...,p
\end{equation}
 if and only if 
there exist a natural number $u$ and a non-negative integer $v$ such that
\begin{equation}\label{conditionfoforp2}
s = u \times 2^{v} \text{ and } 2^{v} > p.
\end{equation}
\end{Lem}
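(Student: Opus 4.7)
The plan is to reformulate the condition $i \oplus s = i+s$ as the statement that the binary expansions of $i$ and $s$ share no common set bit (equivalently, that the binary addition of $i$ and $s$ involves no carries). Once this equivalence is in hand, both directions of the biconditional become elementary statements about which bit positions of $s$ are occupied.

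For the $(\Leftarrow)$ direction, I would assume $s = u \times 2^{v}$ with $2^{v} > p$. Being a multiple of $2^{v}$, the set bits of $s$ all lie at positions $\geq v$. On the other hand, any $i$ with $0 \leq i \leq p < 2^{v}$ has set bits only at positions $< v$. These two collections of bit positions are disjoint, so by the no-common-bit characterization $i \oplus s = i + s$ for every such $i$.

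For the $(\Rightarrow)$ direction, I would let $v$ denote the position of the lowest set bit of $s$ and write $s = u \times 2^{v}$ where $u$ is odd (in particular a natural number). The task reduces to showing $2^{v} > p$. I would argue by contradiction: if $2^{v} \leq p$, then $i = 2^{v}$ belongs to $\{0,1,\dots,p\}$, and since both $i$ and $s$ have bit $v$ set, the addition $i + s$ produces a carry at position $v$, giving $i \oplus s < i + s$. This contradicts the hypothesis applied to this particular $i$, so $2^{v} > p$ as desired.

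The only step that requires a modest amount of care is verifying the initial characterization of $i \oplus s = i+s$ as the absence of shared binary digits; this is obtained by comparing the bit-by-bit XOR expansion with the bit-by-bit sum with carries, and observing that any carry strictly increases the value of the sum relative to the XOR. Beyond that, the argument proceeds by a direct inspection of the binary expansion of $s$, and I do not anticipate any substantial obstacle.
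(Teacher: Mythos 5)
Your proof is correct and rests on the same elementary binary-expansion reasoning as the paper's: the "no shared set bit" characterization of $i \oplus s = i+s$ handles the $(\Leftarrow)$ direction identically, and for $(\Rightarrow)$ you take $v$ to be the lowest set bit of $s$ and test $i = 2^{v}$, whereas the paper takes $v = t+1$ with $t$ the highest set bit of $p$ and tests $i = p$ and $i = 2^{t}-1$; these yield possibly different but equally valid witnesses $v$. No gap; your single-witness contradiction is, if anything, slightly cleaner.
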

\begin{proof}
We suppose Relation (\ref{conditionfoforp}).
When $p > 0$, let $p=\sum\limits_{k = 0}^{t} {{p_k}} {2^k}$ with $p_t=1$ and $p_k \in \{0,1\}$. 
Let $s = \sum\limits_{k = 0}^{n} {{s_k}} {2^k}$ with $s_n=1$ and $s_k \in \{0,1\}$.
Since $p \oplus s = p+s$ and $p_t=1$, $s_t=0$.
Since $0 \leq \sum\limits_{k = 0}^{t-1} {} {2^k} < p$, 
$\sum\limits_{k = 0}^{t-1} {} {2^k} \oplus s = \sum\limits_{k = 0}^{t-1} {} {2^k} + s$. Therefore,  $s_i = 0$ for $i = 0,1,...,t-1$.
Let $u = \sum\limits_{k = t+1}^{n} {s_k} {2^{k-t-1}} $ and $v=t+1$, then we have Relation (\ref{conditionfoforp2}). When $p=0$, we let $v=0$ and $u=s$. Then we have  Relation (\ref{conditionfoforp2}). 

Next we suppose that there exist a natural number $u$ and a non-negative integer $v$ that satisfy  Relation (\ref{conditionfoforp2}). Then it is clear that we have Relation (\ref{conditionfoforp}).
\end{proof}

\begin{Lem}\label{comparnimsumtoarth}
Let $p \in Z_{\geq 0}$ and $s$ be a natural number such that  $i \oplus s = i+s$ for $i = 0,1,...,p$.
Let  $j \in Z_{\geq 0}$ such that $0 \leq j \leq p$. Then 
\begin{equation}\label{thissetequalthatset}
\text{the set } \{j \oplus i:i=0,1,2,...,s-1 \} \text{ is the same as the set }
\{0,1,2,...,s-1\}.
\end{equation}
\end{Lem}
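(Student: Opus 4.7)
My plan is to reduce the hypothesis to a clean structural statement about $s$ using the preceding Lemma \ref{lemmaforshs}, and then exploit the bit-structure of $s$ to show that XOR-ing with $j$ permutes $\{0,1,\ldots,s-1\}$.

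First, I would invoke Lemma \ref{lemmaforshs} applied with this same $p$: the hypothesis $i \oplus s = i + s$ for $i = 0,1,\ldots,p$ gives a natural number $u$ and a non-negative integer $v$ with $s = u \cdot 2^v$ and $2^v > p$. Since $j \leq p$, this immediately yields $j < 2^v$, so the binary expansion of $j$ has zeros in all positions $\geq v$.

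Next, I would describe an arbitrary element of $\{0,1,\ldots,s-1\}$. Any such $i$ can be uniquely written as $i = a \cdot 2^v + b$ with $0 \leq b < 2^v$ and $0 \leq a \leq u-1$ (from $i < u \cdot 2^v$). Because $j < 2^v$, the XOR $j \oplus i$ affects only the low $v$ bits, giving $j \oplus i = a \cdot 2^v + (j \oplus b)$. Since $j,b < 2^v$ forces $j \oplus b < 2^v$, I get
\begin{equation}
j \oplus i \;=\; a \cdot 2^v + (j \oplus b) \;<\; (a+1)\cdot 2^v \;\leq\; u \cdot 2^v \;=\; s,\nonumber
\end{equation}
so $j \oplus i \in \{0,1,\ldots,s-1\}$. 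This shows the inclusion $\{j \oplus i : i = 0,1,\ldots,s-1\} \subseteq \{0,1,\ldots,s-1\}$.

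Finally, since the map $i \mapsto j \oplus i$ is an involution (in particular, an injection) on $Z_{\geq 0}$, and the source set $\{0,1,\ldots,s-1\}$ has exactly $s$ elements, the image must be a subset of $\{0,1,\ldots,s-1\}$ of size $s$, hence equal to it. This establishes the claimed set equality in (\ref{thissetequalthatset}). The only real content is the bit-level observation that $j < 2^v$ prevents XOR from producing a carry into the block of bits that indexes which multiple of $2^v$ we sit in; once that is said, the rest is bookkeeping and I do not expect any genuine obstacle.
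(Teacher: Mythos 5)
Your proposal is correct and follows essentially the same route as the paper: invoke Lemma \ref{lemmaforshs} to write $s = u\cdot 2^v$ with $2^v > p$, decompose each $i < s$ as $i = a\cdot 2^v + b$ with $b < 2^v$, observe that XOR with $j < 2^v$ only alters the low block so $j \oplus i < s$, and finish by injectivity of $i \mapsto j \oplus i$. No gaps.
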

\begin{proof}
By Lemma \ref{lemmaforshs}, there exist a natural number $u$ and a non-negative integer $v$ such that
$s = u \times 2^{v}$ and $2^{v} > p$.

Let $j \in Z_{\geq 0 } $ such that $0 \leq j \leq p$.
Then we write $j$ in base 2, and we have 
$j = \sum\limits_{k = 0}^{v-1} {{j_k}} {2^k}$.
We prove that $0 \leq j \oplus i \leq s-1$ for $0 \leq i \leq s-1$. Let $i \in Z_{\geq 0 } $ such that $0 \leq i \leq s-1$.
Then there exist $u^{\prime} \in Z_{\geq 0}$ such that $u^{\prime} < u$ and $i = u^{\prime} \times 2^{v} + \sum\limits_{k = 0}^{v-1} {{i_k}} {2^k}$ for $i_k \in \{0,1\}$.
Therefore,
\begin{equation}\label{relationji}
0 \leq j \oplus i = u^{\prime} \times 2^{v} + \sum\limits_{k = 0}^{v-1} {{(j_k \oplus i_k)}} {2^k} < u \times 2^{v} = s.
\end{equation}
Since $j \oplus i \neq  j \oplus i^{\prime}$ for $0 \leq i, i^{\prime} \leq s-1$ such that $i \neq i^{\prime}$, the inequality in (\ref{relationji}) implies Relation (\ref{thissetequalthatset}).
\end{proof}

\begin{Lem}\label{lemmaformexs} Let $s$ be a natural number, $x \in Z_{\geq 0}$ and $x_k \in Z_{\geq 0}$ for $k=1,2,...,n$.
Suppose that $x,x_k \geq s$ for $k=1,2,...,n$. Then $\textit{mex}(\{x_k:k=1,2,...,n\} \cup \{0,1,2,...,s-1\}) = x$ if and only if 
$\textit{mex}(\{x_k-s:k=1,2,...,n\} ) = x-s$.
\end{Lem}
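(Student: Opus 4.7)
The plan is to apply the mex characterization of Lemma \ref{defofmex1} on both sides of the equivalence and then perform a straightforward translation by $s$.

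First I would unpack the left-hand side. By Lemma \ref{defofmex1} applied to the set $S = \{x_k : k = 1,\dots,n\} \cup \{0,1,\dots,s-1\}$, the equation $\operatorname{mex}(S) = x$ is equivalent to the conjunction of:
\begin{itemize}
\item[(L1)] $x \neq x_k$ for every $k$, and $x \notin \{0,1,\dots,s-1\}$;
\item[(L2)] for every $u \in Z_{\geq 0}$ with $u < x$, either $u \in \{0,\dots,s-1\}$ or $u = x_k$ for some $k$.
\end{itemize}
Since we assume $x \geq s$, the condition $x \notin \{0,\dots,s-1\}$ in (L1) is automatic, so (L1) collapses to $x \neq x_k$ for every $k$. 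Moreover (L2) is automatic for $u < s$, so it only has content when $s \leq u < x$, in which case it requires the existence of $k$ with $u = x_k$.

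Next I would unpack the right-hand side similarly. By Lemma \ref{defofmex1}, $\operatorname{mex}(\{x_k - s : k = 1,\dots,n\}) = x - s$ is equivalent to:
\begin{itemize}
\item[(R1)] $x - s \neq x_k - s$ for every $k$;
\item[(R2)] for every $u' \in Z_{\geq 0}$ with $u' < x - s$, there exists $k$ with $u' = x_k - s$.
\end{itemize}
Note that (R1) is meaningful precisely because the hypothesis $x, x_k \geq s$ guarantees that all these subtractions yield non-negative integers.

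Finally I would match the two pairs via the bijection $u \mapsto u' = u - s$ on the range $s \leq u < x$. Under this substitution, (L1) $\Leftrightarrow$ (R1) is immediate by subtracting $s$ from both sides, and (L2) restricted to its nontrivial range $s \leq u < x$ becomes exactly (R2). This gives the desired equivalence, completing the proof. There is no real obstacle here — the argument is a routine index shift — the only thing to be careful about is noting that the hypothesis $x \geq s$ makes the condition $x \notin \{0,\dots,s-1\}$ inside (L1) vacuous, so that the translation by $s$ is truly a bijection between the two mex characterizations.
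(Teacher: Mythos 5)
Your proof is correct and follows essentially the same route as the paper: both arguments apply Lemma \ref{defofmex1} to characterize each mex equation and then translate by $s$, with the observation that the block $\{0,1,\dots,s-1\}$ handles exactly the values $u<s$ while the hypothesis $x,x_k\geq s$ makes the shift $u\mapsto u-s$ a clean bijection on the remaining range. The only cosmetic difference is that you phrase it as a single equivalence of conditions whereas the paper proves the two implications separately.
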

\begin{proof}
Suppose that $\textit{mex}(\{x_k:k=1,2,...,n\} \cup \{0,1,2,...,s-1\}) = x$. By Lemma \ref{defofmex1}, $x \notin \{x_k:k=1,2,...,n\} \cup \{0,1,2,...,s-1\}$, and hence  $x-s \notin \{x_k-s:k=1,2,...,n\}$.
Let $u \in Z_{\geq 0}$ such that $u<x-s$. Then $u+s<x$, and Lemma \ref{defofmex1} implies $u+s \in \{x_k:k=1,2,...,n\} \cup \{0,1,2,...,s-1\}$.
Clearly $u+s=x_k$ for some natural number $k$, and hence $u=x_k-s$.
By Lemma \ref{defofmex1}, we have $\textit{mex}(\{x_k-s:k=1,2,...,n\} ) = x-s$. \\
Conversely we suppose that $\textit{mex}(\{x_k-s:k=1,2,...,n\} ) = x-s$. Then, Lemma \ref{defofmex1} implies $x-s \neq x_k -s$ for any $k = 0,1,...,n$, and hence $x \neq x_k$ for any $k = 0,1,...,n$. For any $v \in Z_{\geq 0}$ such that $x > v \geq s$, Lemma \ref{defofmex1} implies that there exists $u$ such that $v = u +s$ and $x-s > u \geq 0$. By Lemma \ref{defofmex1}, there exists $k$ such that $u = x_k -s$, and hence we have $v = x_k$. Therefore Lemma \ref{defofmex1} implies  $\textit{mex}(\{x_k:k=1,2,...,n\} \cup \{0,1,2,...,s-1\}) = x$.
\end{proof}

\begin{Lem}\label{lemmagrundyminuss}
Let $s$ be a natural number such that 
\begin{align}\label{conditionofs1}
i \oplus s = i + s \text{ for } i = 0,1,2,...,h(s).
\end{align}
Then, for any $y,z \in Z_{\geq 0}$ such that $y \leq h(z+s)$, we have
\begin{align}\label{grundyyzpluss}
&y \oplus (z+s) \nonumber \\
&= G_h(\{y,z+s\}) \nonumber\\
&= \textit{mex}(\{v\oplus (z+s):v<y \} \nonumber\\
& \cup \{0,1,2,...,s-1\} \cup \{\min(y,h(w)) \oplus w:s \leq w < z+s\}).
\end{align}
In particular $y \oplus (z+s) \geq s$.
\end{Lem}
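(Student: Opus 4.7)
The first equality $y \oplus (z+s) = G_h(\{y,z+s\})$ follows immediately from Theorem~\ref{theoregrundyforf}, since $h$ satisfies condition $(a)$ of Definition~\ref{adefoffunch}. My plan for the second equality is to compare the \textit{mex} inside the claim with the \textit{mex} coming from the Grundy definition, and then to establish the ``in particular'' inequality $y \oplus (z+s) \ge s$ separately; this inequality is the true new content of the lemma.

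By Definitions~\ref{defofmexgrundy} and~\ref{moveofchocoh}, $G_h(\{y,z+s\})$ is the \textit{mex} of $\{v \oplus (z+s) : v < y\} \cup \{\min(y,h(w)) \oplus w : w < z+s\}$. I would split the second piece at $w = s$. For $0 \le w < s$, monotonicity of $h$ gives $\min(y,h(w)) \le h(w) \le h(s)$, so Lemma~\ref{comparnimsumtoarth} (applied with $p = h(s)$ and $j = \min(y,h(w))$) yields $\min(y,h(w)) \oplus w \in \{0,1,\ldots,s-1\}$. Consequently the original move-Grundy set is contained in the set displayed inside the claim, and every integer strictly below $y \oplus (z+s)$ already lies in the smaller set. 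To conclude that the two \textit{mex}es agree it remains to show that $y \oplus (z+s)$ is absent from the larger set: it is not of the form $v \oplus (z+s)$ with $v<y$; by Lemma~\ref{agrundyequallemma} the set $\{\min(y,h(w))\oplus w : w<z+s\}$ equals $\{y \oplus w : w<z+s\}$, which cannot contain $y \oplus (z+s)$; and $y \oplus (z+s) \notin \{0,\ldots,s-1\}$ is exactly the final inequality.

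The hard part will be proving $y \oplus (z+s) \ge s$. Using Lemma~\ref{lemmaforshs} I would write $s = u \cdot 2^v$ with $2^v > h(s)$, and set $Y = \lfloor y/2^v \rfloor$, $Z = \lfloor (z+s)/2^v \rfloor$; since the low $v$ bits of $s$ vanish, $Z = \lfloor z/2^v\rfloor + u \ge u$, and the target reduces to $Y \oplus Z \ge u$. Condition $(a)$ of Definition~\ref{adefoffunch} with $i = v+j$ says that $\lfloor h(z')/2^{v+j-1}\rfloor$ depends only on $\lfloor z'/2^{v+j}\rfloor$; combining this with $\lfloor s/2^{v+j}\rfloor = \lfloor u/2^j\rfloor$, $h(s) < 2^v$, and monotonicity of $h$ forces $\lfloor h(z+s)/2^{v+j-1}\rfloor = 0$ whenever $\lfloor Z/2^j\rfloor = \lfloor u/2^j\rfloor$, and hence $Y < 2^{j-1}$. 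Let $k^{*}$ be the highest bit at which the binary expansions of $Z$ and $u$ differ; since $Z \ge u$, bit $k^{*}$ of $Z$ is $1$ and bit $k^{*}$ of $u$ is $0$. Taking $j = k^{*}+1$ yields $Y < 2^{k^{*}}$, so every bit of $Y$ at position $\ge k^{*}$ vanishes; therefore $Y \oplus Z$ matches $u$ above $k^{*}$ and has a $1$ at bit $k^{*}$ where $u$ has a $0$, and a top-down bit comparison gives $Y \oplus Z > u$. The degenerate case where no such $k^{*}$ exists forces $Z = u$ and $Y = 0$, so $Y \oplus Z = u$. The main obstacle is precisely this bit-tracking argument, which must fuse condition $(a)$ with the structural form $s = u\cdot 2^v$ to locate $k^{*}$ and control $Y$.
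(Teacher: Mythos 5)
Your proposal is correct, but it inverts the logical order of the paper's argument and replaces its key step with a different one. The paper first shows that the options with $w<s$ contribute \emph{exactly} the set $\{0,1,\ldots,s-1\}$: it rewrites $\min(y,h(w))$ as $\min(\min(y,h(s)),h(w))$, applies Lemma \ref{agrundyequallemma} to the truncated position to get $\{\min(y,h(s))\oplus w: w<s\}$, and then applies Lemma \ref{comparnimsumtoarth} with the single fixed value $j=\min(y,h(s))$ to identify this with $\{0,\ldots,s-1\}$; the inequality $y\oplus(z+s)\geq s$ then falls out for free from Lemma \ref{defofmex2}, since the mex of a set containing $\{0,\ldots,s-1\}$ is at least $s$. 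You instead prove only the containment $\{\min(y,h(w))\oplus w: w<s\}\subseteq\{0,\ldots,s-1\}$ (applying Lemma \ref{comparnimsumtoarth} pointwise with a $j$ that varies with $w$, which is legitimate for containment but would not by itself give equality of the two sets), and you compensate by proving $y\oplus(z+s)\geq s$ directly through the bit-tracking argument built on Lemma \ref{lemmaforshs} and condition $(a)$ of Definition \ref{adefoffunch}; only then do you conclude the displayed mex identity. I checked your bit argument ($s=u\cdot 2^v$ with $2^v>h(s)$, the reduction to $Y\oplus Z\geq u$, the use of condition $(a)$ at level $i=v+j$ to force $Y<2^{j-1}$, and the comparison at the top differing bit $k^{*}$) and it is sound, including the degenerate case $Z=u$. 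What your route buys is a self-contained, structural explanation of \emph{why} the inequality holds, independent of the mex bookkeeping; what the paper's route buys is brevity, since the equality $\{\ldots\}=\{0,\ldots,s-1\}$ does double duty, yielding both the displayed formula and the inequality in one stroke. If you keep your version, state explicitly that the inequality is proved before, and used in, the verification that $y\oplus(z+s)\notin\{0,\ldots,s-1\}$, so the reader does not suspect circularity.
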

\begin{proof}
By Theorem \ref{theoregrundyforf} and the definition of Grundy number,
\begin{align}
y \oplus (z+s) = G_h(\{y,z+s\}) \hspace{70mm}\nonumber\\
=\textit{mex}(\{G_h(\{v,z+s\}):v<y\} \cup \{G_h(\{ \min(y,h(w)),w\}):w < z+s\})\nonumber\\
=\textit{mex}(\{G_h(\{v,z+s\}):v<y\} \cup \{G_h(\{ \min(y,h(w)),w\}):w < s\}\nonumber\\
\cup \{G_h(\{ \min(y,h(w)),w\}):s \leq w < z+s\})\label{lastterm1}.
\end{align}
When $w < s$, we have $h(w) \leq h(s)$, and hence
\begin{align*}
\min(y,h(w)) = \min( \min(y,h(s)),h(w))
\end{align*}

Since $\min(y,h(s)) \in Z_{\geq 0}$ and $\min(y,h(s)) \leq h(s)$, 
Lemma \ref{agrundyequallemma}, Equation (\ref{conditionofs1}) and Lemma \ref{comparnimsumtoarth} imply
\begin{align*}
&\{G_h(\{ \min(y,h(w)),w\}):w < s\} \nonumber\\
& = \{G_h(\{ \min(\min(y,h(s)),h(w)),w\}):w < s\} \nonumber\\
& = \{\min(y,h(s)) \oplus w:w<s\} =\{0,1,2,...,s-1\}.
\end{align*}

 Hence, by Theorem \ref{theoregrundyforf},

(\ref{lastterm1})
\begin{align}
&=\textit{mex}(\{v\oplus (z+s):v<y \} \cup \{0,1,2,...,s-1\} \nonumber \\
&\cup \{ \min(y,h(w)) \oplus w:s \leq w < z+s \}).\label{minorchange11}
\end{align}
Equation (\ref{lastterm1}) and Equation (\ref{minorchange11}) imply Equation (\ref{grundyyzpluss}). Therefore, by Lemma \ref{defofmex2}, we have 
$y \oplus (z+s) \geq s$.
\end{proof}

\begin{Lem}\label{lemmagrundyminuss2}
	Let $s$ be a natural number such that 
	\begin{equation}\label{conditionofs2b}
		i \oplus s = i + s \text{ for } i = 0,1,2,...,h(s).
	\end{equation}
	For any $y,z \in Z_{\geq 0}$ such that $y \leq h(z+s)$, we have
	\begin{align}\label{conditionofstar12}
		(y \oplus (z+s))-s  = \textit{mex}(\{(v\oplus (z+s))-s:v<y \} \nonumber \\
		\cup \{( \min(y,h(w)) \oplus w)-s:s \leq w < z+s \}).
	\end{align}
\end{Lem}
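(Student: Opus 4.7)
The plan is to combine the mex-formula from Lemma \ref{lemmagrundyminuss} with the subtraction-by-$s$ reformulation provided by Lemma \ref{lemmaformexs}. Concretely, Lemma \ref{lemmagrundyminuss} already asserts
\[
y \oplus (z+s) \;=\; \textit{mex}\bigl(\{v\oplus (z+s):v<y\}\cup\{0,1,\ldots,s-1\}\cup\{\min(y,h(w))\oplus w:s\le w<z+s\}\bigr),
\]
and moreover $y\oplus(z+s)\ge s$. So to derive the claimed equation (\ref{conditionofstar12}), it suffices to check that every element
\[
v\oplus(z+s)\quad(v<y)\qquad\text{and}\qquad \min(y,h(w))\oplus w\quad(s\le w<z+s)
\]
appearing in the mex is itself at least $s$. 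Once this lower bound is established, Lemma \ref{lemmaformexs} (applied with $x=y\oplus(z+s)$ and with the above values playing the role of the $x_k$) lets us strip off the block $\{0,1,\ldots,s-1\}$ and subtract $s$ from both sides, which is exactly the identity we wish to prove.

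First I would handle the terms $v\oplus(z+s)$ with $v<y$. Since $v<y\le h(z+s)$, the hypothesis of Lemma \ref{lemmagrundyminuss} holds with $v$ in place of $y$, so that lemma immediately yields $v\oplus(z+s)\ge s$. Next I would handle the terms $\min(y,h(w))\oplus w$ with $s\le w<z+s$. Write $w=z'+s$ with $0\le z'<z$, so that $\min(y,h(w))=\min(y,h(z'+s))\le h(z'+s)$. Applying Lemma \ref{lemmagrundyminuss} again, this time with the pair $(\min(y,h(w)),z')$ (which satisfies the hypothesis $\min(y,h(w))\le h(z'+s)$), we get $\min(y,h(w))\oplus w = \min(y,h(w))\oplus(z'+s)\ge s$.

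With these bounds in hand, Lemma \ref{lemmaformexs} can be invoked: writing the mex in Lemma \ref{lemmagrundyminuss} as $\textit{mex}(\{x_k\}\cup\{0,\ldots,s-1\})=y\oplus(z+s)$, where the $x_k$ enumerate the two families above, the lemma gives
\[
\textit{mex}(\{x_k-s\})=(y\oplus(z+s))-s,
\]
which, when the $x_k$ are substituted back, is precisely (\ref{conditionofstar12}). The only real obstacle is verifying the lower bounds on the mex elements, but this is a routine reduction to Lemma \ref{lemmagrundyminuss}, so the whole proof is short once that lemma is available.
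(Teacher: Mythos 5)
Your proposal is correct and follows essentially the same route as the paper's own proof: invoke Lemma \ref{lemmagrundyminuss} for the mex formula, verify that each element $v\oplus(z+s)$ and $\min(y,h(w))\oplus w$ is at least $s$ by reapplying that same lemma to the pairs $(v,z)$ and $(\min(y,h(w)),z')$ with $w=z'+s$, and then strip off $\{0,1,\ldots,s-1\}$ and subtract $s$ via Lemma \ref{lemmaformexs}. No gaps.
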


\begin{proof}
	By Relation (\ref{conditionofs2b}) and Lemma \ref{lemmagrundyminuss},  we have
	
	\begin{align}
		& G_h(\{y,z+s\}) \nonumber \\
		&= y \oplus (z+s) \nonumber \\
		&=  \textit{mex}(\{v\oplus (z+s):v<y \} \cup \{0,1,2,...,s-1\} \nonumber \\
		& \hspace{4mm} \cup \{ \min(y,h(w)) \oplus w:s \leq w < z+s \})\nonumber \\
		&= y \oplus (z+s) \nonumber \\
		&=  \textit{mex}(\{v\oplus (z+s):v<y \} \nonumber \\
		& \hspace{4mm} \cup \{0,1,2,...,s-1\} \cup \{ \min(y,h(w^{\prime}+s)) \oplus (w^{\prime}+s):0 \leq w^{\prime} < z \})\label{grundynimussterm}
	\end{align}
	
	for any $y,z \in Z_{\geq 0}$ such that $y \leq h(z+s)$.
	
	Since $v < y \leq h(z+s)$, Lemma \ref{lemmagrundyminuss} implies that for $0 \leq v < y$ 
	\begin{align}\label{conditionof111}
		v\oplus (z+s)\geq s.
	\end{align}
	Since $ \min(y,h(w^{\prime}+s)) \leq h(w^{\prime}+s)$, Lemma \ref{lemmagrundyminuss} implies that for $0 \leq w^{\prime} < z$ 
	\begin{align}\label{conditionof112}
		\min(y,h(w^{\prime}+s)) \oplus (w^{\prime}+s) \geq s.
	\end{align}
Lemma \ref{lemmaformexs}, the inequality in (\ref{conditionof111}), the inequality in (\ref{conditionof112}) and align (\ref{grundynimussterm}) imply (\ref{conditionofstar12}). We have completed the proof.
\end{proof}

\begin{thm}\label{grundyminuss}
	Let $s$ be a natural number such that 
	\begin{align}\label{conditionofs3}
		i \oplus s = i + s \text{ for } i = 0,1,2,...,h(s)
	\end{align}
	and $h_s(z) = h(z+s)$ for any $z \in Z_{\geq0}$.
	Let $G_{h_s}(\{y,z\})$ be the Grundy number of $CB(h_s,y,z)$. Then 
	$G_{h_s}(\{y,z\})= (y \oplus (z+s))-s$ for any $y,z \in Z_{\geq0}$ such that $y \leq h_s(z)$. 
\end{thm}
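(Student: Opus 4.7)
The plan is to prove the theorem by strong induction on the pair $(z, y)$ under the lexicographic order, equivalently by induction on the game tree of $CB(h_s, y, z)$. The base case $y = z = 0$ is the bitter square itself: $G_{h_s}(\{0, 0\}) = 0$ matches $(0 \oplus s) - s = 0$.

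For the inductive step I would first unfold the Grundy number using Definition \ref{defofmexgrundy} and Definition \ref{moveofchocoh} applied to the function $h_s$:
\begin{align*}
G_{h_s}(\{y, z\}) = \textit{mex}\bigl(& \{G_{h_s}(\{v, z\}) : v < y\} \\
& \cup \{G_{h_s}(\{\min(y, h_s(w)), w\}) : w < z\}\bigr).
\end{align*}
Next I would verify that every option stays within the regime where the induction hypothesis applies, namely ``first coordinate $\leq h_s$ of second coordinate'': for $v < y$ we have $v < y \leq h_s(z)$, and for $w < z$ the first coordinate $\min(y, h_s(w))$ is at most $h_s(w)$ by construction. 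Hence the induction hypothesis gives $G_{h_s}(\{v, z\}) = (v \oplus (z + s)) - s$ and $G_{h_s}(\{\min(y, h_s(w)), w\}) = (\min(y, h_s(w)) \oplus (w + s)) - s$. Using $h_s(w) = h(w + s)$ and the substitution $w' = w + s$ rewrites the second family as $\{(\min(y, h(w')) \oplus w') - s : s \leq w' < z + s\}$.

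Putting these together, I obtain
\begin{align*}
G_{h_s}(\{y, z\}) = \textit{mex}\bigl(& \{(v \oplus (z + s)) - s : v < y\} \\
& \cup \{(\min(y, h(w')) \oplus w') - s : s \leq w' < z + s\}\bigr),
\end{align*}
and Lemma \ref{lemmagrundyminuss2} identifies this \textit{mex} with $(y \oplus (z + s)) - s$, which closes the induction.

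The main obstacle is not algebraic: Lemma \ref{lemmagrundyminuss2} has already absorbed the delicate content, which uses the hypothesis $i \oplus s = i + s$ for $i \leq h(s)$ to show that the block $\{0, 1, \ldots, s - 1\}$ appearing in the original Grundy spectrum of $CB(h, y, z + s)$ is precisely what disappears when one removes the first $s$ columns to form $CB(h_s, y, z)$. What remains here is purely the bookkeeping of translating options of the shifted bar back to options of the unshifted bar by the change of variables $w' = w + s$ and checking the induction stays in the permissible region.
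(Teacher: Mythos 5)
Your proposal is correct and follows essentially the same route as the paper: unfold the \textit{mex} over $move_{h_s}$, apply the induction hypothesis (after checking the options remain in the region $v \leq h_s(w)$), reindex the second family by $w' = w + s$, and invoke Lemma \ref{lemmagrundyminuss2} to identify the resulting \textit{mex} with $(y \oplus (z+s)) - s$. Your remark that the delicate content lives in Lemma \ref{lemmagrundyminuss2} matches the paper's structure exactly.
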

\begin{proof}
	Let $y,z \in Z_{\geq0}$ such that $y \leq h_s(z)$.
	We prove by mathematical induction, and we assume that $G_{h_s}(\{v,w\})= (v \oplus (w+s))-s$ for $v,w \in Z_{\geq 0}$ such that
	$v \leq y, w< z$ or $v < y, w \leq z$.
	\begin{align}
		&G_{h_s}(\{y,z\})=\textit{mex}(\{G_{h_s}(\{v,z\}):v<y\} \cup \{G_{h_s}(\{ \min(y,h_s(w)),w\}):w < z\})\nonumber \\
		&=\textit{mex}(\{(v \oplus (z+s))-s :v<y\} \cup \{ (\min(y,h(w+s)) \oplus (w+s))-s :w< z \})\nonumber\\
		&=\textit{mex}(\{(v \oplus (z+s))-s :v<y\} \cup \{ (\min(y,h(w+s)) \oplus (w+s))-s\nonumber \\
		& \hspace{90mm} :s \leq w+s < z+s\})\nonumber\\
		&=\textit{mex}(\{(v \oplus (z+s))-s :v<y\} \cup \{ (\min(y,h(w^{\prime})) \oplus w^{\prime})-s :s \leq w^{\prime} < z+s\})\label{gsgrundy1}.
	\end{align}
	By Lemma \ref{lemmagrundyminuss2}, (\ref{gsgrundy1}) = $(y \oplus (z+s))-s$, and hence we finish this proof.
\end{proof}

\subsection{A Necessary Condition for a Chocolate Bar to have the Grundy Number $(y \oplus (z+s))-s$.}
In this subsection, we study a necessary  condition for a chocolate bar  to have the Grundy number $(y \oplus (z+s))-s$.

\begin{defn}\label{defoffuncg}
	Let $s$ be a fixed natural number and $g$ be a function that satisfies the following three conditions:\\
	$(i)$  $g(t)\in Z_{\geq0}$ for $t \in Z_{\geq0}$.\\
	$(ii)$  $g$ is monotonically increasing.\\
	$(iii)$  The Grundy number of $CB(g,y,z)$ is $G_g(\{y,z\})= (y \oplus (z+s))-s$. 
\end{defn} 

We are going to show that there exists a function $h$ such that $g(z) = h(z+s)$ for any $z \in Z_{\geq0}$,
	\begin{align}\nonumber
		i \oplus s = i + s \text{ for } i = 0,1,2,...,h(s),
	\end{align}
and the Grundy number of $CB(h,y,z)$ is $G_h(\{y,z\})= (y \oplus z)$.

\begin{Lem}\label{conditionforg0}
	Let $s$ be a natural number and $g$ a function such that the conditions of Definition \ref{defoffuncg} are satisfied. Then we have 
	$i \oplus s = i + s$ for $i=0,1,2,...,g(0)$.
\end{Lem}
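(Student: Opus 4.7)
The plan is to compute $G_g(\{y,0\})$ in two different ways for $0\le y\le g(0)$ and then equate the two expressions.

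First I would invoke Definition \ref{moveofchocoh}: the set $move_g(\{y,0\})$ reduces to $\{\{v,0\}:v<y\}$, because there is no non-negative integer $w$ with $w<0$, so no horizontal cut is available when $z=0$. The game starting at $\{y,0\}$ is therefore a single Nim heap, and a trivial induction on $y$ gives
\begin{equation*}
G_g(\{y,0\})=\textit{mex}\{0,1,\ldots,y-1\}=y\quad\text{for every }y\in Z_{\ge 0}.
\end{equation*}

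Next I would apply condition $(iii)$ of Definition \ref{defoffuncg}, which for $y\le g(0)$ asserts $G_g(\{y,0\})=(y\oplus s)-s$. Equating the two expressions yields $y=(y\oplus s)-s$, i.e.\ $y\oplus s=y+s$, for $y=0,1,\ldots,g(0)$, which is the statement of the lemma.

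There is essentially no obstacle: with $z=0$ the chocolate game collapses to a single Nim heap whose Grundy value is just $y$, so matching this against the hypothesised formula $(y\oplus s)-s$ immediately forces the nim-addition/ordinary-addition identity on the required range. The only point worth double-checking along the way is that in this range the quantity $(y\oplus s)-s$ is a legitimate non-negative integer, but this follows a posteriori from the identity we are deducing (or, more directly, from the fact that a Grundy number is by definition a non-negative integer).
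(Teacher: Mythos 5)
Your proposal is correct and takes essentially the same route as the paper: the paper also observes that from $\{i,0\}$ the only moves are to $\{k,0\}$ with $k<i$, deduces $G_g(\{i,0\})=i$ by induction for $i=0,1,\ldots,g(0)$, and then equates this with $(i\oplus s)-s$ from Definition \ref{defoffuncg} to get $i\oplus s=i+s$. No gaps; nothing further to add.
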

\begin{proof}
	First, we prove that 
	\begin{align}\label{conditionGrundyg}
		G_g(\{i,0\})=i
	\end{align}
	for $i = 0,1,2,...,g(0)$ by mathematical induction. By the definition of Grundy number,
	$G_g(\{0,0\})=0$. We suppose that $G_g(\{k,0\})=k$ for $k = 0,1,2,...i-1$ and 
	$i \leq g(0)$. By the definition of Grundy number,
	$G_g(\{i,0\})=\textit{mex}(\{G_g(\{k,0\}):k=0,1,2,...,i-1\}$ $=\textit{mex}(\{0,1,2,...,i-1\})=i$.
	By the conditions of Definition \ref{defoffuncg}, we have $G_g(\{i,0\})= (i \oplus s)-s$, and hence align (\ref{conditionGrundyg}) implies 
	$(i \oplus s)-s = i$. Therefore, we have completed the proof. 
\end{proof}

\begin{thm}\label{theoremforreverse}
	Let $s$ be a natural number and $g$ a function such that the conditions of Definition \ref{defoffuncg} are satisfied. We define a function $g_{-s}$ by  $g_{-s}(z)=g(z-s)$ for $z \geq s$ and $g_{-s}(z)=g(0)$ for $0 \leq z < s$. 
	Let $G_{g_{-s}}(\{y,z\})$ be the Grundy number of 
	$CB(g_{-s},y,z)$. Then  $G_{g_{-s}}(\{y,z\})= y \oplus z$ for any $y,z \in Z_{\geq 0}$ such that $y \leq g_{-s}(z)$.
\end{thm}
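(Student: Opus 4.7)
The plan is to establish $G_{g_{-s}}(\{y,z\}) = y \oplus z$ by strong induction on $y+z$, splitting according to whether $z < s$ or $z \geq s$. In both regimes the strategy is the same: unwind the mex defining $G_{g_{-s}}(\{y,z\})$ using the induction hypothesis, then identify the resulting set with one whose mex we already know equals $y \oplus z$, the known value coming either from pure NIM (in the first regime) or from the hypothesis on $G_g$ together with Lemma \ref{lemmaformexs} (in the second).

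In the case $z < s$, the hypothesis $y \leq g_{-s}(z) = g(0)$ together with monotonicity gives $g_{-s}(w) = g(0) \geq y$ for every $w<z$, so $\min(y,g_{-s}(w)) = y$. Thus the moves from $\{y,z\}$ in $CB(g_{-s},y,z)$ are exactly $\{v,z\}$ with $v<y$ and $\{y,w\}$ with $w<z$; the induction hypothesis assigns them Grundy values $v \oplus z$ and $y \oplus w$, and the standard two-heap NIM mex computation yields $y \oplus z$. In the case $z \geq s$, set $z' = z-s$. Condition (iii) of Definition \ref{defoffuncg} applied to $y \leq g(z')$ gives $G_g(\{y,z'\}) = (y \oplus z) - s$. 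Expanding the right side via the mex definition and substituting the same formula in each reachable position of $CB(g,y,z')$ yields
\begin{equation*}
(y \oplus z) - s = \textit{mex}\bigl(\{(v \oplus z)-s : v<y\} \cup \{(\min(y,g(w'))\oplus(w'+s))-s : w'<z'\}\bigr).
\end{equation*}
Every term inside is a non-negative Grundy value, so each corresponding unshifted quantity is $\geq s$. Applying Lemma \ref{lemmaformexs} in reverse and reindexing $w = w'+s$ produces
\begin{equation*}
y \oplus z = \textit{mex}\bigl(T_1 \cup \{0,1,\ldots,s-1\} \cup T_2\bigr),
\end{equation*}
with $T_1 = \{v \oplus z : v<y\}$ and $T_2 = \{\min(y,g(w-s)) \oplus w : s \leq w < z\}$.

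On the other hand, expanding $G_{g_{-s}}(\{y,z\})$ directly through the mex definition and applying the induction hypothesis, together with $g_{-s}(w) = g(0)$ for $w<s$ and $g_{-s}(w) = g(w-s)$ for $w \geq s$, gives
\begin{equation*}
G_{g_{-s}}(\{y,z\}) = \textit{mex}\bigl(T_1 \cup T_1' \cup T_2\bigr),\qquad T_1' = \{\min(y,g(0)) \oplus w : w<s\}.
\end{equation*}
Since $\min(y,g(0)) \leq g(0)$ and Lemma \ref{conditionforg0} supplies $i \oplus s = i+s$ for $i \leq g(0)$, Lemma \ref{comparnimsumtoarth} yields $T_1' = \{0,1,\ldots,s-1\}$. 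Hence the two mex expressions coincide, closing the induction.

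The main obstacle will be the non-negativity bookkeeping needed to legitimately invoke Lemma \ref{lemmaformexs}, namely ensuring that $v \oplus z \geq s$ for all $v<y$ and $\min(y,g(w'))\oplus(w'+s) \geq s$ for all $w'<z'$; these reduce to the fact that the corresponding $G_g$-values are non-negative, but must be tracked carefully so that the ``add $s$ back'' step is valid. Once that is secured, the heart of the argument is the set identification $T_1' = \{0,1,\ldots,s-1\}$ via Lemma \ref{conditionforg0} and Lemma \ref{comparnimsumtoarth}, which is essentially a repackaging of the hypothesis of Theorem \ref{grundyminuss} rediscovered inside the new function $h := g_{-s}$.
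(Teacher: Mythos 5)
Your proposal is correct and follows essentially the same route as the paper: the $z<s$ regime is handled by the constancy of $g_{-s}$ there (the paper invokes Theorem \ref{theoregrundyforf}, you do the equivalent direct NIM mex computation), and the $z\geq s$ regime is closed by expanding both mex expressions, identifying $\{\min(y,g(0))\oplus w : w<s\}$ with $\{0,1,\ldots,s-1\}$ via Lemma \ref{conditionforg0} and Lemma \ref{comparnimsumtoarth}, and transferring the mex through Lemma \ref{lemmaformexs} exactly as the paper does. The non-negativity bookkeeping you flag as the main obstacle is handled identically in the paper (its inequalities (\ref{eqgrundy1}) and (\ref{eqgrundy2})), so there is no gap.
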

\begin{proof}
	\underline{Case $(1)$} By the definition of $g_{-s}$, we have  $g_{-s}(z)=g(0)$ for $z \leq s$, and hence the function 
	$g_{-s}$ is a constant function for $z \leq s$, and hence it satisfies the condition of Definition \ref{adefoffunch}.
	Therefore $G_{g_{-s}}(\{y,z\})= y \oplus z$ for any $y,z \in Z_{\geq 0}$ such that $y \leq g_{-s}(z)$ and $z \leq s$.\\
	\underline{Case $(2)$} Next we prove that  $G_{g_{-s}}(\{y,z+s\})= y \oplus (z+s)$ for  $y \leq g_{-s}(z+s)$.
	We prove by mathematical induction, and we assume that $G_{g_{-s}}(\{v,w\})= v \oplus w$ for $v,w \in Z_{\geq 0}$ such that
	$v \leq y, w< z+s$ or $v < y, w \leq z+s$. 
	
	By Lemma \ref{conditionforg0} we have $i \oplus s = i + s$ for $i = 0,1,2,...,g(0)$.
	Let $p=g(0)$ and $j= \min(y,g(0))$. Then we use Lemma \ref{comparnimsumtoarth}, and 
 we have Relation (\ref{equal0s}).
	\begin{align}\label{equal0s}
		\text{The set} \{ \min(y,g(0)) \oplus w:w < s\} \text{is the same as the set }
		\{0,1,2,...,s-1\}.
	\end{align}
	By Definition \ref{defoffuncg},
	\begin{align}
		&(y \oplus (z+s))-s=G_g(\{y,z\}) \nonumber \\
		&=\textit{mex}(\{G_g(\{v,z\}):v<y\} \cup \{G_g(\{ \min(y,g(w)),w\}):w < z\})\nonumber \\
		&=\textit{mex}(\{(v \oplus (z+s))-s:v<y\} \nonumber\\
		& \hspace{4mm} \cup \{ (\min(y,g_{-s}(w+s)) \oplus (w+s))-s :s \leq w+s < z+s\})\nonumber \\
		&=\textit{mex}(\{(v \oplus (z+s))-s:v<y\} \cup \{ (\min(y,g(w)) \oplus (w+s))-s :w< z\}).\label{gstoordinaryg}
	\end{align}
	\begin{align}
		&(v \oplus (z+s))-s = G_g(\{v,z\}) \geq 0 \text{ for } v<y \hspace{3cm}\nonumber\\
		&\text{ and }\hspace{8cm} \nonumber \\
		&(\min(y,g_{-s}(w+s)) \oplus (w+s))-s = G_g(\{ \min(y,g(w)),w\}) \geq 0\nonumber\\
		& \text{ for } s \leq w+s < z+s, \nonumber
	\end{align}
	and hence we have
	\begin{align}
		(v \oplus (z+s)) \geq s \text{ for } v<y  \hspace{3cm} \label{eqgrundy1}\\
		\text{ and }\hspace{5.2cm} \nonumber \\
		(\min(y,g_{-s}(w+s)) \oplus (w+s)) \geq s \text{ for } s \leq w+s < z+s. \label{eqgrundy2}
	\end{align}
By the hypothesis of mathematical induction, 
	\begin{align}
		&G_{g_{-s}}(\{y,z+s\}) \nonumber \\
		&=\textit{mex}(\{G_{g_{-s}}(\{v,z+s\}):v<y\}  \cup \{G_{g_{-s}}(\{ \min(y,g_{-s}(w)),w\}):w < s\}\nonumber\\
		& \hspace{4mm} \cup \{G_{g_{-s}}(\{ \min(y,g_{-s}(w)),w\}):s \leq w < z+s\})\nonumber \\
		&=\textit{mex}(\{v \oplus (z+s):v<y\} \cup \{ \min(y,g_{-s}(w)) \oplus w:w < s\} \nonumber \\
		& \hspace{4mm} \cup \{ \min(y,g_{-s}(w)) \oplus w :s \leq w < z+s\}).\label{fromgstog1}
	\end{align}

	Since $g_{-s}(w)=g(0)$ for $0 \leq w < s$, Relation (\ref{equal0s}) implies
	\begin{align*}
		\{ \min(y,g_{-s}(w)) \oplus w:w < s\} = \{ \min(y,g(0)) \oplus w:w < s\}=\{0,1,2,...,s-1\}.
	\end{align*}

Let $x = y \oplus (z+s)$ and $\{x_k, k = 1,2,3,...,n\} = \{v \oplus (z+s):v<y\} \cup \{ \min(y,g_{-s}(w)) \oplus w :s \leq w < z+s\}$. Then 
By align (\ref{gstoordinaryg}), the inequality in (\ref{eqgrundy1}), the inequality in (\ref{eqgrundy2}), align (\ref{fromgstog1}),  we use Lemma \ref{lemmaformexs} and we have
	$G_{g_{-s}}(\{y,z+s\})=y \oplus (z+s)$.
\end{proof}

Theorem \ref{grundyminuss} and Theorem \ref{theoremforreverse} prove the following proposition $(i)$ and $(ii)$ respectively.\\
$(i)$  Let $h$ be a function such that the Grundy number of the chocolate bar $CB(h,y,z)$ is $G_h(\{y,z\}) = y \oplus z$. Then the Grundy number of the chocolate bar $CB(h_s,y,z)$ is $G_{h_s}(\{y,z\}) = (y \oplus (z+s))-s$, where $s$ satisfies the condition (\ref{conditionofs3}) and $h_s(z)=h(z+s)$.\\
$(ii)$ Let $g$ be a function such that the Grundy number of the chocolate bar $CB(g,y,z)$ is $G_g(\{y,z\}) = (y \oplus (z+s))-s$. Then the Grundy number of the chocolate bar $CB(g_{-s},y,z)$ is $G_{g_{-s}}(\{y,z\}) = y \oplus z$, where $g_{-s}(z)=h(z-s)$. Note that $g=(g_{-s})_s$.\\
Therefore we have a necessary and sufficient condition for the chocolate bar $CB(h,y,z)$ to have the Grundy number $G_{h_s}(\{y,z\}) = (y \oplus (z+s))-s$.

Next an example of this condition is presented for the function $h(z)=\lfloor \frac{z}{2k}\rfloor$. As you see, this condition is quite simple for this function.
\begin{Corollary}\label{lemmaforgrundyfloor} Let $h(z)=\lfloor \frac{z}{2k}\rfloor$ for a fixed natural number $k$. Then 
	\begin{align}\label{condiofs}
		s = m2^{v} \text{ for } v,m \in Z_{\geq 0} \text{ such that } m = 0, 1, 2,...,2k-1
	\end{align}
	if and only if 
	the Grundy number of $CB(h_s,y,z)$ is $(y\oplus (z+s)) -s$, where
	$h_s(z) = \lfloor \frac{z+s}{2k}\rfloor$.
\end{Corollary}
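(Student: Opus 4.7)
The plan is to reduce the corollary to Theorem \ref{grundyminuss} and Lemma \ref{lemmaforshs} by translating the two sides of the stated equivalence into the shifting condition $i \oplus s = i + s$ for $0 \leq i \leq h(s)$. Since Lemma \ref{lemmaforfloorzbyk} already shows that $h(z)=\lfloor z/(2k)\rfloor$ satisfies Definition \ref{adefoffunch}, the only remaining content concerns the condition on $s$ from Definition \ref{defnofsandhs}, and the proof splits cleanly into two implications.

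For the forward direction, I would assume $s = m \cdot 2^v$ with $0 \leq m < 2k$. A one-line computation gives $h(s) = \lfloor m \cdot 2^v / (2k) \rfloor < 2^v$, because $m/(2k) < 1$ forces $m \cdot 2^v/(2k) < 2^v$. Lemma \ref{lemmaforshs} applied with $p = h(s)$ then yields $i \oplus s = i + s$ for $i = 0, 1, \dots, h(s)$, and Theorem \ref{grundyminuss} concludes that the Grundy number of $CB(h_s, y, z)$ equals $(y \oplus (z+s)) - s$.

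For the converse, I would evaluate the hypothesized formula on the simplest slice $z = 0$. For any $y$ with $0 \leq y \leq h_s(0) = h(s)$, the bar $CB(h_s, y, 0)$ is a single column of $y + 1$ squares, so its Grundy number is just $y$. Setting $y = (y \oplus s) - s$ then forces $y \oplus s = y + s$ for every $y = 0, 1, \dots, h(s)$. Lemma \ref{lemmaforshs} (applied with $p = h(s)$) produces $u \geq 1$ and $v \geq 0$ with $s = u \cdot 2^v$ and $2^v > h(s) = \lfloor u \cdot 2^v/(2k) \rfloor$. Using the fact that for an integer $n$, $\lfloor x \rfloor < n$ is equivalent to $x < n$, this inequality rearranges to $u < 2k$, so setting $m = u$ writes $s$ in the advertised form.

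The only content beyond citing prior lemmas is the short arithmetic equivalence between $2^v > \lfloor u \cdot 2^v/(2k)\rfloor$ and $u < 2k$, which is elementary but unavoidable; everything else is bookkeeping and appeals to Theorem \ref{grundyminuss} and Lemma \ref{lemmaforshs}. I do not expect any serious obstacle, since the chocolate bar $CB(h_s, y, 0)$ degenerates to ordinary Nim on a single column, bypassing the subtleties that made the earlier general proofs long.
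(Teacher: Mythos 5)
Your proposal is correct and follows essentially the same route as the paper: reduce everything to Lemma \ref{lemmaforshs} plus the elementary equivalence between $2^v > \lfloor u\cdot 2^v/(2k)\rfloor$ and $u < 2k$, and invoke Theorem \ref{grundyminuss} for the sufficiency. Your converse argument via the slice $z=0$ is just an explicit rederivation of Lemma \ref{conditionforg0} (with $g = h_s$, $g(0)=h(s)$), and is in fact somewhat more complete than the paper's one-line appeal to Theorem \ref{grundyminuss}, which strictly speaking only covers one direction of the stated equivalence.
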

\begin{proof} By  Lemma \ref{lemmaforfloorzbyk}, the function $h$ satisfies the conditions of Definition \ref{adefoffunch}.
	By Lemma \ref{lemmaforshs}, 
	\begin{align}
		i \oplus s = i+s  \text{ for } i = 0,1,...,h(s) \nonumber
	\end{align}
	if and only if 
	there exists $u \in Z_{\geq 0}$ such that 
	\begin{align}
		s=u \times 2^v \text{ and } h(s) = \lfloor \frac{s}{2k}\rfloor < 2^v \nonumber
	\end{align} 
	if and only if Condition (\ref{condiofs}) is valid.
	Therefore by Theorem \ref{grundyminuss} we finish the proof of this corollary.
\end{proof}

\end{document}